\numberwithin{equation}{section}
\newtheorem{thm}{Theorem}[section]
\newtheorem{defn}[thm]{Definition}
\newtheorem{lem}[thm]{Lemma}
\newtheorem{prop}[thm]{Proposition}
\newtheorem{cor}[thm]{Corollary}
\theoremstyle{remark}
\newtheorem{rmk}[thm]{Remark}
\newtheorem{claim}[thm]{Claim}
\DeclareMathOperator{\Bd}{Bound}
\DeclareMathOperator{\Gr}{Gr}
\newcommand{\Prj}{\text{Project}}
\newcommand{\mf}{\mathbf}
\newcommand{\reals}{\mathbb{R}}
\newcommand{\naturals}{\mathbb{N}}
\newcommand{\complexes}{\mathbb{C}}
\newcommand{\integers}{\mathbb{Z}}
\newcommand{\bdot}[2]{
\draw [black, fill = black] (#1, #2) circle [radius = 0.1];
}
\newcommand{\wdot}[2]{
\draw [black, fill = white] (#1, #2) circle [radius = 0.1];
}
\newcommand{\sdot}[2]{
\draw [black, fill=black] (#1, #2) circle [radius = 0.05]
}
\newcommand{\edge}[3]{
\draw (#1,#2) -- (#1, #2 + #3);
\bdot{#1}{#2}
\wdot{#1}{#2 + #3}
}
\newcommand{\crsg}[2]{
\begin{scope}[xshift = #1 cm, yshift = #2 cm]
\draw [rounded corners] (0,0) -- (0.2,0) -- (0.8,1)-- (1,1);
\draw [rounded corners] (0,1) -- (0.2,1) -- (0.8,0) -- (1,0);\
\end{scope}
}
\newcommand{\btm}[2]{
\begin{scope}[xshift = #1 cm, yshift = #2 cm]
\draw (0,0) -- (1,0);
\end{scope}
}
\newcommand{\tp}[2]{
\begin{scope}[xshift = #1 cm, yshift = #2 cm]
\draw (0,1) -- (1,1);
\end{scope}
}
\newcommand{\brd}[2]{
\begin{scope}[xshift = #1 cm, yshift = #2 cm]
\draw[dashed] (0.2,0) -- (0.8,1);
\draw[dashed] (0.2,1) -- (0.8,0);
\draw (0,0) -- (1,0);
\draw (0,1) -- (1,1);
\end{scope}
}
\newcommand{\mbrd}[2]{
\begin{scope}[xshift = #1 cm, yshift = #2 cm, scale = 0.5]
\draw[dashed] (0.2,0) -- (0.8,1);
\draw[dashed] (0.2,1) -- (0.8,0);
\draw (0,0) -- (1,0);
\draw (0,1) -- (1,1);
\end{scope}
}
\newcommand{\dbrd}[2]{
\begin{scope}[xshift = #1 cm, yshift = #2 cm, scale = 0.5]
\draw[dashed] (0.2,0) -- (0.8,1);
\draw[dashed] (0.2,1) -- (0.8,0);
\end{scope}
}
\newcommand{\dmr}[2]{
\begin{scope}[xshift = #1 cm, yshift = #2 cm]
\draw (0,0) -- (1,0);
\draw (0,1) -- (1,1);
\edge{0.5}{0}{1};
\end{scope}
}
\newcommand{\hdmr}[2]{
\begin{scope}[xshift = #1 cm, yshift = #2 cm]
\draw (0.5,0) -- (1,0);
\draw (0.5,1) -- (1,1);
\edge{0.5}{0}{1};
\end{scope}
}
\newcommand{\tdmr}[2]{
\begin{scope}[xshift = #1 cm, yshift = #2 cm]
\draw (0.5,0) -- (1,0);
\draw (0,1) -- (1,1);
\edge{0.5}{0}{1};
\end{scope}
}
\newcommand{\bdmr}[2]{
\begin{scope}[xshift = #1 cm, yshift = #2 cm]
\draw (0,0) -- (1,0);
\draw (0.5,1) -- (1,1);
\edge{0.5}{0}{1};
\end{scope}
}
\newcommand{\tcrsg}[2]{
\begin{scope}[xshift = #1 cm, yshift = #2 cm]
\draw [rounded corners] (0,1) -- (0.2,1) -- (0.8,0) -- (1,0);
\end{scope}
}
\newcommand{\bcrsg}[2]{
\begin{scope}[xshift = #1 cm, yshift = #2 cm]
\draw [rounded corners] (0,0) -- (0.2,0) -- (0.8,1)-- (1,1);
\end{scope}
}
\author{Rachel Karpman}
\address{
Department of Mathematics,
University of Michigan, Ann Arbor,
530 Church St.,
Ann Arbor, MI 48109-1043
USA}
\email{rkarpman@umich.edu}
\title[Bridge Graphs and Deodhar Parametrizations]{Bridge Graphs and Deodhar Parametrizations for Positroid Varieties}
\keywords{positroids varieties, plabic graphs, bridge graphs, bounded affine permutations, Deodhar parametrizations, positive distinguished subexpressions}
\begin{document}

\begin{abstract}
A \emph{parametrization} of a positroid variety $\Pi$ of dimension $d$ is a regular map \begin{math}(\complexes^{\times})^{d} \rightarrow \Pi\end{math} which is birational onto a dense subset of $\Pi$.  There are several remarkable combinatorial constructions which yield parametrizations of positroid varieties.  We investigate the relationship between two families of such parametrizations, and prove they are essentially the same.  Our first family is defined in terms of Postnikov's \emph{boundary measurement map}, and the domain of each parametrization is the space of edge weights of a planar network.  We focus on a special class of planar networks called \emph{bridge graphs}, which have applications to particle physics.  
Our second family arises from Marsh and Rietsch's parametrizations of Deodhar components of the flag variety, which are indexed by certain subexpressions of reduced words.  Projecting to the Grassmannian gives a family of parametrizations for each positroid variety.  
We show that each Deodhar parametrization for a positroid variety corresponds to a bridge graph, while each parametrization from a bridge graph agrees with some projected Deodhar parametrization. 
\end{abstract}
\thanks{This research was supported by NSF grant number DGE 1256260.}

\maketitle{}
\tableofcontents{}

\section{Introduction}
\label{intro}

Lusztig defined the \emph{totally nonnegative part} of an abstract flag manifold $G/P$ and conjectured that it was made up of topological cells, a conjecture proved by Rietsch in the late 1990's \citep{Lus94, Lus98, Rie99}.  More than a decade later, Postnikov introduced the \emph{positroid stratification} of the totally nonnegative Grassmannian $\Gr_{\geq 0}(k,n)$, and showed that this stratification was a special case of Lusztig's \citep{Pos06}.  While Lusztig's approach relied on the machinery of canonical bases, Postnikov's was more elementary.  Each \emph{positroid cell} in Postnikov's stratification was defined as the locus in $\Gr_{\geq 0}(k,n)$ where certain Pl\"{u}cker coordinates vanish.  

The positroid stratification of $\Gr_{\geq 0}(k,n)$ extends to a stratification of the complex Grassmannian $\Gr(k,n)$ of $k$-planes in $n$-space.   That is, we can decompose $\Gr(k,n)$ into \emph{positroid varieties} $\Pi$ which are the Zariski closures of Postnikov's totally nonnegative cells.  Remarkably, these positroid varieties are the images of \emph{Richardson varieties} in $\mathcal{F}\ell(n)$ under the natural projection 
\begin{displaymath} \pi_k:\mathcal{F}\ell(n) \rightarrow \Gr(k,n).\end{displaymath}
Moreover, for each positroid variety $\Pi$, there is a family of Richardson varieties which project birationally to $\Pi$.

The stratification of $\Gr(k,n)$ by projected Richardson varieties was first studied by Lusztig \citep{Lus98}.  Brown, Goodearl and Yakimov investigated the same stratification from the viewpoint of Poisson geometry \citep{BGY06}.  Finally, Knutson, Lam and Speyer showed that Lusztig's strata were in fact the Zariski closures of Postnikov's totally nonnegative cells \citep{KLS13}.  

Postnikov defined a family of maps onto each positroid cell in $\Gr_{\geq 0}(k,n)$.  The domain of each map is the space of positive real edge weights of some weighted planar network, and there is a class of such networks for each positroid cell \citep{Pos06}.  Let $G$ be a planar network corresponding to a positroid cell 
\begin{math}\mathring{\Pi}_{\geq 0}\end{math} 
of dimension $d$ in $\Gr_{\geq 0}(k,n)$.  Specializing all but an appropriately chosen set of $d$ edge weights to $1$ yields a homeomorphism 
\begin{math}(\reals^+)^d \rightarrow \mathring{\Pi}_{\geq 0}\end{math} 
which we call a \emph{parametrization} of 
\begin{math}\mathring{\Pi}_{\geq 0}.\end{math} If we let the edge weights range over $\complexes^{\times}$ instead of $\reals^+$ we obtain a well-defined homeomorphism onto a dense subset of the positroid variety $\Pi$ in $\Gr(k,n)$ corresponding to the totally nonnegative cell 
\begin{math}\mathring{\Pi}_{\geq 0}\end{math}
\citep{MS14}.  We call these maps \emph{parametrizations} also.

In this paper, we investigate a particular class of network parametrizations, which arise from \emph{bridge graphs}.  Bridge graphs are constructed by an inductive process, and the definition of the corresponding parametrization is particularly straightforward.  In addition, bridge graphs have proven to be useful tool in particle physics \citep{ABCGPT14}.

Another method for parametrizing positroid varieties arises from the Deodhar decompositions of the flag variety $\mathcal{F}\ell(n)$, defined by Deodhar in \citep{Deo85}.  Each Deodhar decomposition of $\mathcal{F}\ell(n)$ refines the \emph{Richardson decomposition}.  Richardson varieties are indexed by pairs of permutations $u,w$, where $u \leq w$ in the Bruhat order on the symmetric group $S_n$, or equivalently by intervals $[u,w]$ in Bruhat order.  To define a Deodhar decomposition of $\mathcal{F}\ell(n)$, we first choose a reduced word $\mf{w}$ for each element $w$ of $S_n$.   For each of our chosen reduced words $\mf{w}$, and each $u \in S_n$ with $u \leq w$ in Bruhat order, we can express the (open) Richardson variety \begin{math} \mathring{X}_u^w\end{math} indexed by $[u,w]$ as a disjoint union of subsets called \emph{Deodhar components}.  The Deodhar components of  \begin{math} \mathring{X}_u^w\end{math} are indexed by \emph{distinguished subexpressions} for $u$ in $\mf{w}$; that is, by subwords for $u$ in $\mf{w}$ which satisfy a technical condition.
 
Let 
\begin{math}\Pi \subset \Gr(k,n)
\end{math}
be a positroid variety, and fix a Deodhar decomposition of $\mathcal{F}\ell(n)$.  We have a family of Deodhar components 
\begin{math} \mathcal{D} \subset \mathcal{F}\ell(n)\end{math}
such that the natural projection from $\mathcal{F}\ell(n)$ to $\Gr(k,n)$ maps each $\mathcal{D}$ isomorphically to a dense subset of $\Pi$.   These are precisely the top-dimensional Deodhar components of the Richardson varieties $\mathring{X}_u^w$ which project birationally to $\Pi$.  For each $\mathring{X}_u^w$, the desired component is indexed by a special choice of subexpression for $u$ in $\mf{w}$ called a \emph{positive distinguished subexpression}, or PDS.  Marsh and Rietsch defined explicit matrix parametrizations for each Deodhar component of $\mathcal{F}\ell(n)$ \citep{MR04}. Composing these parametrizations with the projection to $\Gr(k,n)$, we have a family of explicit parametrizations for the positroid variety $\Pi$, which we call \emph{projected Deodhar parametrizations} or simply \emph{Deodhar parametrizations} \citep{TW13}.  

We will show that these two ways of parametrizing positroid varieties--via bridge graphs, and via projected Deodhar parametrizations--are essentially the same.  This relationship was first conjectured by Thomas Lam \citep{Lam13B}.  Our main result is the following.

\begin{thm} \label{main} Let $\Pi$ be a positroid variety in $\Gr(k,n)$.  For each Deodhar parametrization of $\Pi$, there is a bridge graph which yields the same parametrization.  
Conversely, any bridge graph parametrization of $\Pi$ agrees with some Deodhar parametrization.
\end{thm}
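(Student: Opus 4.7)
The plan is to exhibit a bijection between the combinatorial data indexing Deodhar parametrizations of $\Pi$ -- pairs $(\mf{w},\mf{u})$ consisting of a reduced word $\mf{w}$ for some $w$ and a PDS $\mf{u}$ for some $u \leq w$ with $\pi_k(\mathring{X}_u^w)$ dense in $\Pi$ -- and bridge graphs for $\Pi$, and then to verify that corresponding parametrizations agree as regular maps $(\complexes^\times)^d \to \Pi$. Both constructions are inductive: a bridge graph is assembled one bridge (together with occasional lollipops) at a time, while the Marsh--Rietsch factorization writes a matrix representative of a point of the Deodhar component as a product of simple factors indexed by the letters of $\mf{w}$. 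I would set up matched inductive frameworks and prove both halves of the theorem by simultaneous induction on $\ell(w)$, reducing to the analysis of how a single bridge is appended.

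For the combinatorial translation, I would process the letters of $\mf{w} = s_{i_1}\cdots s_{i_\ell}$ in order and, at step $j$, attach a bridge between strands $i_j$ and $i_j+1$: a crossing bridge carrying a new parameter $t_j$ if position $j$ lies in $\mf{u}$, and a plain (uncrossed) bridge otherwise. The positivity condition on a PDS should correspond exactly to the requirement that no crossing be wasted, i.e.\ that the resulting plabic graph be reduced with trip permutation equal to the bounded affine permutation of $\Pi$. Conversely, any bridge graph is built by precisely such an inductive procedure, and recording which bridges are crossings yields a subexpression in the associated reduced word; a separate combinatorial argument, based on the defining inequalities of a PDS, then shows this subexpression is always positive distinguished.

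For the equality of parametrizations, I would argue inductively. Appending one bridge at step $j$ should correspond to right-multiplying a matrix representative by the Marsh--Rietsch factor $y_{i_j}(t_j)\dot{s}_{i_j}$ when $j \in \mf{u}$, and by $\dot{s}_{i_j}$ otherwise. On the bridge-graph side, attaching a bridge has an explicit effect on the boundary measurement matrix, namely an elementary column operation (adding $t_j$ times one column to an adjacent column, followed by a swap), and this is exactly the column operation induced on the projected flag by the corresponding Marsh--Rietsch factor. With an empty bridge graph (a trivial configuration of lollipops) as the base case, induction on $\ell(w)$ then delivers equality of the full parametrizations.

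The main obstacle I anticipate is the inductive step in the parametrization comparison, specifically reconciling the conventions of Marsh and Rietsch's factorization (which lives intrinsically in $\mathcal{F}\ell(n)$) with Postnikov's boundary measurement map (intrinsic to $\Gr(k,n)$). To handle this cleanly I expect to fix a distinguished Pl\"{u}cker chart -- for instance, the lexicographically minimal basis associated to $u$ -- and verify that in this chart both parametrizations write a point of $\Pi$ as the same explicit polynomial in the edge weights. Equally delicate will be matching the lollipop structure at the ends of the bridge graph with the initial standard flag used by Marsh--Rietsch, so that the recursion begins with the correct initial data; this should come down to tracking how fixed points of $u$ (and the boundary positions of $\Pi$) translate into boundary sources and sinks of the bridge graph.
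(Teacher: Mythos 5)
Your proposal has the right general shape (an inductive, factor-by-factor comparison of the Marsh--Rietsch product with bridge additions, which is indeed how the paper proceeds), but two of its core steps are wrong or missing. First, the dictionary between letters and bridges is inverted and misplaced. For a PDS $\mf{u}\preceq\mf{w}$ the Marsh--Rietsch factors are $x_{i_j}(t_j)$ for letters \emph{not} in $\mf{u}$ and $\dot{s}_{i_j}^{-1}$ for letters in $\mf{u}$; the factor $y_{i_j}(t_j)\dot{s}_{i_j}$ you propose never occurs for a positive distinguished subexpression. Consequently the bridges of the graph correspond to the letters \emph{outside} $\mf{u}$ (consistent with $d=\ell(w)-\ell(u)$), not to the letters of $\mf{u}$ as you assert. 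Moreover a bridge is not attached ``between strands $i_j$ and $i_j+1$'': after conjugating the $\dot{s}$-factors to the left, the $j$-th parameter enters as $x_{(a,b)}(\pm t)$ with $a=u_{(j-1)}(i_j)$, $b=u_{(j-1)}(i_j+1)$, so the bridge joins the boundary vertices $a,b$ (the right endpoints of the two wires), the columns involved need not be adjacent, there is no swap, and a sign $(-1)^{|u([k])\cap[a+1,b-1]|}$ must be computed to match the bridge-graph convention. Your ``adjacent column operation plus swap'' description would not reproduce the boundary measurement parametrization.

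Second, the real difficulty is not addressed: why is the resulting sequence of bridges $(a_1,b_1),\dots,(a_d,b_d)$ actually addable to a plabic graph at each stage, i.e.\ why is every boundary vertex strictly between $a_r$ and $b_r$ a lollipop of the correct color and why does the inversion condition $f_{r-1}(a_r)>f_{r-1}(b_r)$ hold? Positivity of the subexpression does not visibly give this; the paper must first replace $[u,w]_k$ by a representative with $u$ anti-Grassmannian and then run a delicate wire-crossing (planarity) induction on bridge diagrams. Nothing in your sketch supplies this, and without it the forward direction fails. Similarly, in the converse direction ``recording which bridges are crossings'' does not parse: a bridge graph contains only bridges, and one must \emph{construct} a reduced word $\mf{w}$ and the wiring diagram of $u$ interleaved compatibly with the given bridge sequence (the paper does this by induction, adding lollipops one at a time), which your outline does not do. Finally, note that the correspondence cannot be set up as a bijection: many Deodhar parametrizations (related by the legal moves of the paper's last section) yield the same bridge graph, so the statement to prove is only ``for each Deodhar parametrization there is a bridge graph, and conversely each bridge graph parametrization agrees with some Deodhar parametrization.''
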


\begin{figure}[ht]
\centering
\begin{tikzpicture}[scale = 0.7]
\draw (4,0) -- (4,5);
\draw (4,1) -- (0,1);
\draw (4,2) -- (2,2);
\draw (4,3) -- (1,3);
\draw (4,4) -- (0,4);
\edge{0}{1}{3}; \node[left] at (0,2.5) {$t_1$};
\edge{1}{1}{2}; \node[left] at (1,2) {$t_2$};
\edge{2}{2}{1}; \node[left] at (2,2.5) {$t_3$};
\edge{3}{3}{1}; \node[left] at (3,3.5) {$t_4$};
\wdot{0.5}{1};
\wdot{4}{1};
\wdot{4}{2};
\wdot{4}{3};
\bdot{4}{4};
\bdot{1.5}{4};
\bdot{1.5}{3};
\foreach \x in {1,...,4}
	{\pgfmathtruncatemacro{\label}{5 - \x}
	 \node [right] at (4.1, \x) {\label};} 
\end{tikzpicture}
\caption{A bridge network.  All unlabeled edges have weight $1$.}
\label{firstfig}
\end{figure}
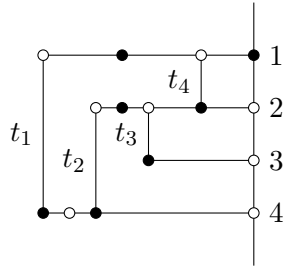
	
To convey the flavor of this result, we briefly sketch an example; the details will appear later.  Take $k=2$ and $n=4$.  Let $u = 2134$ and $w=4321$.  The Richardson variety $\mathring{X}_u^w$ projects birationally to the positroid \begin{math}\Pi_{\langle u,w \rangle_2}\end{math}.  The bridge graph in figure \ref{firstfig} yields a parametrization for this positroid variety, given by
\begin{equation}(t_1,t_2,t_3,t_4) \mapsto  \begin{bmatrix}
1 & t_4 & 0 & -t_1\\
0 & 1 & t_3 & t_2
\end{bmatrix}
\end{equation}

We claim that we can obtain the same map from some Deodhar parametrization.  Indeed, fix the reduced word 
\begin{math}\mf{w} = s_1s_2s_3s_2s_1s_2\end{math}
for $w$. The positive distinguished subexpression $\mf{u}$ for $u$ in $\mf{w}$ comprises the $s_3$ in position $3$ from the left, and the $s_1$ in position $5$, so we have the projected Deodhar parametrization
\begin{equation}(t_1,t_2,t_3,t_4) \mapsto \begin{bmatrix}
1 & 0 & 0 & 0\\
0 & 1 & 0 & 0
\end{bmatrix}
x_2(t_1)\dot{s_1}^{-1}x_2(t_2)\dot{s_3}^{-1}x_2(t_3)x_1(t_4)=\begin{bmatrix}
0 & 1 & t_3 & t_2\\
-1 & -t_4 & 0 & t_1
\end{bmatrix}\end{equation}
Note that 
\begin{equation}\begin{bmatrix}
0 & 1 & t_3 & t_4\\
-1 & -t_4 & 0 & t_1
\end{bmatrix}=
\begin{bmatrix} 0 & 1 \\
-1 & 0\\
\end{bmatrix}
\begin{bmatrix}
1 & t_4 & 0 & -t_1\\
0 & 1 & t_3 & t_2
\end{bmatrix}
\end{equation}

Hence, the two parametrizations send the point 
\begin{math} (t_1,t_2,t_3,t_4)\end{math}
to matrices which have the same row space, and hence represent the same point in the $\Gr(2,4)$

This work builds on a number of earlier results.  Postnikov's Le-diagrams, which index positroid varieties, provided one of the earliest links between planar networks and PDS's. A Le diagram is a Young diagram filled with $0$'s and $+$'s according to certain rules.  There is an beautiful bijection between Le-diagrams and PDS's of \emph{Grassmannian} permutations, permutations with a single descent at position $k$.  Moreover, Postnikov constructed a planar network from each Le-diagram, which yields a parametrization of the corresponding positroid variety \citep{Pos06}.  

Talaska and Williams explored the link between distinguished subexpressions and network parametrizations further in \citep{TW13}.  They considered Deodhar components of $\mathcal{F}\ell(n)$ indexed by all distinguished subexpressions of Grassmannian permutations, not just PDS's.  These Deodhar components project isomorphically to subsets of $\Gr(k,n)$, and the projections give a decomposition of $\Gr(k,n)$.  Marsh and Rietsch's work gave a unique parametrization for each component \citep{MR04}.  Talaska and Williams proved that each of these parametrizations arises from a network, which they constructed explicitly. For components indexed by PDS's, they recovered the planar networks corresponding to Postnikov's Le diagrams; for the remaining components, their networks were not all planar.  

In this paper, we relax the requirement that our Deodhar components be indexed by subexpressions of Grassmannian permutations.  Instead, we restrict our attention to Deodhar components which correspond to PDS's, and which project isomorphically to a subset of $\Gr(k,n)$.   These are exactly the Deodhar components which map to  dense subsets of positroid varieties.  We have a family of such components for each positroid variety, which in turn gives a family of parametrizations.  We show that each of these parametrizations arises from a planar network, which we construct explicitly.  The Le-diagrams defined by Postnikov, and recovered by Talaska and Williams, are a special case of this result.

The organization of the paper is as follows.  In Section \ref{back}, we provide the necessary background on combinatorial objects that index positroid varieties, the positroid decomposition of the Grassmannian, bridge graphs, distinguished subexpressions, and parametrizations of Deodhar components. Subsection \ref{diagrams} introduces \emph{bridge diagrams}, which are distinct from bridge graphs, and which play an extensive role in the proof of Theorem \ref{main}.  In Section \ref{mainproof}, we prove that every projected Deodhar parametrization agrees with some bridge graph parametrization, and in Section \ref{converse} we prove the converse.  The correspondence between Deodhar parametrizations and bridge graphs is not a bijection; rather, we have a family of Deodhar parametrizations for each bridge graph.  In Section \ref{isotopy} we define an equivalence relation among Deodhar parametrizations, such that each equivalence class corresponds to a unique bridge graph.

\subsection*{Acknowledgements} I would like to thank Thomas Lam for introducing me to his conjecture about the relationship between bridge graphs and Deodhar parametrizations, and for many helpful conversations.  My thanks also to Greg Muller and David E Speyer, for sharing their manuscript \citep{MS14}.  Finally, I am grateful  to Greg Muller, David E Speyer and Timothy M. Olson for productive discussions, and to Jake Levinson for useful suggestions in preparing this manuscript.

\section{Background}
\label{back}

\subsection{Notation for partitions and permutations}

Let $S_n$ denote the symmetric group in $n$ letters, with simple generators
\begin{math} s_1,\ldots,s_{n-1},\end{math}
and let $\ell$ denote the standard length function on $S_n$.  Write $[a]$ for the set 
\begin{math}\{1,2,\ldots,a\} \subseteq \naturals,\end{math}
and let $[a,b]$ denote the set 
\begin{math}\{a,a+1,\ldots,b\}\end{math}
for $a \leq b$.  For $a > b$, we set 
\begin{math}[a,b] = \emptyset\end{math}.
 For $k \leq n$, write 
 \begin{math} S_k \times S_{n-k}\end{math}
  for the Young subgroup of $S_n$ which fixes the sets $[k]$ and $[k+1,n]$.  A permutation is \emph{Grassmannian} of type $(k,n)$ if it is of minimal length in its left coset of 
  \begin{math} S_k \times S_{n-k},\end{math} and \emph{anti-Grassmannian} if it is of maximal length.  Alternatively, a permutation is Grassmannian if it is increasing on the sets $[k]$ and $[k+1,n]$, and anti-Grassmannian if it is decreasing on $[k]$ and $[k+1,n]$.

For \begin{math} u,w \in S_n\end{math}, we write $u \leq w$ to denote a relation in the (strong) Bruhat order. A factorization 
\begin{math} u=vw \in S_n\end{math}
 is \emph{length additive} if 
\begin{equation} \ell(u) = \ell(v)+\ell(w). \end{equation}
We use 
\begin{math} u \leq_{(r)} w\end{math}
 to denote a relation in the \emph{right weak order}, so 
\begin{math} u \leq_{(r)} w\end{math}
if there exists $v \in S_n$ such that $uv= w$ and the factorization is length additive.  Similarly, we write 
\begin{math} u \leq_{(l)} w\end{math} 
and to denote \emph{left weak order}, and say 
\begin{math} u \leq_{(l)} w\end{math} if there is a length-additive factorization $vu = w$.  All functions and permutations act on the left, so 
\begin{math} \sigma \rho \end{math}
means ``first apply $\rho$, then apply $\sigma$ to the result.''

 For $w \in S_n$, $w([a])$ denotes the unordered set 
 \begin{math} \{w(1),w(2),\ldots,w(a)\}.\end{math}
 Let 
 \begin{math} I,J \subseteq \naturals\end{math} with 
 
 \begin{align}
 I &= \{i_1 < i_2 < \ldots < i_m\}\\
J &= \{j_1 < j_2 < \ldots < j_m\}
 \end{align}
 
 We say $I \leq J$ if $i_r \leq j_r$ for all $1 \leq r \leq m$.  We denote the set of all $k$-element subsets of $[n]$ by ${{[n]}\choose k}$.

\subsection{Bruhat intervals and bounded affine permutations}

Fix $k \leq n$.  The $k$-Bruhat order on $S_n$, introduced by Bergeron and Sottile in \citep{BS98}, is defined as follows.  For 
\begin{math} u,w \in S_n,\end{math}
we say that $w$ is a $k$-cover of $u$, written 
\begin{math} w \gtrdot_k u,\end{math} if 
\begin{math} w \gtrdot u\end{math}
in Bruhat order and 
\begin{math} w([k]) \neq u([k])\end{math}.    To obtain the $k$-Bruhat order on $S_n$, we take the transitive closure of these cover relations, which remain cover relations. We use the symbol $\leq_k$ to denote $k$-Bruhat order.  If 
\begin{math} u \leq_k w,\end{math}
we write $[u,w]_k$ for the $k$-Bruhat interval 
\begin{math}\{v \mid u \leq_k v \leq_k w\}.\end{math}  
We make extensive use of the following criterion for comparison in $k$-Bruhat order.
 
\begin{thm}\label{kcomp}{\em(\citep{BS98}, Theorem A)}
Let 
\begin{math}u, w \in S_n.\end{math}  Then 
\begin{math}u \leq_k w\end{math}
 if and only if
\begin{enumerate}
\item 
\begin{math}1 \leq a \leq k < b \leq n\end{math}
 implies 
 \begin{math}u(a) \leq w(a)\end{math}
 and 
 \begin{math}u(b) \geq w(b)\end{math}.
\item If $a < b$, 
\begin{math}u(a) < u(b)\end{math}
 and
 \begin{math}w(a) > w(b),\end{math}
 then 
 \begin{math} a \leq k < b.\end{math}
\end{enumerate}
\end{thm}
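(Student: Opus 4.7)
The plan is to induct on $\ell(w) - \ell(u)$, using the standard description of Bruhat covers: $v \gtrdot u$ if and only if $v = u\,t_{a,b}$ for some transposition with $u(a) < u(b)$ and no $u(c)$ with $a < c < b$ lying strictly between $u(a)$ and $u(b)$. Such a cover is a $k$-cover precisely when $a \leq k < b$, since that is exactly the condition $v([k]) \neq u([k])$.

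For the forward direction, I take a saturated $k$-chain $u = v_0 \lessdot_k \cdots \lessdot_k v_m = w$. Each step strictly raises a single value at some position $\leq k$ and strictly lowers a single value at some position $> k$, fixing all other positions, so condition (1) is preserved at each step and chains to give (1) for the pair $(u,w)$. For condition (2), suppose $a' < b'$ with $u(a') < u(b')$ and $w(a') > w(b')$. Let $i$ be the least index with $v_i(a') > v_i(b')$, and let $c \leq k < d$ be the positions transposed in the step $v_{i-1} \lessdot_k v_i$. The swap must touch $\{a',b'\}$, and a short case analysis on the possible coincidences between $\{a',b'\}$ and $\{c,d\}$---using the ``no strictly intermediate value'' condition to rule out the bad alignments---forces $a' \leq k < b'$.

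For the backward direction, assume (1) and (2) hold with $u \neq w$. I would produce a single $k$-cover $u \lessdot_k u'$ such that $(u',w)$ still satisfies (1) and (2); induction on $\ell(w)-\ell(u)$ then concludes. Condition (1) guarantees some $a \leq k$ with $u(a) < w(a)$. The strategy is to choose such an $a$ extremally, for instance with $u(a)$ maximal among $a \leq k$ satisfying $u(a) < w(a)$, and then pick $b > k$ minimizing $u(b)$ subject to $u(b) > u(a)$. Condition (2) applied to auxiliary pairs $(a,b'')$ and $(a'',b)$ should then force both the ``no strictly intermediate value'' condition for the transposition $t_{a,b}$ and the preservation of (1) and (2) after swapping. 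The delicate bookkeeping---in particular, ruling out that the swap breaks condition (2) for some pair unrelated to $\{a,b\}$, and using the extremality crucially---is the main obstacle I expect to encounter, and where I would need to invoke condition (2) in its full strength rather than just the pointwise condition (1).
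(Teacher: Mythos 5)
The paper does not prove this statement at all---it is quoted verbatim as Theorem~A of Bergeron and Sottile \citep{BS98}---so your argument has to stand on its own. Your forward direction is fine: a cover $v\gtrdot u$ with $v=ut_{a,b}$ is a $k$-cover exactly when $a\leq k<b$, condition (1) chains along a saturated $k$-chain, and the case analysis you sketch for (2) (locating the first step where the pair's relative order flips and using the ``no intermediate value'' property of a cover) does close up.

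The backward direction, however, has a genuine gap, and it is not mere bookkeeping: the extremal choice you propose is wrong as stated. Take $n=5$, $k=1$, $u=24315$ and $w=42315$ (one-line notation). Here $w=ut_{1,2}$, so $u\lessdot_1 w$ and conditions (1),(2) hold. Your recipe forces $a=1$ (the only position $\leq k$ with $u(a)<w(a)$) and then picks $b>1$ minimizing $u(b)$ subject to $u(b)>u(a)=2$, which is $b=3$ with $u(3)=3$. The swap $u'=ut_{1,3}=34215$ is indeed a $k$-cover of $u$, but $u'\not\leq w$ in Bruhat order: condition (1) already fails at position $3$, since $u'(3)=2<3=w(3)$ with $3>k$. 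So the proposed induction step collapses; the correct move here is $b=2$. The failure mode is exactly the case $w(b)=u(b)>u(a)$ for your minimizing $b$: choosing $b$ from the $u$-data alone cannot work, and one must constrain $b$ using $w$ as well (for instance demanding $u(a)<u(b)\leq w(a)$ together with $w(b)\leq u(a)$), then prove that such a $b$ exists and that the swap is a cover preserving both (1) and (2). That existence-and-preservation argument is precisely the substantive half of Bergeron--Sottile's theorem, and it is the part your proposal defers to ``delicate bookkeeping'' without supplying it.
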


Following \citep[Section 2.3]{KLS13}, we define an equivalence relation on $k$-Bruhat intervals, generated by setting 
\begin{math}[u,w]_k \sim [x,y]_k\end{math}
 if there is some 
 \begin{math} z \in S_k \times S_{n-k}\end{math}
 such that $x = uz$ and $y=wz$, with both factorizations length additive.  We write 
\begin{math} \langle u, w \rangle_k\end{math}
for the equivalence class of $[u,w]_k$, and denote the set of all such classes by $\mathcal{Q}(k,n)$.  There is a partial order on $\mathcal{Q}(k,n),$ defined by setting 
\begin{math}\langle u, w \rangle_k \leq \langle x, y \rangle_k\end{math} 
if there exist representatives $[u',w']_k$ of 
\begin{math} \langle u,w \rangle_k\end{math}
 and $[x',y']_k$ of 
 \begin{math} \langle x, y \rangle_k\end{math}
 with \begin{math} [x',y']_k \subseteq [u',w']_k.\end{math}

The poset $\mathcal{Q}(k,n)$ was first studied by Rietsch, in the context of closure relations for totally nonnegative cells in general flag manifolds \citep{Rie06}.  Williams proved a number of combinatorial results about this poset, also in a more general setting \citep{Wil07}.  Our notation and conventions for the Grassmannian case are from \citep[Section 2]{KLS13}, where the authors discuss the combinatorics of $\mathcal{Q}(k,n)$ in some depth.  

We now recall some facts from \citep[Section 3]{KLS13} about \emph{bounded affine permutations}.  The poset of bounded affine permutations is isomorphic to $\mathcal{Q}(k,n)$, and anti-isomorphic to the poset of Postnikov's \emph{decorated permutations} \citep{Pos06, KLS13}.

\begin{defn} An \textbf{affine permutation} of order $n$ is a bijection 
\begin{math}f:\integers \rightarrow \integers\end{math}
which satisfies the condition 
\begin{equation} \label{affine} f(i+n) = f(i)+n\end{equation}
 for all 
 \begin{math} i \in \integers.\end{math}
 The affine permutations of order $n$ form a group, which we denote 
 \begin{math} \widetilde{S}_n.\end{math}
 \end{defn}

We may embed $S_n$ in $\widetilde{S}_n$ by extending each permutation periodically, in accordance with \eqref{affine}. 

\begin{defn}
An affine permutation of order $n$ has type $(k,n)$ if
\begin{equation}\displaystyle \frac{1}{n} \sum_{i=1}^n (f(i)-i)=k.\end{equation}
We denote the set of affine permutations of type $(k,n)$ by 
\begin{math}\widetilde{S}^k_n.\end{math}
\end{defn}

Affine permutations of type $(0,n)$ form an infinite Coxeter group, the \emph{affine symmetric group}, often denoted 
\begin{math}\widetilde{A}_{n-1}.\end{math}  This group has simple generators 
\begin{math}s_1,\ldots,s_{n},\end{math}
where $s_i$ is the affine permutation which interchanges $i+rn$ and $i+1+rn$, for each 
\begin{math} r \in \integers.\end{math}
  The reflections in 
  \begin{math} \widetilde{S}^0_n\end{math}
  are given by $t_{(i,j)}$ for 
  \begin{math} i \not\equiv j \pmod{n},\end{math}
  where $t_{(i,j)}$ is the affine permutation which interchanges $i+rn$ and $j+rn$ for all 
  \begin{math}r \in \integers.\end{math}

The Bruhat order on 
\begin{math}\widetilde{S}^0_n
\end{math}
is defined by taking the transitive closure of the relations 
\begin{math} u \rightarrow w\end{math}, 
where we set
\begin{math} u \rightarrow w\end{math} 
if there exists 
\begin{math} i<j \in \integers\end{math}
with 
\begin{math} i \not\equiv j \pmod{n},\end{math} 
such that 
\begin{math}u(i) < u(j)\end{math} 
 and 
 \begin{math} v = ut_{(i,j)}.\end{math}  For further discussion, see \citep[Section 2.1]{BB05} and \citep[Section 8.3]{BB05}

There is a natural bijection 
\begin{math} \widetilde{S}^k_n \rightarrow \widetilde{S}^0_n,\end{math} which takes 
\begin{math} f \in \widetilde{S}^k_n\end{math} 
to the function 
\begin{math} i \mapsto f(i-k)\end{math}.
Hence the Bruhat order on 
\begin{math}\widetilde{S}^0_n\end{math} induces a partial order $\leq$ on 
\begin{math} \widetilde{S}^k_n.\end{math}
Abusing terminology, we call this the \emph{Bruhat order} on 
\begin{math}\widetilde{S}^k_n.\end{math}  
  We denote the Bruhat order on 
  \begin{math} \widetilde{S}^0_n\end{math}
   by $\leq$.

\begin{defn}
An affine permutation in 
\begin{math} \widetilde{S}_n\end{math}
 is \textbf{bounded} if it satisfies the condition
\begin{equation}i \leq f(i) \leq f(i+n) \text{ for all }i \in \integers.\end{equation}
We write $\Bd(k,n)$ for the set of all bounded affine permutations of type $(k,n)$.  
\end{defn}

The set $\Bd(k,n)$ inherits the Bruhat order from 
\begin{math} \widetilde{S}^k_n.\end{math}  In fact, $\Bd(k,n)$ is a lower order ideal in 
\begin{math} \widetilde{S}^k_n\end{math}  
\cite[Lemma 3.6]{KLS13}
Similarly, the length function $\ell$ on 
\begin{math} \widetilde{S}_n^0\end{math}
 induces a grading on $\Bd(k,n)$, which we again denote $\ell$.  To find the rank of 
\begin{math} f \in \Bd(k,n),\end{math}
we count equivalence classes of inversions of $f$, defined below.
An \emph{inversion} of a bounded affine permutation $f$ is a pair $i < j$ such that 
\begin{math} f(i) > f(j).\end{math}
Two inversions $(i,j)$ and $(i',j')$ are equivalent if 
\begin{math} i'=i+rn\end{math}
and
\begin{math}j'=j+rn\end{math}
for some
\begin{math} r \in \integers.\end{math} 
The length of $f$ is the number of equivalence classes of inversions \citep[Theorem 5.9]{KLS13}.

For 
\begin{math} J \in {{n}\choose{k}},\end{math}
 we define the \emph{translation element} 
 \begin{math} t_J \in \widetilde{S}_n\end{math} by setting 
\begin{equation}t_J(i)=\begin{cases}
i+n & i \in J\\
i & i \not\in J
\end{cases}\end{equation}
for 
\begin{math}1 \leq i \leq n,\end{math}
 and extending periodically.
Every 
\begin{math} f \in \Bd(k,n)\end{math} may be written in the form
\begin{equation}f=\sigma t_{\mu}=t_{\nu}\sigma\end{equation}
where 
\begin{math} \sigma \in S_n,\end{math}
with 
\begin{math}\mu, \nu \in {{n}\choose{k}},\end{math} 
and $t_{\mu}$, $t_{\nu}$ translation elements.  Both factorizations are unique.

We now give an isomorphism between $\mathcal{Q}(k,n)$ and $\Bd(k,n)$.  Fix 
\begin{math}\langle u,w \rangle_k \in \mathcal{Q}(k,n)\end{math}.  
Let $\omega_k$ be the element of $\integers^n$ whose first $k$ entires are $1$'s, and whose remaining entries are $0$'s.  The function 
\begin{equation}\label{Q2Bd} f_{u,w} = ut_{\omega_k}w^{-1} \end{equation}
is a bounded affine permutation of type $(k,n)$.  If $[u',w']_k$ is any other representative of 
\begin{math} \langle u,w \rangle_k,\end{math}
then
\begin{math} f_{u',w'}=f_{u,w}.\end{math}
Hence this process gives a well-defined map 
\begin{math} \mathcal{Q}(k,n) \rightarrow \Bd(k,n),\end{math} which is in fact an isomorphism of posets \citep[Section 3.4]{KLS13}.
Note that for the translation elements $t_{w([k])}$ and $t_{u([k])},$ we have
\begin{equation} \label{permfactors} f_{u,w} = uw^{-1}t_{w([k])}=t_{u([k])}uw^{-1} \end{equation}
This follows easily from \eqref{Q2Bd}.

\subsection{Grassmannians, flag varieties, and Richardson varieties}

Let $\Gr(k,n)$ denote the Grassmannian of $k$-dimensional linear subspaces of the vector space $\complexes^n$.  We may realize $\Gr(k,n)$ as the space of full-rank $k \times n$ matrices modulo the left action of $\text{GL}(k)$, the group of invertible $k\times k$ matrices; a matrix $M$ represents the space spanned by its rows.  We number the rows of our matrices from top to bottom, and the columns from left to right.

The \emph{Pl{\"u}cker embedding}, which we denote $p$, maps $\Gr(k,n)$ into the projective space 
\begin{math}\mathbb{P}^{ {n\choose k}-1}\end{math}
with homogeneous coordinates $x_J$ indexed by the elements of 
\begin{math}{{[n]}\choose k}.\end{math}
For 
\begin{math} J \in {[n] \choose k}\end{math}
let $\Delta_J$ denote the minor with columns indexed by $J$.  Let $V$  be an $n$-dimensional subspace of $\complexes^n$ with representative matrix $M$.  Then $p(V)$ is the point defined by 
\begin{math} x_J = \Delta_J(M)\end{math}.
This map embeds $\Gr(k,n)$ as a smooth projective variety in 
\begin{math} \mathbb{P}^{ {n\choose k}-1}.\end{math}
The homogeneous coordinates $\Delta_J$ are known as \emph{Pl{\"u}cker  coordinates} on $\Gr(k,n)$.  The \emph{totally nonnegative Grassmannian}, denoted $\Gr_{\geq 0}(k,n),$ is the subset of $\Gr(k,n)$ whose Pl{\"u}cker coordinates are all nonnegative real numbers, up to multiplication by a common scalar. 

The \emph{flag variety} $\mathcal{F}\ell(n)$ is an algebraic variety whose points correspond to flags
\begin{equation}V_{\bullet} = \{0 \subset V_1\subset V_2 \subset \cdots \subset V_n = \complexes^n\}\end{equation}
where $V_i$ is a subspace of $\complexes^n$ of dimension $i$.  
We view $\mathcal{F}\ell(n)$ as the quotient of $\text{GL}(n)$ by the left action of $B$, the group of $n \times n$ lower triangular matrices.  Hence an $n \times n$ matrix $M$ represents the flag whose $i^{th}$ subspace is the span of the first $i$ rows of $M$. Note that these conventions differ from those used in \citep{MR04}, so we will use slightly different conventions for our matrix parametrizations. 

There is a natural projection 
\begin{math} \pi_k:\mathcal{F}\ell(n) \rightarrow \Gr(k,n),\end{math}
which carries a flag $V_{\bullet}$ to the $k$-plane $V_k$.  
If $V$ is a representative matrix for $V_{\bullet}$, the first $k$ rows of $V$ give a representative matrix $M$ for $V_k$.  Let $I_k$ be the $k \times n$ matrix whose first $k$ columns form a copy of the identity matrix, and whose remaining entries are $0$.  Then we have $M = I_k V$.

We now recall the definitions of Schubert and Richardson varieties, following the conventions of  \citep[Section 4]{KLS13}.  For a subset $J$ of $[n]$, let 
\begin{math} \Prj_J:\complexes^n \rightarrow \complexes^J \end{math} be projection onto the coordinates indexed by $J$.  For a permutation $w \in S_n$, we define the \emph{Schubert cell} corresponding to $w$ by
\begin{equation}\mathring{X}_{w}=\{ V_{\bullet} \in \mathcal{F}\ell(n) \mid \dim(\Prj_{[j]}(V_i)) = |w([i]) \cap [j]| \text{ for all } i\}\end{equation}
The closure of $\mathring{X}_w$ is the the \emph{Schubert variety}
\begin{equation}X_{w}=\{ V_{\bullet} \in \mathcal{F}\ell(n) \mid \dim(\Prj_{[j]}(V_i)) \leq |w([i]) \cap [j]| \text{ for all } i\}\end{equation}

Similarly, we define the \emph{opposite Schubert cell} and \emph{opposite Schubert variety} respectively by 
\begin{equation}\mathring{X}^{w}=\{ V_{\bullet} \in \mathcal{F}\ell(n) \mid \dim(\Prj_{[n-j+1,n]}(V_i)) = |w([i]) \cap [n-j+1,n]| \text{ for all } i\}\end{equation}
\begin{equation}X^{w}=\{ V_{\bullet} \in \mathcal{F}\ell(n) \mid \dim(\Prj_{[n-j+1,n]}(V_i)) \leq |w([i]) \cap [n-j+1,n]| \text{ for all } i\}\end{equation}

The Schubert cells (respectively, opposite Schubert cells) form a stratification of $\mathcal{F}\ell(n)$, which has been studied extensively.
We define the \emph{open Richardson variety} 
\begin{math} \mathring{X}_u^w\end{math}
 to be the intersection 
 \begin{math} \mathring{X}_u \cap \mathring{X}^w.\end{math}  The closure of 
 \begin{math} \mathring{X}_u^w\end{math}
 is the \emph{Richardson variety} $X_u^w.$
The variety $X_u^w$ is empty unless $u \leq w$, in which case it has dimension 
\begin{math} \ell(w)-\ell(u).\end{math}
Open Richardson varieties form a stratification of $\mathcal{F}\ell(n)$, which refines the Schubert stratification.

\subsection{Positroid varieties}

Let 
\begin{math}V \in \Gr_{\geq 0}(k,n)\end{math}.
The indices of the non-vanishing Pl{\"u}cker  coordinates of $V$ give a set
\begin{math} \mathcal{J} \subseteq {{[n]}\choose k}\end{math}
called the \emph{matroid} of $V$.  We define the \emph{matroid cell} 
\begin{math} \mathcal{M}_{\mathcal{J}}\end{math}
as the locus of points $V \in \Gr_{\geq 0}(k,n)$ with matroid $\mathcal{J}$.  The nonempty matroid cells in $\Gr_{\geq 0}(k,n)$ are the \emph{positroid cells} defined by Postnikov.  Positroid cells form a stratification of $\Gr_{\geq 0}(k,n)$, and each cell is homeomorphic to 
\begin{math}(\reals^+)^d\end{math} for some $d$ \citep[Theorem 3.5]{Pos06}.

The positroid stratification of $\Gr_{\geq 0}(k,n)$ extends to the complex Grassmannian $\Gr(k,n)$.  Taking the Zariski closure of a positroid cell of $\Gr_{\geq 0}(k,n)$ in $\Gr(k,n)$ gives a \emph{positroid variety}.  For a positroid variety
\begin{math} \Pi \subseteq \Gr(k,n),\end{math} we define the \emph{open positroid variety} 
\begin{math} \mathring{\Pi} \subset \Pi\end{math}
by taking the complement in $\Pi$ of all lower-dimensional positroid varieties.  The open positroid varieties give a stratification of $\Gr(k,n)$ \citep{KLS13}.

Positroid varieties in $\Gr(k,n)$ may be defined in numerous other ways.  There is a beautiful description of positroid varieties as intersections of cyclically permuted Schubert varieties.  In particular, the positroid decomposition refines the well-known Schubert decomposition of $\Gr(k,n)$, and shares many of its nice geometric properties \citep{KLS13}.  

Remarkably, positroid varieties in $\Gr(k,n)$ coincide with projected Richard varieties \citep[Section 5.4]{KLS13}. Indeed, let \begin{math} u \leq_k w.\end{math}
The projection $\pi_k$ maps 
\begin{math} \mathring{X}_u^w\end{math}
homeomorphically onto its image, which is an open positroid variety 
\begin{math} \mathring{\Pi}_{u,w}.\end{math}  
The closure of $\mathring{\Pi}_{u,w}$ is a (closed) positroid variety $\Pi_{u,w}$ and we have
\begin{equation} \pi_k(X_u^w)=\Pi_{u,w}.\end{equation}
Moreover, if $[u',w']_k$ is any representative of 
\begin{math} \langle u, w \rangle_k \in \mathcal{Q}(k,n),\end{math}
then 
\begin{math} \mathring{\Pi}_{u,w} = \mathring{\Pi}_{u',w'}\end{math}  
which implies \begin{math} \Pi_{u,w} = \Pi_{u',w'}\end{math}.
Hence each class 
\begin{math}\langle u, w \rangle_k \in \mathcal{Q}(k,n)\end{math} corresponds to a unique open positroid variety 
\begin{math}\mathring{\Pi}_{\langle u,w \rangle_k} = \pi_k(\mathring{X}_u^w )\end{math}
with closure $\Pi_{\langle u, w \rangle_k}$.
This correspondence gives an isomorphism between $\mathcal{Q}(k,n)$ and the poset of positroid varieties, ordered by \emph{reverse} inclusion \citep[Section 5.4]{KLS13}.

Since $\mathcal{Q}(k,n)$ is isomorphic to the poset of positroid varieties, so is $\Bd(k,n)$.  We write $\Pi_f$ for the positroid variety corresponding to a bounded affine permutation $f$. The length of $f$ gives the \emph{codimension} of the corresponding positroid variety \citep{KLS13}.  There are numerous other combinatorial objects that index positroid varieties, including Le diagrams and Grassmann necklaces.  For details, see \citep{Pos06, KLS13, MS14}.  

\subsection{Plabic graphs and bridge graphs}

A \emph{plabic graph} is a planar graph embedded in a disk, with each vertex colored black or white.  (Plabic is short for planar bicolor.)  The boundary vertices are numbered $1,2,\ldots,n$ in clockwise order, and all boundary vertices have degree one.  We call the edges adjacent to boundary vertices \emph{legs} of the graph, and a leaf adjacent to a boundary vertex a \emph{lollipop}.

Postnikov introduced plabic graphs in \citep[Section 11.5]{Pos06},  where he used them to construct parametrizations of positroid cells in the totally nonnegative Grassmannian.  In this paper, we follow the conventions of \citep{Lam13a}, which are more restrictive than Postnikov's. In particular, we require our plabic graphs to be bipartite, with the black and white vertices forming the partite sets.  An \emph{almost perfect matching} on a plabic graph is a subset of its edges which uses each interior vertex exactly once; boundary vertices may or may not be used.  We consider only plabic graphs which admit an almost perfect matching.

We can write any plabic graph as a union of paths and cycles, as follows.  Start with any edge $e=\{u,v\}$.  Begin traversing this edge in either direction, say 
\begin{math} u \rightarrow v.\end{math}
Turn (maximally) left at every white vertex, and (maximally) right at every black vertex.  Repeating this process gives a description of $G$ as a union of directed paths and cycles, called \emph{trips}.  Each edge is used twice in this decomposition, once in each direction.  Given a plabic graph $G$ with $n$ boundary vertices, we define the \emph{trip permutation} 
\begin{math} \sigma_G \in S_n\end{math}
of $G$ by setting 
\begin{math} \sigma_G(a) = b\end{math}
if the trip that start at boundary vertex $a$ ends at boundary vertex $b$.  

A plabic graph $G$ is \emph{reduced} if it satisfies the following criteria
\begin{enumerate}
\item $G$ has no trips which are cycles.
\item $G$ has no leaves, except perhaps some which are adjacent to boundary vertices. 
\item No trip uses the same edge twice, once in each direction, unless that trip starts (and ends) at a boundary vertex connected to a leaf.
\item No trips $T_1$ and $T_2$ share two edges $e_1,e_2$ such that $e_1$ comes before $e_2$ in both trips.  
\end{enumerate}

If $G$ is a reduced graph, each fixed point of $\sigma_G$ corresponds to a boundary leaf \citep{Lam13a}.
Suppose $G$ has $n$ boundary vertices, and suppose we have
\begin{equation}k=|\{a \in [n] \mid \sigma_G(a) < a \text{ or $\sigma_G(a)=a$ and $G$ has a white boundary leaf at $a$}\}|\end{equation}
Then we can construct a bounded affine permutation $f_G \in \Bd(k,n)$ corresponding to $G$ by setting
\begin{equation}f_G(a) = 
\begin{cases}
\sigma_G(a) & \sigma_G(a) > a \text{ or $G$ has a black boundary leaf at $a$}\\
\sigma_G(a)+n & \sigma_G(a) < a \text{ or $G$ has a white boundary leaf at $a$}
\end{cases}
\end{equation}
Thus we have have a correspondence between plabic graphs and positroid varieties; to a reduced plabic graph $G$, we associate the positroid $\Pi_G$ corresponding to $f_G$.  This correspondence is not a bijection.  Rather, we have a family of reduced plabic graphs for each positroid variety \citep{Lam13a}.  

We now describe a way to build plabic graphs inductively by adding new edges, called \emph{bridges}, to existing graphs.  The resulting graphs are called \emph{bridge graphs}.  This construction appears in \citep{ABCGPT14}, and also (in slightly less general form) in \citep{Lam13a}.  

We begin with a plabic graph $G$.  To add a bridge, we choose a pair of boundary vertices $a < b$, such every 
\begin{math} c \in [a+1,b-1]\end{math}
is a lollipop.  Our new edge will have one vertex on the leg at $a$, and one on the leg at $b$.  If $a$ (respectively $b$) is a lollipop, then the leaf at $a$ must be white (respectively black), and we use that boundary leaf as one endpoint of the bridge.  If $a$ (respectively $b$) is not a lollipop, we instead insert a white (black) vertex in the middle of the leg at $a$ (respectively, $b$).  We call the new edge an $(a,b)$-\emph{bridge}. After adding the new edge, our graph may no longer be bipartite.  In this case, we insert additional vertices of degree two or change the color of boundary vertices as needed to obtain a bipartite graph $G'$.  (See Figure \ref{addbridge}.)  If $G$ admits an almost perfect matching, then $G'$ does as well, so this process yields a plabic graph.  

The following proposition has been part of the plabic graph folklore for some time \citep{ABCGPT14, Lam13a}.  For completeness, we include a careful proof.

\begin{prop}\label{canadd}
Suppose $G$ is reduced.  Choose 
\begin{math} 1 \leq a < b \leq n\end{math}
such that
\begin{math} f_G(a) > f_G(b),\end{math}
and each 
\begin{math}c \in [a+1,b-1]\end{math}
is a lollipop.  Let $G'$ be the graph obtained by adding an $(a,b)$-bridge to $G$.  Then $G'$ is reduced, and 
\begin{displaymath} f'_G = f \circ (a,b) \in \Bd(k,n).\end{displaymath}  
Moreover, 
\begin{math}f'_G \lessdot f_G\end{math}
in the Bruhat order on $\Bd(k,n)$, and so $\Pi_G$ is a codimension-one subvariety of $\Pi_{G'}$.  
\end{prop}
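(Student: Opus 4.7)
The plan is to deduce all three claims---the formula $f_{G'} = f_G \circ (a,b)$, reducedness of $G'$, and the Bruhat cover relation---from a direct trip computation in $G'$. Let $w_a$ denote the white endpoint of the bridge (on the leg at $a$) and $v_b$ its black endpoint (on the leg at $b$). A trip entering $G'$ at boundary vertex $a$ first reaches $w_a$, where turning maximally left carries it across the bridge to $v_b$; at $v_b$ turning maximally right then sends it inward along the leg at $b$. From here the trip follows exactly the path traced by the $b$-trip in $G$, terminating at $\sigma_G(b)$; symmetrically $\sigma_{G'}(b) = \sigma_G(a)$. For $c \in [a+1,b-1]$ the vertex is a lollipop in both $G$ and $G'$, so its trip is fixed; and trips originating outside $[a,b]$ cannot reach the bridge without entering or exiting through one of these lollipops. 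Hence $\sigma_{G'} = \sigma_G \circ (a,b)$.

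To pass from $\sigma_{G'}$ to $f_{G'}$, note that for $c \notin \{a,b\}$ the lollipop status of $c$ is unchanged, so $f_{G'}(c) = f_G(c)$. For $c = a$ I split into cases according to whether $\sigma_G(b)$ exceeds $b$, equals $b$ (so $b$ is a black lollipop in $G$), or lies below $a$; the intermediate case $\sigma_G(b) \in [a+1,b-1]$ is excluded by the lollipop hypothesis, and $\sigma_G(b) = a$ is ruled out by combining the hypothesis $f_G(a) > f_G(b)$ with the bound $f_G(a) \leq a+n$. In each remaining case a short computation using the definition of $f_G$ produces $f_{G'}(a) = f_G(b)$; the case $c = b$ is symmetric, and together these yield $f_{G'} = f_G \circ (a,b)$.

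Next I verify the four reducedness conditions for $G'$. Cyclic trips are impossible, since the computation above exhibits each trip of $G'$ as a path between two distinct boundary vertices. No new interior leaves are introduced by the bridge construction. The bridge edge is traversed at most once by any given trip, since only the trips starting at $a$ and $b$ touch it, and each enters and exits exactly once. Finally, any ``bad double crossing''---a pair of trips sharing two edges in the same order---would have to involve the bridge, and would therefore descend to a bad double crossing of the corresponding trips in $G$, contradicting the reducedness of $G$.

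For the cover relation, under the bijection $\widetilde{S}^k_n \cong \widetilde{S}^0_n$, right-multiplication of $f_G$ by the affine reflection $t_{(a,b)}$ swaps $f_G(a+rn)$ with $f_G(b+rn)$ for every $r \in \integers$. The hypothesis $f_G(a) > f_G(b)$ means $(a,b)$ is an inversion of $f_G$, and removing the single equivalence class $\{(a+rn, b+rn) \mid r \in \integers\}$ decreases the length by exactly one; hence $f_{G'} \lessdot f_G$ in $\Bd(k,n)$, and by the order-reversing isomorphism with the poset of positroid varieties, $\Pi_G$ has codimension one in $\Pi_{G'}$. The main obstacle is the reducedness check, especially the ``bad double crossing'' condition, where one must trace carefully how paired edges in $G'$ relate to those in $G$ across the new bridge.
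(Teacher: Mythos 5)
Your trip computation, the identification $\sigma_{G'} = \sigma_G \circ (a,b)$, and the boundedness argument pinning down $f_{G'} = f_G \circ (a,b)$ all match the paper's proof. The genuine gap is in the reducedness check, precisely at the step you flagged as the main obstacle. The dangerous configuration is a pair of shared edges in $G'$ one of which \emph{is} the bridge: the new trips through $a$ and $b$ each traverse the bridge first and then follow (the tails of) the old trips $T_b$ and $T_a$ respectively, so if $T_a$ and $T_b$ share even a \emph{single} edge $e$ in $G$, then the bridge and $e$ occur in the same order in both new trips and condition (4) fails for $G'$. This configuration does not ``descend to a bad double crossing in $G$'': the bridge does not exist in $G$, so all that descends is one crossing of $T_a$ with $T_b$, which reducedness of $G$ does not forbid. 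What must actually be proved is that $T_a$ and $T_b$ share no edge at all, and this is exactly where the hypothesis $f_G(a) > f_G(b)$ enters (your reducedness paragraph never uses it, and the conclusion is false without it): that hypothesis says $a$ and $b$ are aligned in Postnikov's sense, and the paper's topological argument shows that a single crossing of $T_a$ and $T_b$ between aligned boundary points would force a second crossing in the same order, or a self-crossing, contradicting conditions (3)--(4) for $G$. Your treatment of the cases not involving the bridge (self-crossings, and double crossings entirely inside the old trips) is fine and agrees with the paper's reduction.

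There is a second, smaller gap in the cover relation. Right multiplication by the affine reflection $t_{(a,b)}$ does not in general remove only the one inversion class of $(a,b)$: if some $c \in [a+1,b-1]$ satisfied $f_G(b) < f_G(c) < f_G(a)$, the classes of $(a,c)$ and $(c,b)$ would also be destroyed, the length would drop by $1+2m$ with $m \geq 1$, and $f_{G'}$ would lie strictly below $f_G$ without being a cover. Ruling this out is where the lollipop hypothesis is needed: each such $c$ is a fixed point of $\sigma_G$, so $f_G(c) \in \{c, c+n\}$, while boundedness gives $b \leq f_G(b) < f_G(a) \leq a+n$, whence $f_G(c)$ is never trapped between $f_G(b)$ and $f_G(a)$. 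The paper carries out exactly this verification; your proposal asserts the length drop of one without it.
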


\begin{proof}
For $i \in [n]$, let $T_i$ be the trip in $G$ which starts at vertex $i$, and let $T'_i$ be the corresponding trip in $G'$.  Then $T_i'$ coincides with $T_i$ for $i \neq a,b$. 
The trip $T'_a$ traverses first the $a$ leg, then the $(a,b)$-bridge, and then follows the path in $G'$ corresponding to the tail of $T_b$ after the leg at $b$.  Similarly, $T'_b$ traverses the $b$ leg and the $(a,b)$-bridge, then follows the tail corresponding to $T_a$.  Note that the trips $T_i'$ cover each edge of $G'$ exactly twice; there are no additional paths or cycles in the trip decomposition. Hence, to show that $G'$ is reduced, it is enough to show that $T'_a$ and $T_b'$ do not share two edges $e_1$ and $e_2$ that appear in the same order in both trips, and that neither $T'_a$ nor $T'_b$ use the same edge twice.
In particular, it suffices to show that the trips $T_a$ and $T_b$ in $G$ have no common edges.

Two trips in a plabic graph may touch at a common vertex without crossing; they cross if and only if they share a common edge.  Similarly, a trip crosses itself if and only if it traverses the same edge twice \citep[Section 13]{Pos06}.  Hence condition (3) in the definition of a reduced graph says that no trip crosses itself, except the trip at a boundary leaf, while condition (4) says that no two trips cross each other twice, with the two crossings occurring in the same order in both trips.

Since $f(a) > f(b),$ one of three things must hold
\begin{equation}\begin{cases}
f(a),f(b) \leq n& \sigma_G(a) > \sigma_G(b)\\
f(a),f(b) > n & \sigma_G(a) > \sigma_G(b)\\
f(a) > n,\,f(b) \leq n & \sigma_G(a) \leq a < b \leq \sigma_G(b)
\end{cases}\end{equation}
In Postnikov's language, we say $a$ and $b$ are \emph{aligned} \citep[Section 17]{Pos06}.
It follows from simple topological arguments that $T_a$ and $T_b$ cannot cross without violating condition (3) or (4).  Thus they have no common edges, as desired.  

It follows from the definitions that 
\begin{math}\sigma_{G'} = \sigma_G \circ (a,b).\end{math}
Since 
\begin{math} f_G(a) > f_G(b),\end{math}
and $f_G$ is a bounded affine permutation, we must have 
\begin{equation} a < f_G(b) < a+n \end{equation}
\begin{equation} b <  f_G(a) <  b+n.\end{equation}
Hence 
\begin{math} f_G \circ (a,b)\end{math}
is the unique bounded affine permutation of type $(k,n)$ corresponding to $\sigma_{G'}$, and $f_{G'} = f_G$.  The fact that 
\begin{math} f_G' \rightarrow f_G\end{math}
in Bruhat order follows from the definition of Bruhat order on $\Bd(k,n)$.  Since each 
\begin{math} c \in [a+1,b-1]\end{math}
is a fixed point of $\sigma_G$, while
\begin{equation}b \leq f_G(b) < f_G(a) \leq a+n,\end{equation}
it follows that either
\begin{equation} f_G(c) < f_G(a),f_G(b)\end{equation}
or
\begin{equation} f_G(c) > f_G(a),f_G(b) \end{equation}
for each 
\begin{math} c \in [a+1,b-1].\end{math}
Hence 
\begin{equation} \ell(f_{G'}) = \ell(f_G) - 1\end{equation} and so $f_{G'} \lessdot f_G$ as desired.  
\end{proof}

\begin{figure}[ht]
\centering
\begin{subfigure}[b]{\textwidth}
\centering
\begin{tikzpicture}[scale = 0.7]
\draw (2,4) -- (2,0);
\draw (0,3) -- (2,3); \wdot{0}{3}; \bdot{2}{3};
\draw (0,1) -- (2,1); \bdot{0}{1}; \wdot{2}{1};
\draw [->] (3,2) [out = 30, in = 150] to (5,2);
\begin{scope}[xshift = 6 cm]
\draw (2,4) -- (2,0);
\draw (0,3) -- (2,3); \wdot{0}{3}; \bdot{2}{3};
\draw (0,1) -- (2,1); \bdot{0}{1}; \wdot{2}{1};
\edge{0}{1}{2};
\end{scope}
\end{tikzpicture}
\caption{Adding a bridge between boundary leaves.}
\end{subfigure}
\\
\vspace{0.2 in}

\begin{subfigure}[b]{\textwidth}
\centering
\begin{tikzpicture}[scale = 0.7]
\draw (2,4) -- (2,0);
\draw (-1,3.5) -- (0,3);
\draw (-1,2.5) -- (0,3);
\draw (-1,1.5) -- (0,1);
\draw (-1,0.5) -- (0,1);
\draw (0,3) -- (2,3); \wdot{0}{3}; \bdot{2}{3};
\draw (0,1) -- (2,1); \bdot{0}{1}; \wdot{2}{1};
\draw [->] (3,2) [out = 30, in = 150] to (5,2);
\begin{scope}[xshift = 8 cm]
\draw (2,4) -- (2,0);
\draw (-2,3.5) -- (-1,3);
\draw (-2,2.5) -- (-1,3);
\draw (-2,1.5) -- (-1,1);
\draw (-2,0.5) -- (-1,1);
\draw (-1,3) -- (2,3); \wdot{-1}{3}; \bdot{2}{3};
\draw (-1,1) -- (2,1); \bdot{-1}{1}; \wdot{2}{1};
\edge{0.5}{1}{2};
\draw [->] (3,2) [out = 30, in = 150] to (5,2);
\end{scope}
\begin{scope}[xshift = 16 cm]
\draw (2,4) -- (2,0);
\draw (-2,3.5) -- (-1,3);
\draw (-2,2.5) -- (-1,3);
\draw (-2,1.5) -- (-1,1);
\draw (-2,0.5) -- (-1,1);
\draw (-1,3) -- (2,3); \wdot{-1}{3}; \bdot{2}{3};
\draw (-1,1) -- (2,1); \bdot{-1}{1}; \wdot{2}{1};
\edge{0.5}{1}{2}; \bdot{-0.25}{3}; \wdot{-0.25}{1};
\end{scope}
\end{tikzpicture}
\caption{Adding a bridge between legs which are not boundary leaves.  Note that after adding the bridge, we add additional vertices of degree $2$ to create a bipartite graph.}
\end{subfigure}
\caption{Adding bridges to a plabic graph.}
\label{addbridge}
\end{figure}
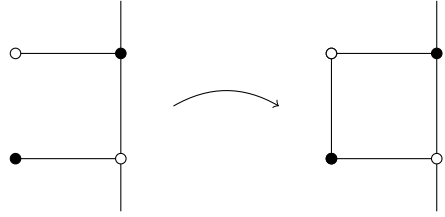
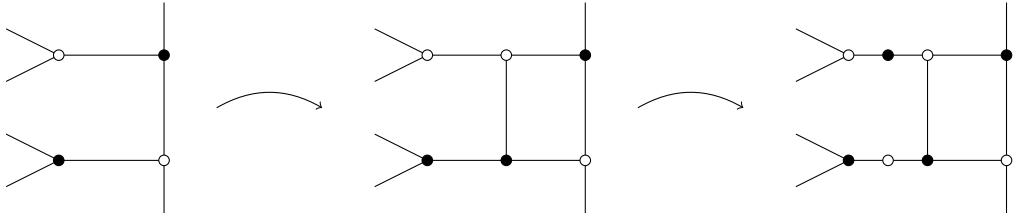

The zero-dimensional positroid varieties correspond to the points in $\Gr(k,n)$ which have a single non-zero Pl\"{u}cker coordinate $\mu$.  There is a unique reduced plabic graph for each $\mu \in {{[n]}\choose{k}}$, which has $n$ lollipops.  The $k$ lollipops corresponding to elements of $\mu$ are white; the rest are black.  We call a plabic graph consisting only of lollipops a \emph{lollipop graph}.  

A \emph{bridge graph} is a plabic graph which is constructed from a lollipop graph by successively adding bridges 
\begin{equation}(a_1,b_1),\ldots,(a_d,b_d),\end{equation} 
where at each step, $a_i,b_i$ is an inversion of the bounded affine permutation corresponding to the graph obtained by adding the first $i-1$ bridges.
By the preceding proposition, a bridge graph is always reduced.  
Let $u \leq_k w$.  Then by \eqref{permfactors} we have
\begin{equation}f_{u,w}= t_{u([k])}uw^{-1}\end{equation}
where $t_{u([k])}$ is the translation element corresponding to $u([k])$.
To construct a bridge graph for 
\begin{math}\Pi_{\langle u,w\rangle_k}\end{math}
we begin with the lollipop graph corresponding to $u([k])$, and successively add bridges to obtain a graph with bounded affine permutation $f_{u,w}$.  It is perhaps not obvious that every positroid variety has a bridge graph.  However, this follows both from our results, and from earlier work on the subject.  In particular, Postnikov's Le-diagrams correspond to a particular choice of bridge graph for each bounded affine permutation.

\subsection{Parametrizations from plabic graphs}

Let $G$ be a reduced plabic network with $e$ edges, and assign weights 
\begin{math}t_1,\ldots,t_e\end{math}
to the edges of $G$.  Postnikov defined a surjective map from the space of positive real edge weights of $G$ to the positroid cell 
\begin{math}\left(\mathring{\Pi}_G\right)_{\geq 0}\end{math}
in $\Gr_{\geq 0}(k,n)$, called the \emph{boundary measurement map} \citep[Section 11.5]{Pos06}.   Postnikov, Speyer and Williams re-cast this construction in terms of almost perfect matchings \citep[Section 4-5]{PSW09}, an approach Lam developed further in \citep{Lam13a}.  Muller and Speyer showed that we can apply the same map to the space of nonzero complex edge weights, and obtain a map to the positroid variety 
\begin{math} \mathring{\Pi}_G\end{math}
in $\Gr(k,n)$ \citep{MS14}.  We use the definition of the boundary measurement map found in \citep{Lam13a}. 

For $P$ an almost perfect matching on a plabic graph $G$ with $e$ edges, let 
\begin{equation}\partial(P) = \{\text{black boundary vertices used in $P$}\} \cup \{\text{white boundary vertices \emph{not} used in $P$}\}\end{equation}
We define the \emph{boundary measurement map} 
\begin{equation}D_G: \complexes^e \rightarrow \mathbb{P}^{{n \choose k}-1}\end{equation}
to be the map which sends 
\begin{math}(t_1,\ldots,t_e)\end{math}
to the point with homogeneous coordinates 
\begin{equation}\Delta_J = \sum_{\partial(P)=J} t^P\end{equation}
where the sum is over all matchings $P$ of $G$, and $t^P$ is the product of the weights of all edges used in $P$.

For positive real edge weights, the boundary measurement map is surjective onto \begin{math}\left( \mathring{\Pi}_G\right)_{\geq 0}\end{math}
but not necessarily injective.  Specializing all but a suitably chosen set of $d$ edge weights to $1$, and letting the remaining edge weights range over $\reals^+$, we obtain a homeomorphism from 
\begin{math}(\reals^+)^d\end{math}
to the positroid cell 
\begin{math}\left( \mathring{\Pi}_G\right)_{\geq 0}\end{math}
in the totally nonnegative Grassmannian.  If instead we let the edge weights range over 
\begin{math} \complexes^{\times},\end{math}
we obtain a map to the open positive variety 
\begin{math} \mathring{\Pi}_G\end{math}
in $\Gr(k,n)$. This map is well-defined on all of 
\begin{math} (\complexes^{\times})^d,\end{math}
and the image is an open dense subset of $\Pi_G$.  Specializing all but an appropriate set of edges to $1$ gives not a homeomorphism, but a birational map \citep{MS14}.

If $G$ is a bridge graph, there is a natural specialization of edge weights, and we have a simple procedure for constructing the desired parametrization. 
Let 
\begin{math} \Pi_G=\Pi_{\langle u, w \rangle_k},\end{math}
and let 
\begin{math}d = \dim (\Pi_G).\end{math} 
 Assign a weight 
\begin{math} t_1,\ldots,t_d\end{math}
to each bridge, in the order the bridges were added, and set all other edge weights to $1$.  Begin with the $k \times n$ matrix in which the columns indexed by $u([k])$ form a copy of the identity, while the remaining columns consist of all $0$'s.  Say the $r^{th}$ bridge is from $a_r$ to $b_r$, with $a_r < b_r$.  When we add the $r^{th}$ bridge to the graph, we multiply our matrix on the right by 
\begin{math} x_{(a_r,b_r)}(\pm t_r),\end{math}
the elementary matrix with nonzero entry $\pm t_r$ in row $a_r$ and column $b_r$.
The sign is negative if
\begin{math} |u([k]) \cap [a_r+1,b_r-1]|\end{math}
is odd, and positive if 
\begin{math} |u([k]) \cap [a_r+1,b_r-1]|\end{math}
is even.

\subsubsection{Example}

Let $n=4$, $k=2$.  Let $w=4321$ and $u = 2143$. Then 
\begin{math} u \leq_2 w,\end{math}
and 
\begin{math} \langle u, w \rangle_2\end{math}
corresponds to the big cell in $\Gr(2,4)$.  We have 
\begin{math} u([2]) = \{1,2\}.\end{math}  The bounded affine permutation corresponding to 
\begin{math} \langle u,w \rangle_2\end{math}
 is given by 
 \begin{math} f_{u,w} = 3456.\end{math}  Starting with the plabic graph corresponding to 
 \begin{math} t_{u([2])} = 5634\end{math}
we successively add bridges 
\begin{equation}(1,4) \rightarrow (2,4) \rightarrow (2,3) \rightarrow (1,2)\end{equation}
and obtain a sequence of bounded affine permutations
\begin{equation}5634 \rightarrow 4635 \rightarrow 4536 \rightarrow 4356 \rightarrow 3456.\end{equation}
The corresponding bridge graph is shown in Figure \ref{bridgegraph}.

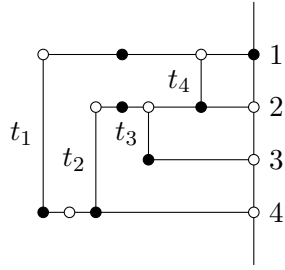
\begin{figure}[ht]
\centering
\begin{tikzpicture}[scale = 0.7]
\draw (4,0) -- (4,5);
\draw (4,1) -- (0,1);
\draw (4,2) -- (2,2);
\draw (4,3) -- (1,3);
\draw (4,4) -- (0,4);
\edge{0}{1}{3}; \node[left] at (0,2.5) {$t_1$};
\edge{1}{1}{2}; \node[left] at (1,2) {$t_2$};
\edge{2}{2}{1}; \node[left] at (2,2.5) {$t_3$};
\edge{3}{3}{1}; \node[left] at (3,3.5) {$t_4$};
\wdot{0.5}{1};
\wdot{4}{1};
\wdot{4}{2};
\wdot{4}{3};
\bdot{4}{4};
\bdot{1.5}{4};
\bdot{1.5}{3};
\foreach \x in {1,...,4}
	{\pgfmathtruncatemacro{\label}{5 - \x}
	 \node [right] at (4.1, \x) {\label};} 
\end{tikzpicture}
\captionsetup{singlelinecheck=off}
\caption[.]{This weighted bridge graph corresponds to the sequence of bridges 
\begin{displaymath}(1,4) \rightarrow (2,4) \rightarrow (2,3) \rightarrow (1,2).\end{displaymath}
}
\label{bridgegraph}
\end{figure}

We next construct the corresponding matrix parametrization.  Since 
\begin{math}u([2]) = \{1,2\},\end{math}
we start with the matrix which has a copy of the identity in its first two columns, and multiply on the right by a factor 
\begin{math} x_{(a_i,b_i)}(\pm t_i)\end{math}
for each bridge $(a_i,b_i)$.  So we have
\begin{equation}
\begin{bmatrix}
1 & 0 & 0 & 0\\
0 & 1 & 0 & 0
\end{bmatrix}
\begin{bmatrix}
1 & 0 & 0 & -t_1\\
0 & 1 & 0 & 0 \\
0 & 0 & 1 & 0 \\
0 & 0 & 0 & 1
\end{bmatrix}
\begin{bmatrix}
1 & 0 & 0 & 0\\
0 & 1 & 0 & t_2 \\
0 & 0 & 1 & 0 \\
0 & 0 & 0 & 1
\end{bmatrix}
\begin{bmatrix}
1 & 0 & 0 & 0\\
0 & 1 & t_3 & 0 \\
0 & 0 & 1 & 0 \\
0 & 0 & 0 & 1
\end{bmatrix}
\begin{bmatrix}
1 & t_4 & 0 & 0 \\
0 & 1 & 0 & 0 \\
0 & 0 & 1 & 0 \\
0 & 0 & 0 & 1
\end{bmatrix}
=
\begin{bmatrix}
1 & t_4 & 0 & -t_1\\
0 & 1 & c_3 & t_2 
\end{bmatrix}
\end{equation}

\subsection{Distinguished subexpressions}

\emph{Distinguished subexpressions} were first introduced by Deodhar, who used them to define a class of cell decompositions of the flag variety \citep{Deo85}.  Let $\mf{w}=s_{i_1}\cdots s_{i_m}$ be a reduced word for a permutation $w \in S_n$.  We gather some facts about distinguished subexpressions, borrowing most of our conventions from \citep[Section 3.6]{TW13}.  A \emph{subexpression} $\mf{u}$ of $\mf{w}$ is obtained by replacing some of the factors $s_{i_j}$ of $\mf{w}$ with the identity permutation, which we denote by $1$.  

We write \begin{math}\mf{u} \preceq \mf{w}\end{math} to indicate that $\mf{u}$ is a subexpression of $\mf{w}$.  We denote the $t^{th}$ factor of $\mf{u}$,  which may be either $1$ or a simple transposition, by $u_t$, and write $u_{(t)}$ for the product \begin{math}u_1u_2\ldots u_t\end{math}.  For notational convenience, we set \begin{math}u_0 = u_{(0)} = 1\end{math}.  We denote the the $t^{th}$ simple transposition in $\mf{u}$ by $u^t$.  

\begin{defn}A subexpression $\mf{u}$ of $\mf{w}$ is called \textbf{distinguished} if we have
\begin{equation}u_{(j)} \leq u_{(j-1)}s_{i_j}\end{equation}
for all \begin{math}1 \leq j \leq m\end{math}.
\end{defn}

\begin{defn}A distinguished subexpression $\mf{u}$ of $\mf{w}$ is called \textbf{positive distinguished} if 
\begin{equation}u_{(j-1)} < u_{(j-1)} s_{i_j}\end{equation}
for all \begin{math}1 \leq j \leq m\end{math}
We will sometimes abbreviate the phrase ``positive distinguished subexpression'' to \emph{PDS}.
\end{defn}

Given a subexpression \begin{math}\mf{u} \preceq \mf{w}\end{math}, we say $\mf{u}$ is \emph{a subexpression for} \begin{math}u= u^1u^2\cdots u^r \end{math}.  By abuse of notation, we identify the subexpression $\mf{u}$ with the word \begin{math} u^1u^2\cdots u^{r}\end{math}, also denoted $\mf{u}$.  If the subexpression $\mf{u}$ is positive distinguished, then the corresponding word of $u$ is reduced.

The following lemma is an easy consequence of the above definitions \citep[Lemma 3.5]{MR04}.

\begin{lem}
Let \begin{math} u \leq w \in S_n,\end{math} and let $\mf{w}$ be a reduced word for $w$.  Then the following are equivalent
\begin{enumerate}
\item $\mf{u}$ is a positive distinguished subexpression of $\mf{w}$,
\item $\mf{u}$ is the lexicographically first subexpression for $u$ in $\mf{w}$, working from the right.
\end{enumerate}
In particular, there is a unique PDS for $u$ in $\mf{w}$.
\end{lem}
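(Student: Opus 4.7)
My plan is to prove both directions of the equivalence simultaneously by induction on $m = \ell(w)$. The key observation is that the rightmost factor $u_m$ of any PDS is forced by a single Bruhat-order dichotomy on $s := s_{i_m}$, and the same forcing applies to the lex-first-from-the-right subexpression, which I interpret as the output of a greedy right-to-left algorithm. The base case $m = 0$ is trivial: $w = 1$ forces $u = 1$, and the empty subexpression is both a PDS and the lex-first-from-the-right subexpression.

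For the inductive step, let $\mf{w}' = s_{i_1}\cdots s_{i_{m-1}}$, a reduced word for $w' = ws$. Since $ws < w$ and $u \leq w$, the lifting property of Bruhat order yields exactly one of two cases: either $us > u$ and $u \leq w'$, or $us < u$ and $us \leq w'$. Suppose $\mf{u}$ is any PDS for $u$ in $\mf{w}$. In the first case, $u_m = s$ is incompatible with the PDS condition at $j = m$: it would force $u_{(m-1)} = us$ and hence $u_{(m-1)} s = u < us = u_{(m-1)}$, contradicting $u_{(m-1)} < u_{(m-1)} s$. So $u_m = 1$, $u_{(m-1)} = u$, and $(u_1, \ldots, u_{m-1})$ is a PDS for $u$ in $\mf{w}'$. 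In the second case, the symmetric argument forces $u_m = s$, and $(u_1, \ldots, u_{m-1})$ is a PDS for $us$ in $\mf{w}'$. Either way, the inductive hypothesis applied to $\mf{w}'$ yields existence and uniqueness of the PDS for $u$ in $\mf{w}$.

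To identify this PDS with the lex-first-from-the-right subexpression, I interpret the latter as the output of the greedy right-to-left algorithm: initialize a running target $v := u$, and for $j = m, m-1, \ldots, 1$, set $u_j = s_{i_j}$ and $v \leftarrow v s_{i_j}$ whenever $v s_{i_j} < v$, and $u_j = 1$ otherwise. The algorithm's first choice ($u_m = s$ iff $us < u$) exactly matches the forcing above, and the recursion coincides with the induction on $\mf{w}'$, so the greedy output is the unique PDS. The main obstacle is confirming that the running target $v$ always stays Bruhat-bounded by the remaining prefix of $\mf{w}$, so that the algorithm is well-defined at each step; this is precisely what the lifting property guarantees, invoked at every inductive step.
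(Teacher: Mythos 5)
Your argument is correct, but note that the paper does not prove this lemma at all: it is imported from Marsh and Rietsch \citep[Lemma 3.5]{MR04} and described as an easy consequence of the definitions, so the comparison is with the standard argument rather than with a proof in the text. Your route---induction on $\ell(w)$, peeling off the last letter $s=s_{i_m}$---is essentially that standard argument, and it works: the positivity condition at $j=m$ forces $u_m=1$ when $us>u$ and $u_m=s$ when $us<u$; the right-handed lifting property supplies exactly the Bruhat hypothesis ($u\le ws$, respectively $us\le ws$) needed to apply the inductive hypothesis to $\mf{w}'=s_{i_1}\cdots s_{i_{m-1}}$; and the greedy right-to-left algorithm makes the identical forced choice at position $m$, so the two recursions coincide and existence, uniqueness, and the identification all come out of one induction. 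Two small points deserve to be made explicit. First, the paper defines a PDS as a \emph{distinguished} subexpression satisfying the positivity condition, while your inductive step only verifies positivity; this is harmless, since at a position where the letter is omitted positivity reads $u_{(j)}=u_{(j-1)}<u_{(j-1)}s_{i_j}$ and at a position where it is used one has $u_{(j)}=u_{(j-1)}s_{i_j}$, so the distinguished condition is automatic---but you should say so. Second, your reading of condition (2) as the greedy right-to-left algorithm is the same as the paper's own gloss immediately following the lemma, and that is precisely the statement your proof establishes; taken completely literally, ``lexicographically first among all subexpressions for $u$'' would be delicate, because non-reduced subexpressions can occupy positions farther to the right (for $u=1$ in $\mf{w}=s_1s_2s_1$ the subexpression in positions $1$ and $3$ multiplies to the identity), so the identification should be understood as being with the rightmost \emph{reduced} (greedy) subexpression, which is what both you and the paper intend.
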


Condition 2 means precisely that the factors $u^t$ are chosen greedily, as follows.  Suppose $\ell(u) = r$.  Working from the right, we set \begin{math}u^r = s_{i_j},\end{math} where $j$ is the largest index such that \begin{math}s_{i_j} \leq_{(l)} u.\end{math}  We then take $u^{r-1}$ to be the next rightmost factor of $\mf{w}$ such that \begin{math}u^{r-1}u^r \leq_{(l)} u\end{math} and so on, until we have \begin{math} u^1u^2\cdots u^r = u\end{math}.  

\subsection{Wiring diagrams, PDS's, and bridge diagrams}
\label{diagrams}

\emph{Wiring diagrams} provide a way to represent words in $S_n$ visually.  For convenience, we implicitly view all wiring diagrams as embedded in a coordinate system, with the $x$-coordinate increasing from left to right and the $y$-coordinate increasing from top to bottom.

Let $w$ in $S_n$, and fix a word $\mf{w}$ of $w$.  The wiring diagram for $\mf{w}$ has $n$ wires which run from left to right, with some number of crossings between them.  No two crossings have the same $x$-coordinate, and no more than two wires are involved in each crossing.  We number the right and left endpoints of the wires respectively from top to bottom.

Each crossing between two wires represents a simple transposition $s_i$ of $\mf{w}$, where $i-1$ is the number of wires in the diagram which pass directly above the crossing.  However, the crossings in the diagram for $\mf{w}$ appear in the opposite order as the simple generators in the word $\mf{w}$; the leftmost generator in $\mf{w}$ corresponds to the rightmost crossing in the diagram.  With these conventions, if $w(s)=t$, the wire with left endpoint $s$ has right endpoint $t$. A wiring diagram is \emph{reduced} if no two wires cross more than once; this occurs if and only if the word $\mf{w}$ is reduced. For an example of a reduced wiring diagram, see Figure \ref{wiringB}.  

We now define \emph{bridge diagrams}, which will be an essential tool in our proof of Theorem \ref{main}.  Let $\mf{y}$ be a word in $S_n$, not necessarily reduced, and let $\mf{x}$ be a subword of $\mf{y}$.  (We reserve the term \emph{subexpression} and the symbol $\preceq$ for subwords of reduced words.)  Then we may obtain a wiring diagram for $\mf{y}$ by starting with a wiring diagram for $\mf{x}$, and inserting new crossings corresponding to the remaining letters of $\mf{y}$.  To draw the \emph{bridge diagram} corresponding to the subword $\mf{x}$ of $\mf{y}$, we take a wiring diagram for $\mf{x}$, and draw dashed crosses between adjacent wires to represent the additional crossings from $\mf{y}$.  We call these dashed crosses \emph{bridges}, by analogy with bridge graphs.  For an example, see Figure \ref{wiringA}.  In the case where $\mf{y}$ is a reduced word, and $\mf{x}$ a subexpression of $\mf{y}$, we will sometimes write \begin{math} \mf{x} \preceq \mf{y}\end{math} to denote the bridge diagram corresponding to this subexpression.  Note that bridge diagrams are distinct from bridge graphs.  However, the proof of our main result shows that the two are intimately related.

\begin{figure}[ht]
\begin{subfigure}[t]{0.4\textwidth}
\begin{tikzpicture}[scale = 0.8]
\foreach \x in {0,...,3}
	{\pgfmathtruncatemacro{\label}{4 - \x}
	 \node [left]  at (0,\x) {\label};
	 \node [right] at (6, \x) {\label};} 
\btm{0}{0}; \brd{0}{1}; \tp{0}{2};
\btm{1}{0}; \btm{1}{1}; \crsg{1}{2};
\btm{2}{0}; \brd{2}{1}; \tp{2}{2};
\crsg{3}{0}; \tp{3}{1}; \tp{3}{2};
\btm{4}{0}; \brd{4}{1}; \tp{4}{2};
\btm{5}{0}; \btm{5}{1}; \brd{5}{2};
\end{tikzpicture}
\caption{The solid lines give the wiring diagram for $\mathbf{u}$.  The dashed crosses represent bridges.}
\label{wiringA}
\end{subfigure}
\hspace{0.5 in}
\begin{subfigure}[t]{0.4\textwidth}
\begin{tikzpicture}[scale = 0.8]
\foreach \x in {0,...,3}
	{\pgfmathtruncatemacro{\label}{4 - \x}
	 \node [left]  at (0,\x) {\label};
	 \node [right] at (6, \x) {\label};} \btm{0}{0}; \crsg{0}{1}; \tp{0}{2};
\btm{1}{0}; \btm{1}{1}; \crsg{1}{2};
\btm{2}{0}; \crsg{2}{1}; \tp{2}{2};
\crsg{3}{0}; \tp{3}{1}; \tp{3}{2};
\btm{4}{0}; \crsg{4}{1}; \tp{4}{2};
\btm{5}{0}; \btm{5}{1}; \crsg{5}{2};
\end{tikzpicture}
\caption{Replacing each bridge at left with a crossing gives the wiring diagram for $\mathbf{w}$.}
\label{wiringB}
\end{subfigure}
\\

\begin{subfigure}[t]{0.4 \textwidth}
\begin{tikzpicture}[scale = 0.8]
\foreach \x in {0,...,3}
	{\pgfmathtruncatemacro{\label}{4 - \x}
	 \node [left]  at (0,\x) {\label};
	 \node [right] at (6, \x) {\label};} 
\btm{0}{0}; \dmr{0}{1}; \tp{0}{2};
\btm{1}{0}; \btm{1}{1}; \crsg{1}{2};
\btm{2}{0}; \dmr{2}{1}; \tp{2}{2};
\crsg{3}{0}; \tp{3}{1}; \tp{3}{2};
\btm{4}{0}; \dmr{4}{1}; \tp{4}{2};
\btm{5}{0}; \btm{5}{1}; \dmr{5}{2};
\end{tikzpicture}
\caption{To construct the bridge graph corresponding to \begin{math}\mf{u} \preceq \mf{w},\end{math} we first replace each bridge with a dimer, as shown above.}
\label{wiringC}
\end{subfigure}
\hspace{0.5 in}
\begin{subfigure}[t]{0.4\textwidth}
\begin{tikzpicture}[scale = 0.8]
\foreach \x in {0,...,3}
	{\pgfmathtruncatemacro{\label}{4 - \x}
	 \node [right] at (6, \x) {\label};} 
\hdmr{0}{1};
\btm{1}{1}; \bcrsg{1}{2};
\bdmr{2}{1}; \tp{2}{2};
\tcrsg{3}{0}; \tp{3}{1}; \tp{3}{2};
\btm{4}{0}; \tdmr{4}{1}; \tp{4}{2};
\btm{5}{0}; \btm{5}{1}; \dmr{5}{2};
\end{tikzpicture}
\caption{Next, we delete the tail of each wire up the first dimer on that wire.  Adding degree-2 vertices as needed yields the desired plabic graph.}
\label{wiringD}
\end{subfigure}
\captionsetup{singlelinecheck=off}
\caption[.]{Constructing a bridge graph from a bridge diagram.  
In this example, \begin{math}\mathbf{w} = s_1s_2s_3s_2s_1s_2\end{math}, and \begin{math}\mathbf{u} = s_3s_1\end{math}.  In Section \ref{mainproof}, we will show that this process yields a planar graph whenever \begin{math}u \leq_k w\end{math} and \begin{math}\mf{u} \preceq \mf{w}\end{math} is a PDS.}
\label{wiring}
\end{figure}
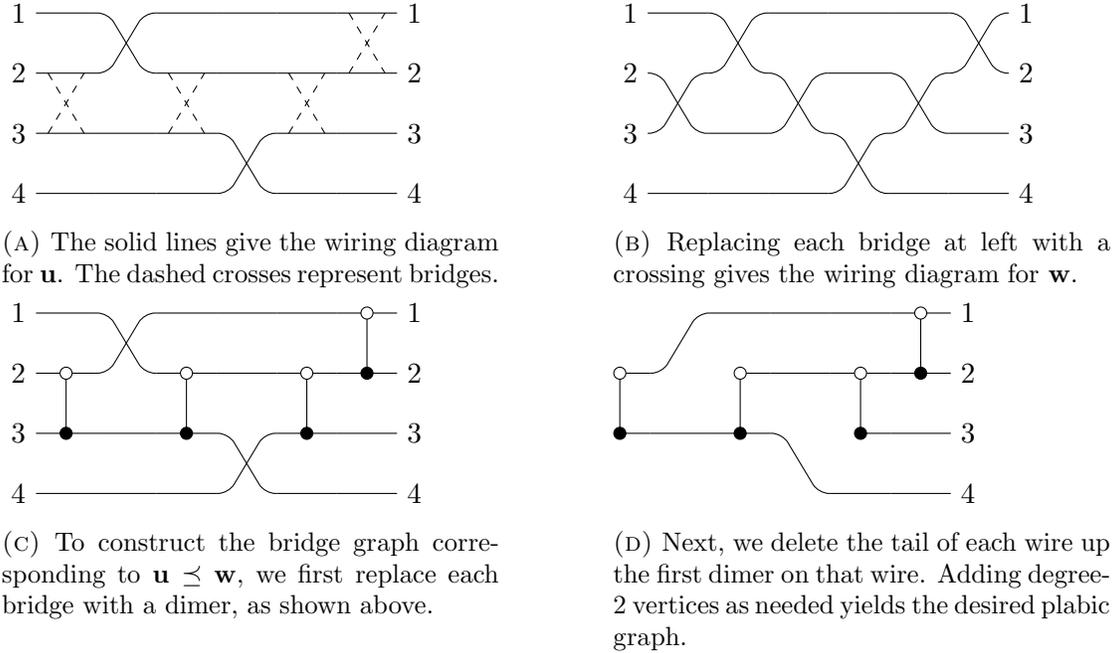

We now translate the notion of a positive distinguished subexpression into the language of bridge diagrams.
Suppose $\mf{w}$ is reduced, and let $\mf{u}$ be the PDS for $u$ in $\mf{w}$. 
We may construct the bridge diagram \begin{math} \mf{u} \preceq \mf{w}\end{math} from the (reduced) wiring diagram $\mf{u}$ by successively inserting bridges, working from 
left to right.

\begin{rmk} \label{nocross} Let $\mf{w}$ be a reduced word, let \begin{math}\mf{u} \preceq \mf{w},\end{math} and suppose $\mf{u}$ is itself reduced.  Consider the bridge corresponding to \begin{math}\mf{u} \preceq \mf{w}.\end{math}  The statement that $\mf{u}$ is a PDS means precisely that each bridge is inserted between two wires in the diagram for $\mf{u}$ which never cross again to the right of the bridge.   See Figure \ref{wiringA} for an example.
\end{rmk}

Indeed, suppose a bridge corresponds to the $j^{th}$ simple transposition $s_{i_j}$ in $\mf{w}$.  Then the fact that the bridge is inserted between wires that do not cross again is equivalent to the fact that \begin{math}u_{(j)} < u_{(j-1)}s_{i_j}.\end{math}

\begin{prop}\label{induce}
Let \begin{math} \mf{u} \preceq \mf{w}\end{math} and consider the corresponding bridge diagram.  Suppose we insert only the leftmost $r$ bridges in the diagram for $\mf{u}.$  (Here ``leftmost" is defined in terms of the diagram, not the word.)  Then replacing these bridges with crossings gives a reduced wiring diagram $\mf{v}$ for some $v \in S_n$, and the underlying diagram $\mf{u}$ represents the PDS for $u$ in $\mf{v}$. \end{prop}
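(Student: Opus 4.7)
The plan is to view $\mf{v}$ as a subword of $\mf{w}$ and exploit the fact that removing crossings from a reduced wiring diagram preserves reducedness, then transfer the positive-distinguished conditions from $\mf{w}$ to $\mf{v}$ position by position.

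First, I would identify $\mf{v}$ concretely: it is the subword of $\mf{w}$ consisting of every letter of $\mf{u}$ together with the $r$ letters of $\mf{w}$ corresponding to the converted bridges. Equivalently, $\mf{v}$ is obtained from $\mf{w}$ by deleting the letters associated with the remaining $m-r$ bridges. In particular $\mf{u}$ is a subword of $\mf{v}$.

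The key geometric step is the following lemma: deleting a single crossing from a reduced wiring diagram yields a reduced wiring diagram. Suppose the deleted crossing occurs at $x$-coordinate $p$ and involves wires $A$ and $B$. In the new diagram the only wires that change are $A$ and $B$, which become $A' = A_{\mathrm{left}} \cup B_{\mathrm{right}}$ and $B' = B_{\mathrm{left}} \cup A_{\mathrm{right}}$. A second crossing between $A'$ and $B'$ would require a crossing between the original $A$ and $B$ either entirely to the left of $p$ or entirely to the right of $p$, both of which are forbidden because the original diagram is reduced and $A,B$ already cross at $p$. For any third wire $C$, each crossing of $C$ with $A$ or $B$ in the original diagram redistributes uniquely to a crossing of $C$ with exactly one of $A'$ or $B'$, so no pair of wires acquires a second crossing. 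Iterating, the deletion of any collection of crossings preserves reducedness, and hence $\mf{v}$ is a reduced wiring diagram for some $v \in S_n$.

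Finally, I verify that $\mf{u} \preceq \mf{v}$ is a PDS. At each position $j$ of $\mf{v}$ that belongs to $\mf{u}$, we have $u_{(j)} = u_{(j-1)} s_{i_j}$, and the positive-distinguished condition reduces to $u_{(j-1)} < u_{(j)}$, which is immediate from the reducedness of $\mf{u}$. At each remaining position $j$ of $\mf{v}$, the letter corresponds to one of the $r$ converted bridges; that same letter appears at the corresponding position of $\mf{w}$ with $u_{(j)} = u_{(j-1)}$, and the condition $u_{(j-1)} < u_{(j-1)} s_{i_j}$ is inherited directly from $\mf{u} \preceq \mf{w}$ being positive distinguished. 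Uniqueness of PDSs then forces $\mf{u}$ to be the PDS for $u$ in $\mf{v}$. The main obstacle, though routine, is the wire-tracing in the crossing-removal lemma; once this is in place, both claims follow.
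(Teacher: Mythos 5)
Your identification of $\mf{v}$ as the subword of $\mf{w}$ obtained by deleting the letters of the unused bridges is correct, and the second half of your argument is fine \emph{conditional on reducedness}: the partial products $u_{(j)}$ are unchanged by deleting non-$\mf{u}$ letters, so positivity (and hence distinguishedness) is inherited, and uniqueness of PDS's finishes it. The genuine gap is in the first half. The lemma you rely on --- that deleting a single crossing from a reduced wiring diagram always yields a reduced wiring diagram --- is false. Take $\mf{w}=s_1s_2s_1$ in $S_3$ and delete the crossing corresponding to the letter $s_2$: the result is the diagram of $s_1s_1$, in which two wires cross twice. The error is in your third-wire step: it is true that each crossing of a wire $C$ with $A$ or $B$ redistributes to a crossing of $C$ with exactly one of the new wires $A'$, $B'$, but a wire $C$ that crosses $A$ once and $B$ once may have \emph{both} of these crossings land on the same new wire, creating a double crossing --- exactly what happens in the example, where the deleted crossing joins the two outer wires and the straight middle wire then crosses one new wire twice.

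Because your reducedness argument uses only the reducedness of $\mf{w}$ (never the hypothesis that $\mf{u}$ is a PDS, nor anything about which crossings are removed), it would prove that every subword of a reduced word is reduced, which is false; and the hypothesis is genuinely needed for the proposition. For instance, in $\mf{w}=s_1s_2s_1$ take the non-positive-distinguished subexpression for $u=s_1$ consisting of the first letter: the bridges sit at word positions $2$ and $3$, and keeping only the leftmost bridge in the diagram produces precisely $s_1s_1$. The paper's proof avoids this by invoking Remark \ref{nocross}: since $\mf{u}\preceq\mf{w}$ is a PDS and $\mf{w}$ is reduced, every bridge is inserted between two wires of the diagram for $\mf{u}$ which never cross again to its right, and it is this property of the retained crossings, not a general deletion lemma, that forces $\mf{v}$ to be reduced (after which the same remark, applied to $\mf{v}$, gives the PDS claim). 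To repair your proof you must replace the deletion lemma with an argument that actually uses this crossing condition on the bridges being converted.
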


\begin{proof}
The fact that $\mf{v}$ is reduced follows from Remark \ref{nocross}, together with the fact that $\mf{w}$ is reduced.  Since $\mf{v}$ is reduced, applying Remark \ref{nocross} again gives the rest of the proposition.
\end{proof}

We introduce some conventions for discussing bridge diagrams.  A bridge diagram is \emph{reduced} if it represents a reduced subword $\mf{u}$ of a reduced word $\mf{w}$.  We say that one crossing occurs \emph{before} another if the first crossing has a smaller $x$-coordinate than the second.  Wires in the underlying diagram for $\mf{u}$ are labeled by their right endpoints, so ``wire $a$" means the wire with right endpoint $a$. Suppose wires $a$ and $b$ cross in the diagram for $\mf{u}$, and suppose wire $a$ lies above $b$ to the left of the crossing, and below $b$ to the right.  We say wire $a$ crosses wire $b$ ``from above," and write \begin{math}(a \downarrow b).\end{math} Similarly, we say $b$ crosses wire $a$ ``from below," and write \begin{math}(b \uparrow a).\end{math}  We say wire $a$ is \emph{isolated} if there are no bridges touching wire $a$.  We refer to a bridge between wires $a$ and $b$ as an $(a,b)$-bridge.  If $a < b$, we call $a$ the \emph{upper wire} of the $(a,b)$-bridge, and call $b$ the \emph{lower wire}.  Note that we may also refer to bridges $(a,b)$ where $a > b$, and $a$ is the lower wire of the bridge.

We use the symbol $\rightarrow$ to indicate that one bridge or crossing occurs before (that is, to the left of) another.  So for example, \begin{math} (a,b) \rightarrow (c,d)\end{math} means there is a $(c,d)$ bridge after the $(a,b)$ bridge.  Similarly, \begin{math} (a,b) \rightarrow (c \downarrow b)\end{math} means that wire $c$ crosses wire $b$ from above after the $(a,b)$-bridge.

\subsection{Deodhar parametrizations of positroid varieties}

We now review Deodhar's decompositions of the flag variety, as well as Marsh and Rietsch's parametrizations of Deodhar components, and their projections to the Grassmannian \citep{Deo85, MR04}.  While these constructions are defined for general flag varieties, we focus on the case of $\mathcal{F}\ell(n)$. Our discussion follows \citep[Section 4]{TW13}, which in turn draws on \citep[Sections 1 and 3]{MR04}.  However, we use a slightly different set of conventions for our matrix representatives.

To construct a Deodhar decomposition of $\mathcal{F}\ell(n)$, we first fix a reduced word $\mf{w}$ for each $w \in S_n$.  These choices determine the decomposition \citep[Section 4]{MR04}. There is a Deodhar component \begin{math}\mathcal{R}_{\mf{u},\mf{w}}\end{math} for each distinguished subexpression \begin{math}\mf{u} \preceq \mf{w}.\end{math}  For a reduced word $\mf{w}$ of $w$, we have
\begin{equation}\mathring{X}_u^w = \bigsqcup_{\mf{u} \preceq \mf{w} \text{ distinguished}}\mathcal{R}_{\mf{u},\mf{w}}.\end{equation}

If \begin{math}u \leq_k w,\end{math} the projection \begin{math} \pi_k:\mathcal{F}\ell(n) \rightarrow \Gr(k,n)\end{math} is an isomorphism on the open Richardson variety \begin{math} \mathring{X}_u^w\end{math} whose image is the open positroid variety \begin{math} \mathring{\Pi}_{\langle u,w\rangle_k}\end{math} \citep{KLS13}.  To define a Deodhar decomposition of the Grassmannian, choose a representative \begin{math}\langle u,w \rangle_k\end{math} for each \begin{math} \langle u,w \rangle_k \in \mathcal{Q}(k,n),\end{math} and a reduced word $\mf{w}$ for each chosen $w$.  
For each of the selected \begin{math} u \leq_k w\end{math}, and each distinguished subexpression \begin{math} \mf{u} \preceq \mf{w},\end{math} the \emph{Deodhar component} of $\Gr(k,n)$ corresponding to \begin{math}\mf{u} \preceq \mf{w}\end{math} is given by 
\begin{equation} \mathcal{D}_{\mf{u},\mf{w}} := \pi_k(\mathcal{R}_{\mf{u},\mf{w}}).\end{equation}

Composing the parametrization of 
\begin{math} \mathcal{R}_{\mf{u},\mf{w}}\end{math} from \citep{MR04} with the projection $\pi_k$ gives a parametrization of \begin{math}\mathcal{D}_{\mf{u},\mf{w}}\end{math}.  For \begin{math}\mf{u} \preceq \mf{w}\end{math} a PDS, the component \begin{math} {D}_{\mf{u},\mf{w}}\end{math} is a dense open subset of \begin{math}\mathring{\Pi}_{\langle u,w\rangle_k},\end{math} so we obtain a parametrization of this positroid variety.  

We denote the elementary matrix with non-zero entry $t$ at position $(i,j)$ by \begin{math}x_{(i,j)}(t)\end{math} if $i < j$, and by \begin{math}y_{(i,j)}(t)\end{math} if $i > j.$ 
For \begin{math} 1 \leq i \leq n-1,\end{math} we write $\dot{s_i}$ for the matrix obtained from the $n \times n$ identity by replacing the $2 \times 2$ block whose upper left corner is at position $(i,i)$ with the block matrix

\begin{equation}\begin{bmatrix}
0 & -1\\
1 & 0
\end{bmatrix}\end{equation}

Let \begin{math} \mf{u} \preceq \mf{w},\end{math} where \begin{math}\mf{w} = s_{i_1}\cdots s_{i_m}.\end{math}  We define $g_j$ for \begin{math}1 \leq j \leq m\end{math} as follows, where the $t_j$ and $p_j$ are parameters.

\begin{equation}g_j =\left\{ 
\begin{array}{ll}
x_{i_j}(t_j) & \text{if $s_{i_j}$ is not in $\mf{u}$}\\
\dot{s_{i_j}}^{-1} & \text{if $s_{i_j}$ is in $\mf{u}$ and $\ell(u_{(j)}) > \ell(u_{(j-1)})$}\\
\dot{s_{i_j}}y_{i_j}(p_j) & \text{if $s_{i_j}$ is in $\mf{u}$ and $\ell(u_{(j)})<\ell(u_{(j-1)})$}
\end{array} 
\right\}\end{equation}
Note that if $\mf{u}$ is a PDS of $\mf{w}$, the third case never occurs. Next, we define the set
\begin{equation}G_{\mf{u},\mf{w}} :=\{ g_{m}g_{m-1}\ldots g_1 \in \text{GL}(n) \mid t_j \in \complexes^{\times}, p_j \in \complexes\}\end{equation}
For \begin{math} u = w = 1\end{math}, we set \begin{math} \mf{G_{u,w}} = 1.\end{math}  
The projection from \begin{math} G_{\mf{u},\mf{w}}\end{math} to $\mathcal{F}\ell(n)$ gives a homeomorphism onto the Deodhar component corresponding to \begin{math}\mf{u} \preceq \mf{w}.\end{math}  Let 
\begin{equation}s = |\{j \mid \text{$s_{i_j}$ is in $\mf{u}$ and $\ell(u_{(j)})<\ell(u_{(j-1)})$}\}|\end{equation}
\begin{equation}r = |\{j \mid  \text{$s_{i_j}$ is not in $\mf{u}$}\}|\end{equation}
Then the obvious map \begin{math}\complexes^s \times (\complexes^{\times})^r \rightarrow G_{\mf{u},\mf{w}}\end{math} gives a parametrization of the Deodhar component \begin{math} \mathcal{R}_{\mf{u},\mf{w}}\end{math} \citep{MR04}.

As mentioned previously, our conventions differ from those of \citep{MR04} and \citep{TW13}.  In particular, Talaska and Williams view $\mathcal{F}\ell(n)$ as $GL(n)$ modulo the right action of the group of upper-triangular matrices, while we quotient by the left action of the lower-triangular matrices.  Hence our matrix parametrizations are transposed with respect to theirs.  In addition, we let $\dot{s_i}$ represent the matrix with a nonzero entry at position $(i,i+1)$, rather than at position $(n-i,n-i+1)$.  This corresponds to interchanging the roles of $k$ and $n-k$.
Despite these superficial differences, our set-up is essentially equivalent to the one in \citep{TW13}.

The matrices $\dot{s_i}$ and $x_{(a,b)}$ satisfy the following relation, which may be checked directly.
\begin{lem}
Suppose \begin{math} s_i(a) < s_i(b).\end{math}  Then
\begin{equation} \label{signs} \dot{s_i}x_{(a,b)}(t)\dot{s_i}^{-1}=
\begin{cases}
x_{(s_i(a),s_i(b))}(-t)& i \in \{a-1,b-1\}\\
x_{(s_i(a),s_i(b))}(t)& \text{otherwise}
\end{cases}
\end{equation}
\end{lem}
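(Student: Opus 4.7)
The proof is a direct computation by cases, as the author suggests, so my plan is to set up the right algebraic framework and then let the cases take care of themselves.

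First, I would observe that the matrix $\dot{s_i}$ is orthogonal: the nontrivial $2 \times 2$ block $\bigl(\begin{smallmatrix} 0 & -1 \\ 1 & 0 \end{smallmatrix}\bigr)$ has transpose equal to its inverse, so $\dot{s_i}^T = \dot{s_i}^{-1}$. Writing $x_{(a,b)}(t) = I + t\, e_a e_b^T$, this lets me rewrite the conjugate cleanly as
\begin{equation*}
\dot{s_i}\, x_{(a,b)}(t)\, \dot{s_i}^{-1} = I + t\, (\dot{s_i} e_a)(\dot{s_i} e_b)^T.
\end{equation*}

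Next, I would record the action of $\dot{s_i}$ on standard basis vectors, which is the whole engine of the calculation: $\dot{s_i} e_i = e_{i+1}$, $\dot{s_i} e_{i+1} = -e_i$, and $\dot{s_i} e_c = e_c$ for $c \notin \{i, i+1\}$. Compactly, $\dot{s_i} e_c = \varepsilon(c) e_{s_i(c)}$, where $\varepsilon(c) = -1$ if $c = i+1$ and $\varepsilon(c) = +1$ otherwise. Substituting, the right-hand side becomes $I + t\, \varepsilon(a)\varepsilon(b)\, e_{s_i(a)} e_{s_i(b)}^T$, which is the elementary matrix $x_{(s_i(a), s_i(b))}\bigl(\varepsilon(a)\varepsilon(b) t\bigr)$ provided $s_i(a) \neq s_i(b)$ (which always holds since $a \ne b$).

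Finally, I would verify the sign rule. The hypothesis $s_i(a) < s_i(b)$ rules out the pair $(a,b) = (i, i+1)$, since there $s_i(a) = i+1 > i = s_i(b)$. In every remaining configuration of $\{a,b\}$ relative to $\{i, i+1\}$, at most one of $a, b$ equals $i+1$, so $\varepsilon(a)\varepsilon(b) = -1$ iff exactly one of $a, b$ equals $i+1$, i.e.\ iff $i \in \{a-1, b-1\}$. This matches the case split in \eqref{signs} exactly.

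There is no genuine obstacle here; the only thing to be careful about is not double-counting signs when both $a$ and $b$ are adjacent to $i$. The orthogonality observation at the start makes this transparent by reducing the whole lemma to tracking the single sign $\varepsilon(a)\varepsilon(b)$.
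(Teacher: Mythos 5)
Your proof is correct: the rank-one formulation $x_{(a,b)}(t) = I + t\,e_a e_b^T$ together with $\dot{s_i}e_c = \varepsilon(c)e_{s_i(c)}$ and orthogonality of $\dot{s_i}$ gives exactly the stated sign rule, and your handling of the excluded case $(a,b)=(i,i+1)$ is right. The paper offers no argument beyond asserting the relation "may be checked directly," so your write-up is simply a clean, organized version of that same direct verification.
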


Let \begin{math} \mf{u} \preceq \mf{w}\end{math} be a PDS, where \begin{math} \ell(w) = m.\end{math} For \begin{math}1 \leq j \leq m,\end{math} let 
\begin{equation}\dot{u_j}=\begin{cases}
\dot{s_{i_j}} & s_{i_j} \in \mf{u}\\
1 & \text{otherwise}
\end{cases}\end{equation}

 Let \begin{math}d = \ell(w)-\ell(u),\end{math} and let \begin{math}j_1,\ldots,j_d\end{math} be the indices corresponding to simple transpositions which are not in $\mf{u}$.  
 For $1 \leq r \leq d$, define
 \begin{equation}\bar{r} = d+1-r\end{equation}
  and set
 \begin{equation}\beta_{\bar{r}} = (\dot{u_1}\cdots \dot{u}_{j_r-1}) (x_{i_{j_r}}(t_{j_r}))(\dot{u}_{j_r-1}^{-1} \cdots \dot{u_1}^{-1}).\end{equation}
 Then we can rewrite each \begin{math} G \in G_{\mf{u},\mf{w}}\end{math} in the form
\begin{equation}\label{betas}G = (\dot{u}_m^{-1}\dot{u}_2^{-1} \cdots \dot{u}_1^{-1})(\beta_1\beta_2 \cdots \beta_d)\end{equation}

 \begin{lem}
 For $\beta_r$ as above, define:
 \begin{align} a &= u_{(j_r-1)}(i_{j_r})\\
 b &= u_{(j_r-1)}(i_{j_r}+1)\\
 \theta &= |u([k]) \cap [a+1,b-1])|
 \end{align}
 Then we have
 \begin{equation} \beta_{\bar{r}} = x_{(a,b)}((-1)^{\theta} t_{j_r}).\end{equation}
 \end{lem}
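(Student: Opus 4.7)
The proof is by direct computation: I will expand $\beta_{\bar r}$ as an iterated conjugation and repeatedly apply the relation \eqref{signs}. Write
$$\beta_{\bar r} = \dot u_1 \bigl(\dot u_2 \bigl( \cdots \bigl( \dot u_{j_r - 1}\, x_{i_{j_r}}(t_{j_r})\, \dot u_{j_r - 1}^{-1} \bigr) \cdots \bigr) \dot u_2^{-1} \bigr) \dot u_1^{-1},$$
and induct from the innermost conjugation outward. The inductive hypothesis is that after conjugating by $\dot u_{j_r - 1}, \ldots, \dot u_{j'}$, the matrix has the form $x_{(p_{j'}, q_{j'})}(\epsilon_{j'} t_{j_r})$ with $p_{j'} < q_{j'}$ and $\epsilon_{j'} \in \{\pm 1\}$, where $(p_{j'}, q_{j'}) = (u_{j'} u_{j'+1} \cdots u_{j_r - 1})(i_{j_r},\, i_{j_r} + 1)$.

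Two things must be checked before \eqref{signs} can be applied at each step. First, the ordering $p_{j'} < q_{j'}$: since $\mathbf{u}$ is a PDS, $u_{(j_r - 1)} < u_{(j_r - 1)} s_{i_{j_r}}$, and a length-additivity argument (in the spirit of Proposition \ref{induce}) shows that the tail $v_{j'} := u_{j'} \cdots u_{j_r - 1}$ inherits the relation $v_{j'} < v_{j'} s_{i_{j_r}}$; applied to $(i_{j_r}, i_{j_r} + 1)$, this yields $p_{j'} < q_{j'}$. Second, we need $(p_{j'}, q_{j'}) \neq (i_{j'-1}, i_{j'-1} + 1)$, so that \eqref{signs} produces an $x$-type matrix rather than a $y$-type one. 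Interpreting $(p_{j'}, q_{j'})$ as the positions of the two bridge wires (labeled $a$ and $b$) at step $j' - 1$ of the wiring diagram for $\mathbf{u}$, equality would force these wires to cross at letter $j' - 1$; since all letters under consideration lie strictly to the right of the bridge at $j_r$, this is ruled out by Remark \ref{nocross} combined with reducedness of $\mathbf{u}$.

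Carrying the induction to $j' = 0$ yields $\beta_{\bar r} = x_{(a, b)}((-1)^N t_{j_r})$, where $N$ is the total number of sign flips accrued. It remains to show $N \equiv \theta \pmod 2$. By \eqref{signs}, a sign flip occurs at step $j'$ exactly when the crossing $s_{i_{j'}}$ moves one of the bridge wires from the lower position $i_{j'} + 1$ to the upper position $i_{j'}$. Tracking the wires labeled $c \in [a+1, b-1]$ as we move rightward from step $j_r - 1$ (where $a, b$ sit at positions $i_{j_r}, i_{j_r}+1$) to step $0$ (where they sit at positions $a, b$), each such $c$ must either enter the strip between $a$ and $b$ from above, from below, or cross both $a$ and $b$; a short case analysis shows
$$N \equiv |u_{(j_r - 1)}([i_{j_r} - 1]) \cap [a+1, b-1]| \pmod 2,$$
i.e., $N$ counts (mod $2$) those wires labeled in $[a+1, b-1]$ that sit above the bridge at step $j_r - 1$.

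The main obstacle is the final identification, that this count agrees mod $2$ with $|u([k]) \cap [a+1, b-1]|$. I expect to prove the sharper statement that, for each $c \in [a+1, b-1]$, the wire labeled $c$ is above the bridge at step $j_r - 1$ if and only if it lies in the top $k$ rows at step $m$ (left end of the diagram). Reducedness of $\mathbf{u}$ means each such $c$ crosses each of $a$ and $b$ at most once, and Remark \ref{nocross} localizes the possible crossings to the left region; combining this with the Bergeron-Sottile inequalities for $u \leq_k w$ (Theorem \ref{kcomp}) to control where wire $c$ can sit relative to wires $a, b$ at step $m$ should force the claimed equivalence, completing the proof.
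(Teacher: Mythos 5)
Your first two stages are sound and essentially reproduce the paper's own argument. The iterated conjugation with the ordering check $p_{j'}<q_{j'}$ is exactly the paper's ``standard Coxeter argument'': since $\mathbf{u}$ is reduced, the relation $\ell(u_{(j_r-1)}s_{i_{j_r}})>\ell(u_{(j_r-1)})$ passes to the tails $u_{j'}\cdots u_{j_r-1}$ by length-additivity, so \eqref{signs} applies at every step; your separate ``second check'' is redundant, since the hypothesis $s_i(a)<s_i(b)$ of \eqref{signs} is precisely the statement that the ordering is preserved, i.e.\ the positions are never $(i,i+1)$. Likewise your cancellation bookkeeping, giving $N\equiv|u_{(j_r-1)}([i_{j_r}-1])\cap[a+1,b-1]|\pmod 2$, matches the paper's observation that after cancellation one sign flip remains for each wire with right endpoint in $[a+1,b-1]$ lying above the bridge.

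The gap is the final identification, which you yourself label ``the main obstacle'' and only promise to prove: that for each $c\in[a+1,b-1]$ the wire labeled $c$ lies above the bridge at slice $j_r-1$ if and only if $c\in u([k])$. This is the real content of the lemma, and it is exactly what the paper supplies via Lemma \ref{escape} (resting on Lemma \ref{last}). Your proposed substitutes do not suffice as described. Remark \ref{nocross} only says the two bridge wires $a,b$ never cross each other to the right of the bridge; it says nothing about where a third wire $c$ crosses them, so it does not ``localize'' the crossings you need (what does localize them is reducedness plus the endpoint analysis you already used). More seriously, Theorem \ref{kcomp} applied to the single pair $(u,w)$ constrains the two ends of the diagram but not the configuration at the bridge slice: the paper's Lemma \ref{last} must compare $u$ with the intermediate permutations $w'$ obtained by adjoining only the bridges to the left of (and including) the one under consideration and apply Theorem \ref{kcomp} to $u\le_k w'$, and Lemma \ref{escape} then propagates the conclusion from the wire carrying a first bridge to the wire $c$ by an iteration over wires. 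Even getting $u\le_k w'$ for these intermediate $w'$ is not free: it uses the reduction to anti-Grassmannian $u$ (Lemmas \ref{antiG} and \ref{korder}), a step entirely absent from your sketch. Without this machinery, or a worked-out replacement for it, the congruence $N\equiv\theta\pmod 2$ is asserted rather than proved, so the proposal as it stands does not establish the lemma.
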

 
 \begin{proof}
 Since $\mf{u}$ is a PDS of $\mf{w}$, we have 
 \begin{equation} \ell(u_{(j_r-1)}s_{i_{j_r}}) > \ell(u_{(j_r-1)}).\end{equation}
 From standard Coxeter arguments, it follows that for each \begin{math}1 \leq q \leq m,\end{math} we have
 \begin{equation}u_q(u_{q+1}\cdots u_{j_r-1}(i_{j_r})) < u_q(u_{q+1}\cdots u_{j_r-1}(i_{j_r}+1))\end{equation}
 Hence we can apply \eqref{signs} repeatedly, to obtain \begin{math}\beta_{\bar{r}} = x_{(a,b)}(\pm t_{j_r}).\end{math}
 
Next, we compute the sign of the parameter \begin{math} \pm t_{j_r}.\end{math}  In the language of bridge diagrams, \eqref{signs} implies that
we multiply the parameter $t_{j_r}$ by a factor of $-1$ for each wire $c$ in the diagram for $\mf{u}$ such that \begin{math}(c \downarrow a)\end{math} after the $(a,b)$ bridge; and one for each wire such $c'$ such that \begin{math}(c' \downarrow b)\end{math} after the $(a,b)$ bridge.   After canceling, this yields a factor of $-1$ for each wire which crosses wire $a$ from above after the $(a,b)$ bridge, and whose right endpoint is between $a$ and $b$.  By Lemma \ref{escape} below, these are precisely the wires $c$ with \begin{math}c \in \{u([k]) \cap [a+1,b-1]\}\end{math}.  The claim follows. \end{proof}

For notational convenience, we renumber our parameters $t_{i_j}$ and define $a_r,b_r$ such that 
\begin{equation} \beta_r = x_{(a_r,b_r)}(\pm t_r)\end{equation}
where the sign is determined as above.
It follows from the proposition that if the simple transposition $s_{i_{j_r}}$ corresponds to an $(a,b)$ bridge in the diagram for \begin{math}\mf{u} \preceq \mf{w}\end{math}, then \begin{math}\beta_{\bar{r}}=x_{(a,b)}(\pm t_{\bar{r}})\end{math}.  Note that the $\beta_r$ appear in \eqref{betas} in the same order as the corresponding bridges $(a_r,b_r)$ in the diagram for \begin{math}\mf{u} \preceq \mf{w}\end{math} (and hence in the opposite order as the factors $s_{i_{j_r}}$ in the word for $\mf{w}$).

\subsubsection{Example continued}

As before, let $n=4$, $k=2$.  Let \begin{math} \mf{w} = s_1s_2s_3s_2s_1s_2 \in S_4,\end{math} and let $u=2143$.   The positive distinguished subexpression $\mf{u}$ for $u$ in $\mf{w}$ comprises the $s_3$ in position $3$ from the left, and the $s_1$ in position $5$, so we have a parametrization of \begin{math} \mathcal{R}_{\mf{u},\mf{w}}\end{math} by
\begin{equation}G_{\mf{u},\mf{w}}=x_2(t_1)\dot{s_1}^{-1}x_2(t_2)\dot{s_3}^{-1}x_2(t_3)x_1(t_4).\end{equation}
Rewriting this in the form \eqref{betas} and projecting to $\Gr(2,4),$ we obtain
\begin{equation}\begin{bmatrix}
1 & 0 & 0 & 0\\
0 & 1 & 0 & 0
\end{bmatrix}
\dot{s_1}^{-1} \dot{s_3}^{-1}x_{(1,4)}(-t_1)x_{(2,4)}(t_2)x_2(t_3)x_1(t_4)=\begin{bmatrix}
0 & 1 & t_3 & t_2\\
-1 & -t_4 & 0 & t_1
\end{bmatrix}
\end{equation}

Re-ordering the rows and multiplying the first row by $-1$, gives
\begin{equation}\begin{bmatrix}
1 & t_4 & 0 & -t_1\\
0 & 1 & t_3 & t_2
\end{bmatrix}\end{equation}
which is precisely the matrix we obtained from a bridge decomposition corresponding to \begin{math}\mf{u} \preceq \mf{w}.\end{math}  So these two parametrizations yield the same point in the Grassmannian, for each choice of parameters \begin{math}(t_1,t_2,t_3,t_4).\end{math} 

\section{From PDS's to bridge graphs}
\label{mainproof}

We next outline a method which produces a bridge graph corresponding to a projected Deodhar parametrization.  While the method is straightforward, proving that it yields a  bridge graph requires considerable work.

Let \begin{math}\mf{w}=s_{i_1}s_{i_2}\cdots s_{i_m}\end{math} be a reduced word for $w$, let \begin{math}u \leq_k w,\end{math} and let \begin{math}\mf{u}\preceq \mf{w}\end{math} be the PDS for  $u$ in $\mf{w}$.  
Let \begin{math} j_1,\ldots, j_d\end{math} be the indices of the simple transpositions of $\mf{w}$ which are not in $\mf{u}$, and for \begin{math}1 \leq r \leq d,\end{math} let \begin{math}\bar{r}=d+1-r.\end{math}  For \begin{math}1 \leq r \leq d,\end{math} let 
\begin{equation}(a_{\bar{r}},b_{\bar{r}})=u_{j_r-1}(s_{i_{j_r}})u_{j_r-1}^{-1}.\end{equation}
Then we have
\begin{equation}(a_{\bar{1}},b_{\bar{1}})(a_{\bar{2}},b_{\bar{2}})\cdots(a_{\bar{d}},b_{\bar{d}})=wu^{-1}.\end{equation}
Reversing the order of the transpositions gives
\begin{equation} \label{gens} (a_1,b_1)(a_2,b_2)\cdots (a_d,b_d)=uw^{-1}\end{equation}
and so we have
\begin{equation} t_{u([k])}(a_1,b_1)(a_2,b_2)\cdots (a_d,b_d)=f_{u,w} \label{rightperm} \end{equation}

To construct the desired bridge graph, we successively add bridges 
\begin{displaymath}(a_1,b_1),\cdots,(a_d,b_d)\end{displaymath}
to the lollipop graph corresponding to $u([k])$.  This is possible, so long as the hypotheses of Proposition \ref{canadd} are satisfied at each step.  For the moment, let us assume this is the case; that is, the sequence of transpositions in \eqref{gens} corresponds to a bridge graph $G$.  It follows from \eqref{rightperm} that 
\begin{equation}\Pi_G=\Pi_{\langle u,w \rangle_k}.\end{equation}  

We claim that the parametrization arising from $G$ is precisely the projected Deodhar parametrization corresponding to 
\begin{math} \mf{u} \preceq \mf{w}.\end{math}  For this, recall that the matrices $\beta_r$ from the previous section are given by 
\begin{displaymath} \beta_r = x_{(a_r,b_r)}(\pm t_r)\end{displaymath}
for all \begin{math}1 \leq r \leq d.\end{math}  Hence, we may construct both parametrizations by taking the matrix which has a single non-zero Pl\"ucker coordinate $u([k])$, and multiplying on the right by a sequence of factors \begin{math}x_{(a_r,b_r)}(\pm t_r).\end{math}  In each case, the sign of the parameter is negative if \begin{math}|u([k]) \cap [a_r+1,b_r-1]|\end{math} is odd, and positive otherwise, so the parametrizations are the same.  

\begin{rmk}\label{threethings} Our task is to prove that the sequence of transpositions in \eqref{gens} will always correspond to a bridge graph.  There are a number of things to check. In particular, suppose that adding the bridges
\begin{equation}(a_1,b_1),\ldots,(a_{r-1},b_{r-1}),\end{equation}
yields a reduced bridge graph corresponding to some bounded affine permutation $f_{r-1}$.  To prove that we can add the bridge $(a_r,b_r)$, we must show the following:
\begin{enumerate}
\item \begin{math} f_{r-1}(a_r) < f_{r-1}(b_r)\end{math}
\item If $a_r$ (respectively $b_r$) is a fixed point of $f_{r-1}$, then the boundary leaf at $a_r$ is white (respectively, black).
\item Each $c$ with \begin{math} a_r< c < b_r\end{math} is a lollipop.
\end{enumerate}
\end{rmk}

In the remainder of this section, we show that these criteria are always satisfied, so that we can build a bridge graph corresponding to each projected Deodhar parametrization.

\subsection{Anti-Grassmannian permutations, $k$-order, and wiring diagrams}

We now reduce to the case where $u$ is \emph{anti-Grassmannian}.  This, in turn, allows us to prove the main result by induction on \begin{math}\ell(w)-\ell(u).\end{math}

\begin{lem}\label{antiG} Let \begin{math}u \leq_k w.\end{math} Then there exists \begin{math}z \in S_k \times S_{n-k}\end{math} such that \begin{math}uz \leq_k wz\end{math}, where $uz$ is anti-Grassmannian and both factorizations are length additive.  
\end{lem}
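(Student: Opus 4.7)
The plan is to take $z := v_u^{-1} w_0^P$, where $u = u^* v_u$ is the length-additive factorization with $u^*$ the Grassmannian (minimum-length) representative of the left coset $u(S_k \times S_{n-k})$, and $w_0^P$ denotes the longest element of $S_k \times S_{n-k}$.  Then $uz = u^* w_0^P$, which is the anti-Grassmannian (maximum-length) representative of that same coset.  Length additivity of the factorization $u \cdot z$ is standard: $u^* \cdot v$ is length additive for any $v \in S_k \times S_{n-k}$ because $u^*$ is the minimum-length coset rep, and $\ell(v_u^{-1} w_0^P) = \ell(w_0^P) - \ell(v_u)$ because $w_0^P$ is the longest element.

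The main obstacle is showing that the factorization $w \cdot z$ is also length additive.  Writing $w = w^* v_w$ length-additively with $w^*$ Grassmannian, we have $wz = w^*(v_w v_u^{-1} w_0^P)$; the parenthesized element lies in $S_k \times S_{n-k}$, so this outer factorization is automatically length additive.  It therefore suffices to show $\ell(v_w \cdot z) = \ell(v_w) + \ell(z)$ inside $S_k \times S_{n-k}$.  Using the identity $\ell(x w_0^P) = \ell(w_0^P) - \ell(x)$, this reduces to $\ell(v_w v_u^{-1}) = \ell(v_u) - \ell(v_w)$, which is equivalent to $v_w \leq_{(l)} v_u$ in the left weak order on $S_k \times S_{n-k}$.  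I would establish this via the standard characterization $v \leq_{(l)} v'$ iff $\mathrm{Inv}(v) \subseteq \mathrm{Inv}(v')$ in the symmetric group, where $\mathrm{Inv}(v) := \{(a,b) : a < b,\; v(a) > v(b)\}$.  Since $v_u \in S_k \times S_{n-k}$, its inversions are all same-half pairs, and since $u^*$ is increasing on each half we have $v_u(a) > v_u(b) \iff u(a) > u(b)$; thus $\mathrm{Inv}(v_u)$ consists exactly of the same-half inversions of $u$, and analogously for $v_w$.  Condition (3) of Theorem \ref{kcomp} applied to same-half pairs says precisely that every same-half inversion of $w$ is also an inversion of $u$, giving $\mathrm{Inv}(v_w) \subseteq \mathrm{Inv}(v_u)$ and hence $v_w \leq_{(l)} v_u$.

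Finally, to check $uz \leq_k wz$ I would verify the three conditions of Theorem \ref{kcomp}.  Conditions (1) and (2), which are componentwise inequalities of $u$ versus $w$ on each half, are preserved because $z \in S_k \times S_{n-k}$ permutes positions only within each half.  For condition (3) on a same-half pair $a' < b'$, set $c = z(a')$ and $d = z(b')$: if $c < d$, the condition reduces to condition (3) for $u \leq_k w$ at the pair $(c,d)$; if $c > d$, then $(a', b')$ is an inversion of $z$, and the length additivity of $u \cdot z$ established above forces $u(c) > u(d)$, making the hypothesis of condition (3) vacuous at this pair.
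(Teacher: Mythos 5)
Your proof is correct, but it takes a genuinely different route from the paper's. The paper argues by descending induction on $\ell(u)$: if $u$ is not yet anti-Grassmannian there is an ascent $u(i)<u(i+1)$ with $i\neq k$, condition (2) of Theorem \ref{kcomp} forces $w(i)<w(i+1)$ and gives $us_i\leq_k ws_i$ with length-additive multiplications by one simple transposition at a time, and the element $z$ is assembled as a product of these $s_i$'s. You instead exhibit $z=v_u^{-1}w_0^P$ in closed form and verify everything globally: length additivity of $u\cdot z$ from standard parabolic-coset facts, length additivity of $w\cdot z$ by reducing it to the weak-order relation $v_w\leq_{(l)}v_u$, which you obtain from the inversion-set characterization of left weak order together with the non-crossing condition of Theorem \ref{kcomp} (note the paper states that theorem with two conditions, so what you call condition (3) is its condition (2)), and finally $uz\leq_k wz$ by checking the criterion of Theorem \ref{kcomp} directly, with the case $z(a')>z(b')$ disposed of by the length additivity of $u\cdot z$ already established. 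Both arguments are sound; the paper's induction is shorter because each inductive step is essentially the one-transposition instance of your global computation, while your version buys an explicit description of the output ($uz=u^*w_0^P$ is the maximal coset representative, and the relation $v_w\leq_{(l)}v_u$ is made visible) at the cost of invoking a few more standard Coxeter and parabolic-subgroup facts.
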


\begin{proof}We prove this by descending induction on the length of $u$.  If $u$ is of maximal length in its left coset of \begin{math}S_k \times S_{n-k},\end{math} then $z$ is the identity.  Otherwise, there is some $i \neq k$ such that $u(i) < u(i+1)$.  By Theorem \ref{kcomp}, \begin{math}w(i) < w(i+1)\end{math}, and \begin{math}us_i \leq_k ws_i.\end{math}  By induction, this means that there exists \begin{math}z' \in S_k \times S_{n-k}\end{math} such that \begin{math}(us_i)z'\leq_k (ws_i)z',\end{math} with both factorizations length additive.  Hence, we can take \begin{math}z = s_iz',\end{math} and the proof is complete.
\end{proof} 

Let $u,w$ and $z$ be as above.  Choose a reduced word $\mf{w}$ for $w$ and a reduced word $\mf{z}$ for $z$, and let $\mf{u}$ be the unique PDS for $u$ in $\mf{w}$.
Then concatenating $\mf{u}$ and $\mf{z}$ gives the unique PDS
$\mf{uz}$ for $uz$ in the reduced word \begin{math}\mf{w}\mf{z}.\end{math}  
It is clear, however, that  the pairs \begin{math}\mf{u} \preceq \mf{w}\end{math} and \begin{math} \mf{uz} \preceq \mf{wz}\end{math} project to the same Deodhar parametrization of \begin{math}\Pi_{\langle u, w \rangle_k}.\end{math}
Hence, we can always assume that $\mf{u}$ is an anti-Grassmannian permutation.  

\begin{defn}\label{valid} A bridge diagram is \emph{valid} if it corresponds to a pair $\mf{u} \preceq \mf{w}$ with $u$ anti-Grassmannian, and $\mf{u}$ a PDS of $\mf{w}$.  \end{defn}

We will prove the main result by induction on the number of bridges.  By Proposition \ref{induce}, adding the first $r$ bridges in the diagram corresponding to a PDS \begin{math}\mf{u} \preceq \mf{w}\end{math} gives a bridge diagram for some \begin{math}\mf{u} \preceq \mf{v},\end{math} where $\mf{v}$ is a reduced word for some $v \in S_n$ and $\mf{u}$ is a PDS of $\mf{v}$.  However, our result does not hold for all PDS's, only those corresponding to pairs \begin{math}u,w \in S_n\end{math} with $u \leq_k v$.  Hence, we must show that we always have \begin{math}u \leq_k v.\end{math}  Since $u$ is anti-Grassmannian, this is an easy lemma.

\begin{lem}\label{korder}
If $u$ is anti-Grassmannian, then $w \geq u$ implies \begin{math} w \geq_k u\end{math}.  
\end{lem}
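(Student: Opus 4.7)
The plan is to verify directly the two conditions of Theorem \ref{kcomp} for the pair $(u,w)$. Condition~(2) falls out immediately from the hypothesis: since $u$ is anti-Grassmannian of type $(k,n)$, it is strictly decreasing on each of $[k]$ and $[k+1,n]$, so any pair $a<b$ with $u(a)<u(b)$ must straddle $k$, i.e., $a\leq k<b$. In particular condition~(2) holds vacuously for indices lying within a single block.

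For condition~(1) I would invoke the standard tableau (Ehresmann) criterion for Bruhat order: $u\leq w$ holds if and only if, for each $i$, the elements of $u([i])$ sorted in increasing order are componentwise $\leq$ the sorted elements of $w([i])$. The dual statement for suffixes (obtained by complementation, or from the $w_0$-symmetry of Bruhat order) says that the sorted entries of $u([i,n])$ are componentwise $\geq$ those of $w([i,n])$.

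The anti-Grassmannian property of $u$ then converts these sorted comparisons into the pointwise inequalities demanded by condition~(1). For $a\in[k]$, the fact that $u$ is decreasing on $[k]$ forces $u(a)=\min u([a])$, and the prefix criterion at $i=a$ gives $u(a)=\min u([a])\leq\min w([a])\leq w(a)$. Dually, for $b\in[k+1,n]$, the fact that $u$ is decreasing on $[k+1,n]$ forces $u(b)=\max u([b,n])$, and the suffix criterion at $i=b$ gives $u(b)=\max u([b,n])\geq\max w([b,n])\geq w(b)$. Both halves of condition~(1) now hold, and Theorem \ref{kcomp} yields $u\leq_k w$.

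There is no serious obstacle; the crux of the argument is simply that when $u$ is anti-Grassmannian, each individual value $u(a)$ with $a\leq k$, and each $u(b)$ with $b>k$, coincides with an extremum of a prefix, respectively suffix, of $u$. That is precisely what lets one translate the componentwise sorted comparison supplied by the tableau criterion into the entrywise inequalities $u(a)\leq w(a)$ and $u(b)\geq w(b)$ that Theorem \ref{kcomp} requires. One minor technical point to handle cleanly is invoking the suffix form of the Ehresmann criterion, but this follows from the prefix form by the standard $w_0$-duality of Bruhat order.
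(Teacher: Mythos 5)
Your proof is correct and takes essentially the same route as the paper: condition (2) of Theorem \ref{kcomp} holds automatically because an anti-Grassmannian $u$ is decreasing on each block, and condition (1) follows from the Ehresmann criterion since $u(a)$ is the prefix minimum for $a \leq k$ and the suffix maximum for $a > k$. The only cosmetic difference is that for $a > k$ you invoke the dual (suffix) form of the criterion, whereas the paper argues directly with the complement $[n]\setminus u([a-1])$ --- the same complementation argument in different packaging.
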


\begin{proof}
Since $u$ is anti-Grassmannian, the second condition of Theorem \ref{kcomp} is vacuously true.  Hence, it suffices to show that the first condition holds.  That is, we must show that \begin{math}u(a) \leq w(a)\end{math} for $a \leq k$, and \begin{math}u(a) \geq w(a)\end{math} for $a > k$.

We recall the usual criterion for comparison in Bruhat order.  That is, $u \leq w$ if and only if for all $a \in [n]$, we have \begin{math}u([a]) \leq w([a])\end{math}.  For $u$ anti-Grassmannian, it follows that \begin{math}u(a) \leq w(a)\end{math} for all $a \leq k$.  Indeed, for $a \leq k$, we have \begin{math}u(a) = \min(u[a])\end{math}.  If \begin{math}w(a) < u(a),\end{math} then \begin{math}\min(w[a]) < \min(u[a]),\end{math} a contradiction.

Next, we must show that \begin{math}w(a) \leq u(a)\end{math} for all $a > k$.  For $a > k$, $u(a)$ is the largest element of \begin{math}[n] \backslash u([a-1]).\end{math}  In particular, each $b$ with \begin{math}u(a) < b \leq n\end{math} is in $u([a-1])$.  Since we have \begin{math}w([a-1]) \geq u([a-1]),\end{math} each \begin{math}u(a) < b \leq n\end{math} is in $w([a-1])$ is well.  It follows that \begin{math}w(a) \leq u(a),\end{math} and the proof is complete.
\end{proof}

\subsection{Planarity of the graph}

Consider a valid bridge diagram \begin{math} \mf{u} \preceq \mf{w} \end{math}, as defined in Definition \ref{valid}.  We describe a way to construct the corresponding bridge graph.   For an example, see Figure \ref{wiring}.  We think of the right endpoints of the wires as boundary vertices of a bicolor graph embedded in a disk; the wires themselves as paths with one endpoint on the boundary;  and the new crossings as white-black bridges between these paths.  Ignore the tail of each wire from the left endpoint up to the first bridge on that wire.  Add a white boundary leaf at the right endpoint of each isolated wire with left endpoint $\leq k$, and a black boundary leaf at the right endpoint of each isolated wire with left endpoint $> k$.  

We say that a bridge diagram is \emph{planar} if the above process yields a planar embedding of a bridge graph.  If this occurs, the graph must necessarily be the bridge graph for \begin{math}\Pi_{\langle u, w \rangle_k}\end{math} described previously, with its sequence of bridges given by \eqref{gens}.  Hence, our goal is to prove that a valid bridge diagram will always be planar.  

 \begin{lem}\label{last}  Let $a$ be a non-isolated wire in a valid bridge diagram $\mf{u} \prec \mf{w}$, and let $t$ be the left endpoint of wire $a$.  If the first bridge on wire $a$ is a bridge $(a,b)$, with $b > a$, then $t \leq k$.  If instead $b < a$, then $t > k$.
 \end{lem}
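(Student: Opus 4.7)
The plan is to induct on the number of bridges $d$ in the diagram, splitting into two cases depending on whether the first bridge on wire $a$ is the leftmost bridge of the whole diagram.

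\textbf{Case 1: Wire $a$ sits on the leftmost bridge.} Let $(p,q)$ with $p<q$ be the leftmost bridge, corresponding to the last missing letter $s_{i_{j_d}}$ of $\mf{w}$, and assume $a \in \{p,q\}$. By Proposition \ref{induce} applied with $r=1$, inserting just this bridge as a crossing produces a reduced word $\mf{v}$ with $\ell(v)=\ell(u)+1$ and $\mf{u}$ still a PDS for $u$ in $\mf{v}$. So $u \lessdot v$ in Bruhat order; Lemma \ref{korder} gives $u \leq_k v$, and since $\leq_k$ is a sub-relation of $\leq$, any element strictly between $u$ and $v$ in $\leq_k$ would also lie strictly between them in $\leq$, contradicting the cover. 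Hence $u \lessdot_k v$. Writing this cover as $v = u\,t_{(s,r')}$ with $s<r'$ and $u(s)<u(r')$, the anti-Grassmannian hypothesis rules out both $s,r' \leq k$ and both $s,r' > k$ (either would place $s,r'$ in a decreasing block of $u$ and force $u(s)>u(r')$), so $s \leq k < r'$. A direct conjugation gives $vu^{-1} = (u(s), u(r'))$, which is also the leftmost factor of $uw^{-1}=(a_1,b_1)\cdots(a_d,b_d)$, namely $(p,q)$. Matching the orderings $u(s)<u(r')$ and $p<q$ identifies $p=u(s) \in u([k])$ and $q=u(r') \notin u([k])$, so $u^{-1}(p) \leq k$ and $u^{-1}(q) > k$. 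This is exactly the lemma for the wires $p$ and $q$.

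\textbf{Case 2: Wire $a$'s first bridge is not the leftmost.} Then $a \notin \{p,q\}$. Let $\mf{w}'$ be obtained from $\mf{w}$ by deleting the single letter $s_{i_{j_d}}$ corresponding to the leftmost bridge. Then $\mf{w}'$ is still reduced (as a subword of a reduced word), and $\mf{u}$ remains a PDS for $u$ in $\mf{w}'$: each positive-distinguished inequality $u_{(j-1)} < u_{(j-1)} s_{i_j}$ is a local condition on the cumulative products, which are unchanged when a letter outside $\mf{u}$ is deleted. Thus $\mf{u} \preceq \mf{w}'$ is a valid bridge diagram with $d-1$ bridges, obtained geometrically from the original by erasing the leftmost dashed bridge. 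Because wire $a$ was not incident to that bridge, its first bridge in the reduced diagram is still $(a,b)$, so the induction hypothesis finishes the argument.

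The main obstacle is the local verification for Case 2 that deleting the leftmost-bridge letter from $\mf{w}$ genuinely yields a valid bridge diagram with the same first-bridge structure on every other wire. This reduces to checking the PDS inequalities at each position of $\mf{w}'$ (immediate from the cumulative-product description) and observing that no other bridge's placement in the diagram moves when one dashed bridge is erased. Once these routine checks are in hand, all the substantive content lives in Case 1, where Lemma \ref{korder} and the $k$-cover structure force the leftmost bridge to pair an element of $u([k])$ (the upper wire) with one outside $u([k])$ (the lower wire).
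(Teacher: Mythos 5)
Your Case 1 is essentially correct (and in fact the detour through $\lessdot_k$ is unnecessary: $u\lessdot v$ plus the anti-Grassmannian hypothesis already forces the cover transposition to straddle $k$). The genuine gap is in Case 2. The assertion that deleting the letter $s_{i_{j_d}}$ corresponding to the leftmost bridge leaves $\mf{w}'$ reduced is false, and the parenthetical justification --- that a subword of a reduced word is reduced --- is not a valid principle (delete the middle letter of $s_1s_2s_1$). Moreover the failure genuinely occurs for valid bridge diagrams: take $n=3$, $k=1$, $\mf{w}=s_2s_1s_2$, $u=s_2=132$, which is anti-Grassmannian for $k=1$ and satisfies $u\leq_1 w=321$. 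The PDS for $u$ in $\mf{w}$ is the rightmost letter, so the first two letters are the bridges; the leftmost bridge in the diagram corresponds to the last missing letter, namely the middle $s_1$ (the bridge sequence is $(a_1,b_1)=(1,2)$, $(a_2,b_2)=(2,3)$). Deleting that $s_1$ leaves $s_2s_2$, which is not reduced, so $\mf{u}\preceq\mf{w}'$ is not a valid bridge diagram and the induction hypothesis cannot be invoked --- and Case 2 is exactly the case you need here, e.g.\ for wire $3$, whose first bridge is $(2,3)$ and is not the leftmost bridge. So the inductive step as written breaks down; the rest of Case 2 (that erasing a bridge does not disturb the wires of $\mf{u}$ or the other bridges, and that the positivity inequalities survive deletion of a non-PDS letter) is fine, but it cannot rescue the missing reducedness.

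The repair is to run the reduction in the other direction, which is what the paper does and what your Case 1 already does for the leftmost bridge: rather than deleting letters from $\mf{w}$, use Proposition \ref{induce} to insert, as crossings, exactly the bridges from the left edge of the diagram through the first bridge $(a,b)$ on wire $a$. Proposition \ref{induce} guarantees the resulting word $\mf{w}'$ is reduced with $\mf{u}$ a PDS of it, so $u\leq w'$, hence $u\leq_k w'$ by Lemma \ref{korder}. Since no inserted crossing lies on wire $a$ before $(a,b)$ and none lies to the right of $(a,b)$ at all, the strand with left endpoint $t$ follows wire $a$, crosses at $(a,b)$, and then follows wire $b$ to its right endpoint, so $w'(t)=b>a=u(t)$ (respectively $w'(t)=b<a=u(t)$ when $b<a$), and Theorem \ref{kcomp}(1) immediately gives $t\leq k$ (respectively $t>k$) with no induction needed. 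This is the paper's one-step argument; your two-case induction collapses into it once the faulty deletion step is replaced by this insertion.
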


\begin{proof}We argue the first case; the proof of the second case is analogous.  Note that wire $b$ must lie below wire $a$ to the right of the $(a,b)$ bridge.  Suppose the first bridge on wire $a$ is $(a_r,b_r)$.  Let $w'$ be the permutation corresponding to the wiring diagram obtained by adding bridges 
\begin{displaymath}(a_d,b_d),\ldots,(a_r,b_r)\end{displaymath}
to the diagram for $\mf{u}$ and replacing each bridge with a crossing.  Then 
\begin{equation} w'(t) = b > a = u(t)\end{equation}
Since $w' \geq_k u$, Theorem \ref{kcomp} implies that $t \leq k$. 
\end{proof}

\begin{rmk}
As a corollary, note that the second condition in Remark \ref{threethings} will always be satisfied, as long as the sequence of bridges 
\begin{equation} (a_1,b_1),\ldots,(a_d,b_d)\end{equation}
is planar.  This follows from the lemma, since white lollipops correspond to isolated wires whose left endpoint is $\leq k$, and black lollipops correspond to isolated wires whose right endpoint is $>k$.
\end{rmk}

\begin{lem} \label{escape}Let $a,b$ and $c$ be wires in a valid bridge diagram, with $b < c$.  Let $t$ be the left endpoint of wire $a$.  If we have \begin{math}(b,c) \rightarrow (a \downarrow b)\end{math} or \begin{math}(b,c) \rightarrow (a \downarrow c),\end{math}
then $t \leq k$.  If \begin{math}(b,c) \rightarrow (a \uparrow b)\end{math} or \begin{math}(b,c) \rightarrow (a \uparrow c)\end{math}, then $t > k$.
\end{lem}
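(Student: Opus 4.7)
The plan is to mirror the strategy used for Lemma~\ref{last}: for each of the four cases, produce an auxiliary permutation $v \in S_n$ satisfying $u \leq v$ in Bruhat order together with a strict inequality between $v(t)$ and $u(t) = a$, and then convert that into a bound on $t$ via Theorem~\ref{kcomp}. Because we are working in a valid bridge diagram, $u$ is anti-Grassmannian, so Lemma~\ref{korder} upgrades any Bruhat comparison $u \leq v$ to the $k$-Bruhat comparison $u \leq_k v$. Theorem~\ref{kcomp}(1) then forces $t \leq k$ whenever $v(t) > u(t)$, and $t > k$ whenever $v(t) < u(t)$.

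The $v$ itself will be obtained, as in Lemma~\ref{last}, by inserting a specific subset of the bridges of $\mf{u} \preceq \mf{w}$ as honest crossings in the wiring diagram of $\mf{u}$. Proposition~\ref{induce} (and the obvious right-ward analog, proved by the same appeal to Remark~\ref{nocross}) guarantees that the enlarged diagram is reduced, represents some $v \in S_n$, and still has $\mf{u}$ as its PDS, so that $v \geq u$. We then compute $v(t)$ by tracing the wire of $\mf{v}$ starting at $t$. For Cases 1 and 2, I would first record the following preliminary fact, which comes essentially for free from reducedness of $\mf{u}$ together with the convention that at the $(b,c)$-bridge wire $b$ lies directly above wire $c$ (a consequence of the PDS inequality $u_{(j-1)}(i_j) < u_{(j-1)}(i_j+1)$): wire $a$ must lie above both wire $b$ and wire $c$ at the $x$-coordinate of the bridge. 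Indeed, the hypothesis $(b,c) \to (a \downarrow b)$ or $(b,c) \to (a \downarrow c)$ places the single crossing of wire $a$ with wire $b$ (resp.\ $c$) strictly to the right of the bridge, which pins down the order at the bridge by the reduced-diagram property. In Cases 3 and 4, the dual statement gives wire $a$ below both $b$ and $c$ at the bridge.

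With wire $a$'s position at the bridge known, I would in Cases 1 and 2 insert the $(b,c)$-bridge together with enough of the bridges lying to its right in the diagram that the wire of $\mf{v}$ starting at $t$, after first being swapped onto wire $b$'s (resp.\ $c$'s) track by the $\mf{u}$-crossing $(a \downarrow b)$ (resp.\ $(a \downarrow c)$) and then routed further up by the $(b,c)$-bridge itself, must emerge at some right endpoint strictly larger than $a$. In Cases 3 and 4 the insertion is the mirror image and the emerging endpoint is strictly less than $a$. The conclusion of the lemma then follows case by case from Theorem~\ref{kcomp} as described above.

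The main obstacle is identifying the precise set of bridges to insert and verifying that the trace genuinely lands on the correct side of $a$: the naive choice of inserting only the $(b,c)$-bridge gives $v = (b,c)u$, hence $v(t) = a$ (since $a \notin \{b,c\}$), which yields no information. The correct insertion must, as in Lemma~\ref{last}, be chosen so that the PDS no-recrossing property forces each successive switch in the trace onto a wire whose right endpoint continues to bound $a$ in the desired direction; verifying this ``monotone cascade'' is the technical heart of the argument, and will rely on combining the no-recrossing property of each inserted bridge with the anti-Grassmannian structure of $u$ to exclude the trace from ever descending below (resp.\ rising above) the level of wire $a$.
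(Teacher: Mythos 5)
Your proposal defers exactly the step that carries the content of the lemma, and the framework you commit to cannot deliver it. You want an auxiliary $v \geq u$, obtained by converting some set of bridges into crossings, with a \emph{strict} inequality between $v(t)$ and $u(t)=a$, to be fed into Lemma \ref{korder} and Theorem \ref{kcomp}. But in such an enlarged diagram a strand leaves the track of wire $a$ only at an \emph{inserted} crossing incident to wire $a$: at a crossing of the underlying diagram $\mf{u}$ (such as the crossing $(a \downarrow b)$) the strand passes straight through without changing tracks, and the $(b,c)$-bridge does not touch wire $a$ at all. So your description of the trace (``swapped onto wire $b$'s track by the $\mf{u}$-crossing, then routed further up by the $(b,c)$-bridge,'' which in any case lies to the \emph{left} of that crossing) does not reflect how the diagram behaves. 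Worse, the hypotheses allow wire $a$ to be isolated: take $n=4$, $k=2$, $u=3142$, $\mf{w}=\mf{u}\,s_2$ with the single bridge $(1,4)$ inserted at the far left of the diagram, and $a=3$, $b=1$, $c=4$. This is a valid diagram satisfying $(b,c)\rightarrow(a\downarrow b)$, yet every permutation obtainable by inserting bridges satisfies $v(t)=a$, so no strict inequality at $t$ exists and Theorem \ref{kcomp} gives no bound on $t$. Hence the ``monotone cascade'' you postpone is not a routine verification; in this generality there is nothing to cascade along. (Note also that Proposition \ref{induce} is stated only for inserting the \emph{leftmost} bridges; the rightward analogue you invoke is not established in the paper and would itself require proof.)

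The paper's proof sidesteps auxiliary permutations for wire $a$ entirely. It first reduces $(b,c)\rightarrow(a\downarrow c)$ to $(b,c)\rightarrow(a\downarrow b)$. If the $(b,c)$-bridge is the first bridge on wire $b$, then Lemma \ref{last} says the left endpoint of wire $b$ is $\leq k$; since wires $a$ and $b$ cross exactly once, with $a$ on top to the left of that crossing, wire $a$ starts \emph{above} wire $b$, so $t$ is smaller than $b$'s left endpoint and hence $t\leq k$ --- no comparison of $v(t)$ with $u(t)$ is needed, which is precisely how the isolated-wire case is handled. Otherwise the first bridge on $b$ is some $(e,b)$ with $e<b$ occurring before $(b,c)$; adjacency of $e$ and $b$ at that bridge plus the single-crossing property forces $(e,b)\rightarrow(a\downarrow e)$, and one repeats the argument with $e$ in place of $b$, terminating because the wire index strictly decreases. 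To salvage your approach you would have to replace the inequality between $v(t)$ and $u(t)$ by this comparison of left endpoints, at which point you have essentially reconstructed the paper's argument.
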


\begin{proof}

We prove the case where \begin{math}(a \downarrow b)\end{math} or \begin{math}(a \downarrow c)\end{math}.  The other case is analogous.  If \begin{math}(b,c) \rightarrow (a \downarrow c),\end{math} then we must have 
\begin{displaymath} (b,c) \rightarrow (a \downarrow b) \rightarrow (a \downarrow c)\end{displaymath}
so it suffices to consider the case \begin{math}(a \downarrow b).\end{math}  See Figure \ref{escapelemma}. 

There are two cases to consider.  If wire $b$ satisfies the conditions of Lemma \ref{last}, then the left endpoint of $b$ is $\leq k$.  Since  \begin{math}(a \downarrow b),\end{math} the same is true of $a$. Otherwise, we must have \begin{math}(e,b) \rightarrow (b,c)\end{math} for some $e < b$.  Hence \begin{math}(e,b) \rightarrow (a \downarrow b),\end{math} and so \begin{math}(e,b) \rightarrow (a \downarrow e).\end{math}
Hence, we can apply the previous argument to $e$, and so on.  Eventually, we must encounter a wire $e'$ such that \begin{math}(a \downarrow e'),\end{math} and $e'$ satisfies the conditions of Lemma \ref{last}.  This completes the proof.

\end{proof}

\begin{figure}[ht]
\centering
\begin{tikzpicture}[scale = 0.9]
\draw (0,0) to [out = 0, in = 180] (3,0.5);
\draw (3.5,0.5) to [out = 0, in = 180] (6.5,0);
\mbrd{3}{0.5}
\draw (0,1.5) to [out = 0, in = 180] (3,1);
\draw (3.5,1) to [out = 0, in = 180] (6.5,1.5);
\draw [dashed] (0,2.5) to [out = 0, in = 180] (3,1.75) to [out = 0, in = 180] (6.5,-1);
\sdot{6.5}{0};\sdot{6.5}{-1};\sdot{6.5}{1.5};
\node [right] at (6.5,0) {$c$};
\node [right] at (6.5,1.5) {$b$};
\node [right] at (6.5,-1) {$a$};
\end{tikzpicture}
\caption{If the left endpoint of wire $b$ is $\leq k$, the same must be true for wire $a$.}
\label{escapelemma}
 \end{figure}
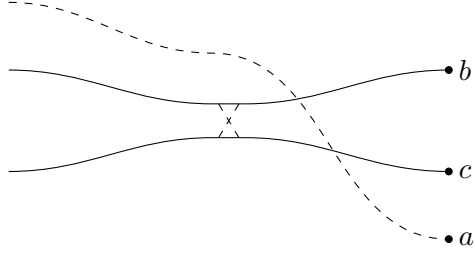
 
Let $\mf{u} \preceq \mf{w}$ be a valid bridge diagram, and suppose inserting the bridges 
\begin{displaymath}(a_1,b_1),\ldots,(a_{r-1},b_{r-1})\end{displaymath}
from the diagram for \begin{math}\mf{u} \preceq \mf{w}\end{math} into the diagram for $\mf{w}$ gives a planar bridge diagram.  We will show that adding the bridge $(a_r,b_r)$ with \begin{math}a_r < b_r\end{math} preserves planarity. The proof consists of repeatedly apply lemmas \ref{last} and \ref{escape}.  There are three cases to consider, depending on whether wires $a_r$ and $b_r$ are isolated.

\begin{lem} \label{2wires}
If $a_r$ and $b_r$ are both non-isolated wires, then adding the bridge $(a_r,b_r)$ preserves planarity.
\end{lem}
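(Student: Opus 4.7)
The plan is to verify the hypotheses of Proposition \ref{canadd} so that bridge $(a_r,b_r)$ can be added to the reduced plabic graph $G_{r-1}$ obtained from the first $r-1$ bridges. By Remark \ref{threethings} together with the remark following Lemma \ref{last}, the color condition and the inversion condition $f_{r-1}(a_r)>f_{r-1}(b_r)$ will be automatic once the diagram remains planar (the inversion condition being forced by the PDS structure via \eqref{rightperm} and the inductive construction). The essential content of Lemma \ref{2wires} is therefore the lollipop condition: every position $c$ with $a_r<c<b_r$ must be a lollipop in $G_{r-1}$, equivalently wire $c$ has no bridge among $(a_1,b_1),\dots,(a_{r-1},b_{r-1})$ in the bridge diagram.

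I argue by contradiction. Suppose some wire $c$ with $a_r<c<b_r$ does have a bridge at some $x$-coordinate $x_c<x_r$, where $x_r$ denotes the $x$-coordinate of bridge $(a_r,b_r)$. Because wires $a_r$ and $b_r$ sit at adjacent $y$-positions at $x_r$ and wire $c$'s right endpoint lies strictly between $a_r$ and $b_r$, wire $c$ cannot be between wires $a_r$ and $b_r$ at $x_r$; it must lie strictly above wire $a_r$ or strictly below wire $b_r$ at $x_r$. In the first case (Case A), wire $c$ must descend past wire $a_r$ to reach its right endpoint, producing a crossing $(c\downarrow a_r)$ at some $x^{*}>x_r$. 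Case B (wire $c$ below $b_r$) is symmetric and gives a crossing $(c\uparrow b_r)$ at $x^{*}>x_r$.

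The crucial step is the hypothesis of Lemma \ref{2wires}: both $a_r$ and $b_r$ are non-isolated in the partial bridge diagram after $r-1$ bridges, i.e.\ each has a bridge at some $x$-coordinate strictly less than $x_r$. In Case A, wire $a_r$'s leftmost bridge in the partial diagram occurs at $x_{a_r}<x_r<x^{*}$, so wire $a_r$ appears (as an untruncated segment) in the partial diagram at $x^{*}$. Wire $c$ is likewise present at $x^{*}$, since its leftmost bridge in the partial diagram is at some $x$-coordinate $\leq x_c<x_r<x^{*}$. Thus the crossing of wires $c$ and $a_r$ at $x^{*}$ is visible in the partial bridge diagram with $r-1$ bridges, contradicting the inductive planarity hypothesis. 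Case B is handled identically with wire $b_r$ playing the role of wire $a_r$.

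The delicate point is topological: one must see that wire $c$'s right endpoint strictly between $a_r$ and $b_r$, together with the fact that wires $a_r,b_r$ are adjacent at $x_r$, forces the post-bridge crossing $x^{*}>x_r$; and that the lemma's non-isolation hypothesis on $a_r$ and $b_r$ is precisely what guarantees the relevant wire is drawn at $x^{*}$ to make that crossing visible. In contrast to the remaining two cases of this case analysis, Lemmas \ref{last} and \ref{escape} play no role here; they are needed precisely when the wire that should be drawn at $x^{*}$ is a lollipop in $G_{r-1}$, so that visibility of the forbidden crossing must be replaced by a left-endpoint analysis rooted in Bruhat comparison.
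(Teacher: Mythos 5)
Your argument is correct and is essentially the paper's: the paper simply observes that, since both wires are non-isolated, their terminal segments are (by the inductive planarity hypothesis) adjacent boundary legs of the partial plabic graph and the rightmost bridge is a new edge between them, while your contradiction argument with a hypothetical non-isolated wire $c$ strictly between $a_r$ and $b_r$ — forcing a visible crossing $(c\downarrow a_r)$ or $(c\uparrow b_r)$ to the right of the new bridge — just makes explicit why those two legs are adjacent, a step the paper leaves implicit. The one inaccuracy is your aside that the inversion condition $f_{r-1}(a_r)>f_{r-1}(b_r)$ is automatic from the PDS structure once planarity holds: it is neither needed for this lemma nor obtained that way in the paper, which verifies it separately (using that $u$ is anti-Grassmannian) in the subsection on proving the graph is reduced.
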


\begin{proof}
Note that we add the $(a_r,b_r)$ bridge to the right of all previous ones, and that wire $b_r$ lies immediately below wire $a_r$ at the location of the $(a_r,b_r)$ bridge.  By inductive assumption, the portions of wires $a_r$ and $b_r$ respectively to the right of all the bridges 
\begin{displaymath}(a_1,b_1),\ldots,(a_{r-1},b_{r-1})\end{displaymath}
correspond to legs of a plabic graph, as described above.  Hence, adding the $(a_r,b_r)$ bridge corresponds to adding a new edge between two adjacent boundary legs of a plabic graph, and the result is planar.  See Figure \ref{noniso}.
\end{proof}

\begin{figure}[ht]
\centering
\begin{subfigure}[t]{0.4\textwidth}
\centering
\begin{tikzpicture}[scale = 0.9]
\draw [white] (0,-2) [rectangle] (6.7,3);
\draw (0,1.75) to [out = 0, in = 180] (2,1.5) to (2.5,1.5) to [out = 0, in = 180] (6.5,1.75);
\draw (0,0.75) to [out = 0, in = 180] (2,1) to (2.5,1) to [out = 0, in = 180] (5,0.75) to (5.5,0.75) to [out = 0, in = 180] (6.5,0.85);
\draw (0,1) to [out = 0, in = 180] (3,0) to (3.5,0) to [out = 0, in = 180] (5,0.25) to (5.5,0.25) to [out = 0, in = 180] (6.5,0.15);
\draw (0,-1) to [out = 0, in =180] (3,-0.5) to (3.5,-0.5) to [out = 0, in = 180] (6.5,-1);
\dbrd{2}{1}; \dbrd{3}{-0.5};\dbrd{5}{0.25};
\sdot{6.5}{1.75};\sdot{6.5}{0.85};\sdot{6.5}{0.15};\sdot{6.5}{-1};
\node [right] at (6.5,0.85) {$a_r$}; \node [right] at (6.5,0.15) {$b_r$};
\end{tikzpicture}
\caption{A portion of a bridge diagram which contains the rightmost bridge $(a_r,b_r)$.}
\end{subfigure}
\hfill
\begin{subfigure}[t]{0.4 \textwidth}
\centering
\begin{tikzpicture}[scale = 0.9]
\draw (2,0) -- (2,5); \draw (0,1) -- (2,1); \draw (0,2) -- (2,2); \draw (-1,3) -- (2,3); \draw (-1,4) -- (2,4);
\edge{0}{1}{1};\edge{-1}{3}{1};\edge{1}{2}{1};
\wdot{2}{1};\wdot{2}{2};\bdot{2}{3};\bdot{2}{4};
\node [right] at (2,3) {$a_r$};
\node [right] at (2,2) {$b_r$};
\end{tikzpicture}
\caption{The portion of a plabic network corresponding to the bridge diagram at left.}
\end{subfigure}
\caption{Adding a rightmost bridge $(a_r,b_r)$ between two non-isolated wires corresponds to adding a bridge between two adjacent legs of a planar network.}
\label{noniso}
\end{figure}
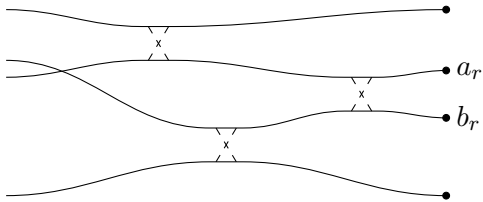
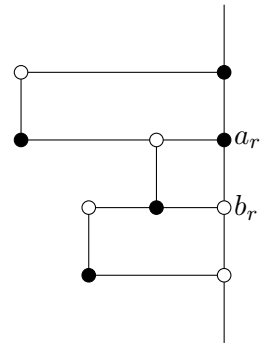

\begin{lem} \label{1wire}
Suppose exactly one of the wires $(a_r,b_r)$ is isolated.  Then adding the $(a_r,b_r)$ bridge gives a planar diagram. 
\end{lem}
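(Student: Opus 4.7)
The plan is to verify the three conditions of Remark~\ref{threethings} for adding the $(a_r,b_r)$ bridge. By symmetry, I handle the case where wire $a_r$ is isolated at step $r-1$ while wire $b_r$ is non-isolated; the opposite case is analogous. The color of the lollipop at $a_r$ is immediate from Lemma~\ref{last}: since the first bridge on $a_r$ in the full diagram is $(a_r,b_r)$ with $b_r>a_r$, the left endpoint of $a_r$ is at most $k$, so the lollipop is white, as required for the upper end of the bridge. The inversion condition $f_{r-1}(a_r)>f_{r-1}(b_r)$ follows from the factorization $t_{u([k])}(a_1,b_1)\cdots(a_d,b_d)=f_{u,w}$: by Proposition~\ref{induce} the partial bridge diagram at step $r-1$ is itself valid, inductively corresponding to $f_{r-1}=t_{u([k])}(a_1,b_1)\cdots(a_{r-1},b_{r-1})$, and $(a_r,b_r)$ being the next transposition in the factorization forces it to be an inversion of $f_{r-1}$.

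The central task is the intermediate-lollipop condition: every $c$ with $a_r<c<b_r$ must be isolated at step $r-1$. I argue by contradiction. At the cross-section of the full diagram just to the left of the $(a_r,b_r)$ bridge, wires $a_r$ and $b_r$ are adjacent with $b_r$ above $a_r$, since they swap at the bridge and, by the PDS property, do not cross again to its right. Any wire $c$ with $a_r<c<b_r$ thus lies either above both or below both at this cross-section; since its right endpoint lies strictly between $a_r$ and $b_r$, it must cross wire $a_r$ from above, or wire $b_r$ from below, to the right of the $(a_r,b_r)$ bridge. Lemma~\ref{escape} then pins down the left endpoint of $c$ to be at most $k$ in the first case and greater than $k$ in the second. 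If wire $c$ had a first bridge at step $s<r$, Lemma~\ref{last} would force that bridge to be $(c,e)$ with $e>c$ or $e<c$ respectively.

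The main obstacle is deriving a contradiction from the existence of such a bridge $(c,e)$. Three ingredients drive the argument: (i) the reducedness of $\mathbf{w}$, which prohibits any two wires from crossing more than once in the full diagram; (ii) the anti-Grassmannian property of $u$, which combined with left-endpoint data dictates exactly which pairs of wires cross in the $\mathbf{u}$ wiring; and (iii) Lemma~\ref{escape} applied to wire $e$ at the $(a_r,b_r)$ bridge. Using (i) one first concludes that the left endpoints of $c$ and $e$ lie on opposite sides of $k$, since otherwise $c$ and $e$ would cross both in $\mathbf{u}$ (by (ii)) and at the bridge $(c,e)$, violating reducedness. Then (iii) determines where wire $e$ sits at the $(a_r,b_r)$ cross-section, and tracking the relative positions of wires $a_r$, $b_r$, $c$, $e$ between the $(c,e)$ bridge and the $(a_r,b_r)$ bridge---using Remark~\ref{nocross} applied to $(c,e)$, the fact that $a_r$ has no bridges in the first $r-1$ steps, and the crossing patterns forced by (ii)---exhibits a required crossing in the $\mathbf{u}$ diagram between $a_r$ (or $b_r$) and $e$ that cannot occur there, contradicting (i). The subcase $e=b_r$ is the most delicate, requiring a careful analysis of how wire $a_r$ threads vertically between the $(c,b_r)$ and $(a_r,b_r)$ bridge locations.
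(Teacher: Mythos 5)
Your proposal has a genuine gap at the heart of the lemma. The crucial step is ruling out a non-isolated wire $c$ with $a_r < c < b_r$, and your argument stops exactly where the contradiction must be produced: you assert that ``tracking the relative positions of $a_r$, $b_r$, $c$, $e$ \ldots exhibits a required crossing in the $\mathbf{u}$ diagram that cannot occur there,'' and you explicitly defer the ``most delicate'' subcase $e=b_r$. That tracking is never carried out, so nothing is proved. The reason it becomes so involved is that you never invoke the inductive hypothesis that the diagram with the first $r-1$ bridges is already planar. That hypothesis is what makes the paper's argument short: since $c$ and the non-isolated member of $\{a_r,b_r\}$ are both non-isolated, planarity of the first $r-1$ bridges forces their used portions not to cross, so at the location of the new bridge wire $c$ lies on a definite side of both $a_r$ and $b_r$; comparing with the right endpoints then forces the \emph{isolated} wire to cross wire $c$ after the bridge $(c,c')$ (in the paper's case $(c,c')\rightarrow(a_r,b_r)\rightarrow(b_r \downarrow c)$, in your case $(a_r \uparrow c)$), and Lemma \ref{escape} immediately contradicts the side of its left endpoint supplied by Lemma \ref{last}. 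Your route instead applies Lemma \ref{escape} to wire $c$ at the new bridge and Lemma \ref{last} to $c$'s first bridge, and then needs a genuinely new double-crossing argument (your ingredients (i)--(iii)), which is precisely the part left as a sketch.

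Two further points. First, the wires do \emph{not} swap at a bridge: in a bridge diagram the dashed cross is inserted between two adjacent wires of the $\mathbf{u}$-diagram, which continue without exchanging (they only swap after bridges are replaced by crossings, i.e.\ in the $\mathbf{w}$-diagram); so just to the left of the $(a_r,b_r)$ bridge it is $a_r$ that lies above $b_r$. Your above-both/below-both dichotomy survives this correction, but the misreading would matter if one tried to complete your positional tracking. Second, the lemma is only about planarity; the inversion condition $f_{r-1}(a_r)>f_{r-1}(b_r)$ is verified separately later in the paper (using that $u$ is anti-Grassmannian and that the partial wiring diagram is reduced), and your one-line justification that ``$(a_r,b_r)$ being the next transposition in the factorization forces it to be an inversion of $f_{r-1}$'' is circular --- that is exactly what has to be proved there.
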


\begin{proof}
We argue the case where $b_r$ is the isolated wire.  The other case is analogous.  By Lemma \ref{last}, the left endpoint of wire $b_r$ must be $> k$.  

Suppose there is a non-isolated wire $c$ with \begin{math}a_r < c < b_r,\end{math} and let $(c,c')$ be a bridge on wire $c$. Since $a_r$ and $c$ are both non-isolated, the inductive assumption implies that wire $c$ lies below $a_r$ to the right of the bridges 
\begin{displaymath}(a_1,b_1),\ldots,(a_{r-1},b_{r-1})\end{displaymath}
already inserted.  In particular, wire $c$ lies below wire $a_r$ at the horizontal position where we insert the $(a_r,b_r)$ bridge, and hence below wire $b_r$ as well.  Hence, we must have 
\begin{displaymath}(c,c') \rightarrow (a_r,b_r) \rightarrow (b_r \downarrow c).\end{displaymath}  Since the left endpoint of wire $b_r$ is $>k$, this contradicts lemma \ref{escape}.

We have shown there is no non-isolated wire $c$ with \begin{math}a_r < c < b_r.\end{math}  This, together with the inductive assumption and that fact that $a_r$ is non-isolated, ensures that adding that the $(a_r,b_r)$-bridge preserves the planarity condition.  See Figure \ref{cases}.

\end{proof}

\begin{figure}[ht]
\centering
 \begin{subfigure}[t]{0.4 \textwidth}
 \centering
 \begin{tikzpicture}[scale = 0.9]
 \draw (1.5,-0.35) to [out = 0, in = 180] (2.25,-0.5) to (2.75,-0.5) to [out = 0, in = 180] (6,0.5)
 to (6.5,0.5) to [out = 0, in = 180] (8,1);
 \dbrd{2.25}{-1};
 \dbrd{6}{0};
 \draw (1.5,-1.15) to [out = 0, in = 180] (2.25,-1) to (2.75,-1) to [out = 0, in = 180] (8,-2.5);
 \draw (1.5,-2) to [out = 0, in = 180] (6,0) to (6.5,0) to [out = 0, in = 180] (8,-0.5);
 \draw [dashed] (1.5,-2.5) to [out = 0, in = 180] (5.25,-0.75) to (5.75,-0.75) to [out = 0, in = 180] (8,0);
 \draw [dashed] (1.5,-3.25) to [out = 0, in = 180] (5.25,-1.25) to (5.75,-1.25) to [out = 0, in = 180] (8,-2);
 \dbrd{5.25}{-1.25};
 \sdot{8}{1}; \sdot{8}{0}; \sdot{8}{-0.5}; \sdot{8}{-2}; \sdot{8}{-2.5};
 \node[right] at (8,1) {$a_r$}; \node[right] at (8,0) {$c$}; \node[right] at (8,-0.5) {$b_r$};
 \node[right] at (8,-2) {$c'$}; \node[right] at (8,-2.5) {$e$};
 \end{tikzpicture}
 \caption{The case $e > b_r$}
 \end{subfigure}
 \hfill
 \begin{subfigure}[t]{0.4 \textwidth}
 \centering
 \begin{tikzpicture}[scale = 0.9]
\draw (1.5,0.25) to [out = 0, in = 180] (3,0.5) to (3.5,0.5) to [out = 0, in = 180] (5,0.25) to [out = 0, in = 180] (8,-0.25);
\draw (1.5,1.25) to [out = 0, in = 180] (3,1) to (3.5,1) to [out = 0, in = 180] (8,1.5);
\draw (1.5,-0.25) to [out = 0, in = 180] (3,0) to (5,-0.25) to (5.5,-0.25) to [out = 0, in = 180] (8,-1.5);
\draw[dashed] (3,-1.15) to [out = 0, in = 180] (4,-1.25) to (4.5,-1.25) to [out = 0, in = 180] (8,-0.75);
\draw [dashed] (3,-1.8) to [out = 0, in = 180] (4,-1.75) to (4.5,-1.75) to [out = 0, in = 180] (8,-2.25);
\dbrd{5}{-0.25};\dbrd{3}{0.5};\dbrd{4}{-1.75};
\sdot{8}{-0.75}; \sdot{8}{-2.25};\sdot{8}{-0.25}; \sdot{8}{1.5}; \sdot{8}{-1.5};
\node [right] at (8,-0.25) {$a_r$};
\node [right] at (8,1.5) {$e$};
\node [right] at (8,-1.5) {$b_r$};
\node [right] at (8,-0.75) {$c$}; 
\node [right] at (8,-2.25) {$c'$};
 \end{tikzpicture}
 \caption{The case $e < a_r$ and $c' > a_r$}
 \end{subfigure}
\\

 \begin{subfigure}[t]{0.4 \textwidth}
 \centering
 \begin{tikzpicture}[scale = 0.9]
\draw (1,1.35) to [out = 0, in = 180] (1.5, 1.25) to (2,1.25) to [out = 0, in = 180] (4,1) to (4.5,1) to [out = 0, in = 180] (8,1.5);
\draw (3.5,0.4) to [out = 0, in = 180] (4,0.5) to (4.5,0.5) to [out = 0, in = 180] (6,0.25) to (6.5,0.25) to [out = 0, in = 180] (8,-0.25);
\draw (4,0) to [out = 0, in = 180] (6,-0.25) to (6.5,-0.25) to [out = 0, in = 180] (8,-1.25);
\draw[dashed] (1,0.65) to [out = 0, in = 180] (1.5,0.75) to (2,0.75) to [out = 0, in = 180] (5,-1) to [out = 0, in = 180] (8,-0.75);
\dbrd{1.5}{0.75}; \dbrd{6}{-0.25};\dbrd{4}{0.5};
\sdot{8}{-0.75};\sdot{8}{-0.25}; \sdot{8}{1.5}; \sdot{8}{-1.25};
\node [right] at (8,-0.25) {$a_r$};
\node [right] at (8,1.5) {$e$};
\node [right] at (8,-1.25) {$b_r$};
\node [right] at (8,-0.75) {$c$}; 
 \end{tikzpicture}
 \caption{The case $e < a_r$ and $c' = e$}
 \end{subfigure}
 \hfill
  \begin{subfigure}[t]{0.4 \textwidth}
 \centering
 \begin{tikzpicture}[scale = 0.9]
\draw (3.5,1.1) to [out = 0, in = 180] (4,1) to (4.5,1) to [out = 0, in = 180] (8,1.5);
\draw (3.5,0.4) to [out = 0, in = 180] (4,0.5) to (4.5,0.5) to [out = 0, in = 180] (6,0.25) to (6.5,0.25) to [out = 0, in = 180] (8,-0.25);
\draw (4,0) to [out = 0, in = 180] (6,-0.25) to (6.5,-0.25) to [out = 0, in = 180] (8,-1.25);
\draw[dashed] (1,1.4) to [out = 0, in = 180] (1.5,1.25) to (2,1.25) to [out = 0, in = 180] (8,2.25);
\draw[dashed] (1,0.65) to [out = 0, in = 180] (1.5,0.75) to (2,0.75) to [out = 0, in = 180] (5,-1) to [out = 0, in = 180] (8,-0.75);
\dbrd{1.5}{0.75}; \dbrd{6}{-0.25};\dbrd{4}{0.5};
\sdot{8}{-0.75};\sdot{8}{-0.25}; \sdot{8}{1.5}; \sdot{8}{-1.25};\sdot{8}{2.25};
\node [right] at (8,-0.25) {$a_r$};
\node [right] at (8,1.5) {$e$};
\node [right] at (8,-1.25) {$b_r$};
\node [right] at (8,-0.75) {$c$}; 
\node [right] at (8,2.25) {$c'$};
 \end{tikzpicture}
 \caption{The case $e < a_r$ and $c' < a_r$}
 \end{subfigure}
\captionsetup{singlelinecheck=off}
\caption[.]{Adding a bridge $(a_r,b_r)$, where $b_r$ is an isolated wire, and there is a bridge $(a_r,e)$ for some $e$.  In each case, the existence of a wire $c$ with \begin{math}a_r < c < b_r\end{math} and a bridge $(c,c')$ forces $b_r$ to cross some non-isolated wire from above, to the right of all bridges on that wire.  This yields a contradiction.}
\label{cases}
 \end{figure}
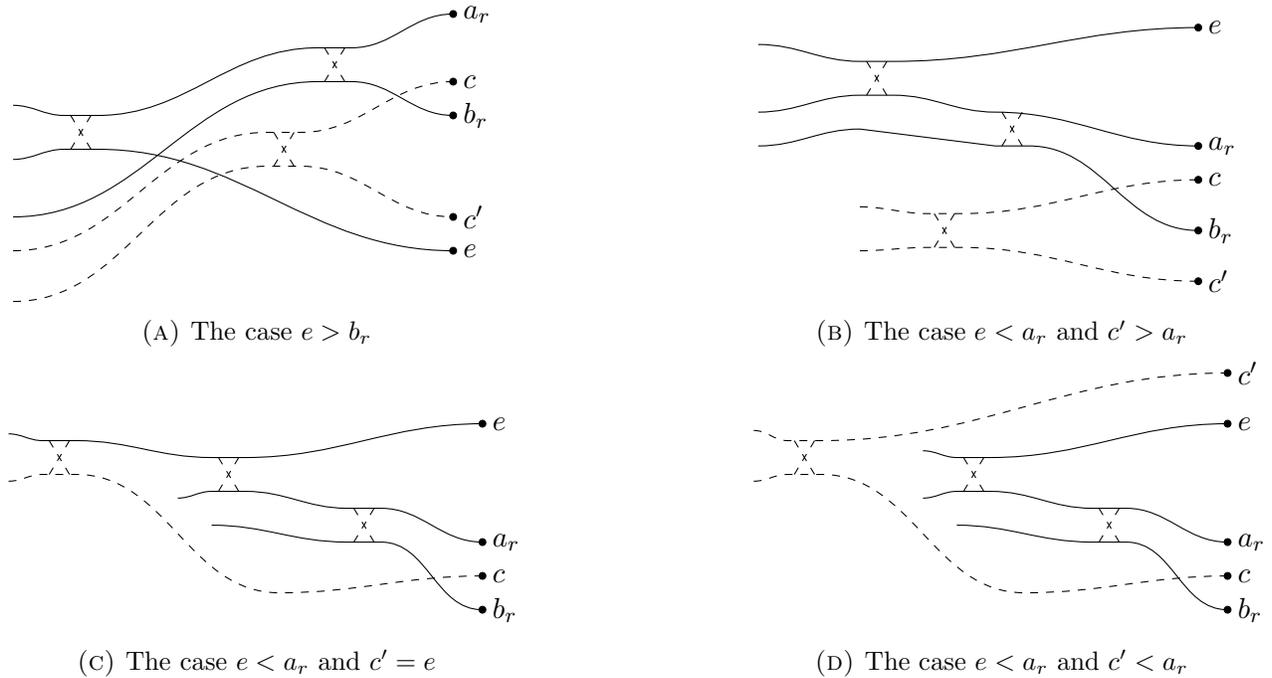

\begin{lem} \label{init}
If wires $a_r$ and $b_r$ are both isolated, then adding the bridge $(a_r,b_r)$ gives a planar diagram.
\end{lem}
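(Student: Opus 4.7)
The plan is to check the three conditions of Remark \ref{threethings} for adding the bridge $(a_r,b_r)$ when both wires are isolated at step $r-1$; once these hold, Proposition \ref{canadd} will ensure the addition produces the required planar bridge graph.

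For conditions (1) and (2), I would apply Lemma \ref{last} in the complete valid bridge diagram $\mathbf{u} \preceq \mathbf{w}$. Since wire $a_r$ is isolated before step $r$, the first bridge on wire $a_r$ in the full diagram is $(a_r,b_r)$ with $b_r>a_r$, so Lemma \ref{last} gives that the left endpoint of wire $a_r$ is $\leq k$. Hence in $G_{r-1}$ wire $a_r$ has become a white lollipop; symmetrically wire $b_r$ is a black lollipop. Condition (1) then follows since $f_{r-1}(a_r)=a_r+n > b_r = f_{r-1}(b_r)$, and the lollipop colors give condition (2).

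The heart of the argument is condition (3): every wire $c$ with $a_r<c<b_r$ must still be a lollipop in $G_{r-1}$, i.e., no bridge touches wire $c$ strictly before $(a_r,b_r)$. Suppose for contradiction there is such a $c$, and let $(a_s,b_s)$ with $s<r$ be its first bridge. Since $\mathbf{w}$ is reduced, wires $a_r$ and $b_r$ cross exactly once in $\mathbf{w}$, at the $(a_r,b_r)$ bridge, so they never cross in the solid $\mathbf{u}$ diagram. In particular, wire $a_r$ lies strictly above wire $b_r$ throughout and the two are adjacent at the bridge location, forcing wire $c$ to lie strictly above wire $a_r$ or strictly below wire $b_r$ there. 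As in Lemma \ref{1wire}, this position forces wire $c$ to cross $a_r$ from above or $b_r$ from below to the right of the bridge, and Lemma \ref{escape} then determines the left endpoint of $c$. Cross-referencing with Lemma \ref{last} applied to $c$ at its first bridge $(a_s,b_s)$ rules out the mixed cases, leaving two compatible sub-cases: (I) wire $c$ above $a_r$ with $c = a_s$ and $b_s > c$; or (II) wire $c$ below $b_r$ with $c = b_s$ and $a_s < c$.

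The main obstacle is eliminating these two remaining configurations. I would focus on case (I); case (II) is dual. Consider the companion wire $b_s$. Reducedness of $\mathbf{w}$ forces wires $c$ and $b_s$ to have no crossings in the solid diagram, so wire $b_s$ lies strictly below wire $c$ throughout. Locating $b_s$ at the $(a_r,b_r)$ bridge (it cannot sit strictly between $a_r$ and $b_r$ by the adjacency of those two wires there) and applying Lemmas \ref{escape} and \ref{last} to wire $b_s$ produces a case analysis analogous to that in Lemma \ref{1wire}, subdivided by whether $b_s < b_r$ or $b_s > b_r$. In each sub-sub-case the forced inequality on the left endpoint of $b_s$ either directly contradicts Lemma \ref{last}, or else produces a new wire strictly between $a_r$ and $b_r$ whose earliest bridge occurs strictly to the left of $(c,b_s)$. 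Iterating this descent, the horizontal position of the problematic bridge strictly decreases at each step, so the process must terminate in a direct contradiction with Lemma \ref{last} or with the hypothesis that $a_r$ and $b_r$ are isolated at step $r-1$.
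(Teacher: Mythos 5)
The heart of the lemma---showing that every wire $c$ with $a_r < c < b_r$ is isolated---is exactly where your argument has a genuine gap. The decisive step is to apply Lemma \ref{escape} to the wires $a_r$ and $b_r$ themselves, taking as the ``bridge'' in that lemma the \emph{earlier} bridge carried by the offending wire $c$: since $c$ is non-isolated there is a bridge $(c,c')$ to the left of the new bridge, and since the right endpoint of $c$ lies between those of $a_r$ and $b_r$ while $a_r$ and $b_r$ are adjacent at the bridge location, one has either $(c,c') \rightarrow (a_r,b_r) \rightarrow (a_r \uparrow c)$ or $(c,c') \rightarrow (a_r,b_r) \rightarrow (b_r \downarrow c)$; Lemma \ref{escape} then forces the left endpoint of $a_r$ to be $> k$, or that of $b_r$ to be $\leq k$, contradicting Lemma \ref{last}. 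You instead apply Lemma \ref{escape} only to the wires $c$ and $b_s$ \emph{relative to the $(a_r,b_r)$ bridge}, which yields conclusions perfectly consistent with Lemma \ref{last} (your cases (I) and (II)), so no contradiction is available at that stage, and the burden falls entirely on your proposed descent.

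That descent is not justified and fails in natural branches. In your case (I), take the companion wire $b_s$ with $c < b_s < b_r$ lying below $b_r$ at the new bridge: your applications of Lemmas \ref{escape} and \ref{last} to $b_s$ only say its left endpoint is $> k$ and its first bridge has lower partner $< b_s$, which is already satisfied by the bridge $(c,b_s)$ itself. In that branch you obtain neither a direct contradiction nor a problematic bridge strictly further left, so the claim that ``the horizontal position of the problematic bridge strictly decreases'' is false as stated and the iteration stalls; the contradiction there comes precisely from applying Lemma \ref{escape} to $a_r$ with respect to $(c,b_s)$, a step your argument never takes. A secondary issue: reducing the lemma to the three conditions of Remark \ref{threethings} plus Proposition \ref{canadd} only produces the abstract bridge graph, whereas the lemma (as it is used together with Lemmas \ref{2wires} and \ref{1wire} in the induction) asserts that the drawing obtained from the diagram is itself a planar embedding; this is why the paper's proof also verifies that the new bridge lies above every non-isolated wire $c > b_r$ and below every non-isolated wire $c' < a_r$, a nesting condition your proposal does not address.
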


\begin{proof}
By Lemma \ref{last}, the left endpoint of $a_r$ is $\leq k$, while the left endpoint of $b_r$ is $> k$.  Let $c$ be a non-isolated wire, so that we have a bridge $(c,c')$ to the left of the $(a_r,b_r)$ bridge.  Suppose toward a contradiction that \begin{math}a_r < c < b_r.\end{math}  Then either we have 
\begin{displaymath} (c,c') \rightarrow (a_r,b_r) \rightarrow (a_r \uparrow c)\end{displaymath}
or we have 
\begin{displaymath} (c,c') \rightarrow (a_r,b_r) \rightarrow (b_r \downarrow c).\end{displaymath}
In the first case, Lemma \ref{escape} implies that the left endpoint of $a_r$ is $> k$, and in the second, Lemma \ref{escape} implies that the left endpoint of $b_r$ is $\leq k$.  In either case, we have a contradiction, so each wire $c$ with \begin{displaymath}a_r < c < b_r\end{displaymath} is isolated.  See Figure \ref{iso}.

Next, suppose $c > b_r$.  To prove planarity, we must show that wire $b_r$ lies above wire $c$ to the right of the $(a_r,b_r)$ bridge.  It is enough to show that the $(a_r,b_r)$ bridge lies above wire $c$.  
Suppose the $(a_r,b_r)$ bridge lies below wire $c$.  Then we have 
\begin{displaymath}(c,c') \rightarrow (a_r,b_r) \rightarrow (a_r \uparrow c),\end{displaymath}
which gives a contradiction as before.

Next, let $c'$ be a wire which is not isolated, and suppose we have $c' < a_r$.  By an analogous argument, the $(a_r,b_r)$ bridge must be inserted below wire $c'$.  Hence, adding a bridge $(a_r,b_r)$ preserves the planarity condition, and the proof is complete.  \end{proof}

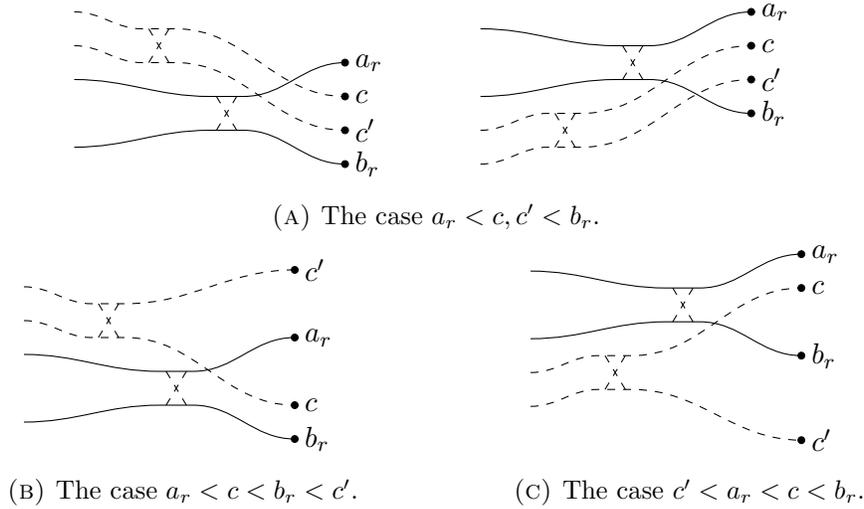
\begin{figure}[ht]
\centering 
\begin{subfigure}[t]{\textwidth}
\centering
\begin{tikzpicture}[scale = 0.9]
\draw[dashed] (0,1.75) to [out = 0, in = 180] (1,1.5) to (1.5,1.5) to [out = 0, in = 180] (4,0.5);
\draw[dashed] (0,2.25) to [out = 0, in = 180] (1,2) to (1.5,2) to [out = 0, in = 180] (4,1);
\draw (0,1.25) to [out = 0, in = 180] (2,1) to (2.5,1) to [out = 0, in = 180] (4,1.5);
\draw (0,0.25) to [out = 0, in = 180] (2,0.5) to (2.5,0.5) to [out = 0, in = 180] (4,0);
\dbrd{2}{0.5}; \dbrd{1}{1.5};
\node [right] at (4,0) {$b_r$}; \node [right] at (4,0.5) {$c'$}; \node [right] at (4,1) {$c$}; \node [right] at (4,1.5) {$a_r$};
\sdot{4}{0};\sdot{4}{0.5};\sdot{4}{1};\sdot{4}{1.5};
\begin{scope}[xshift = 6 cm, yshift = 0.75 cm]
\draw (0,1.25) to [out = 0, in = 180] (2,1) to (2.5,1) to [out = 0, in = 180] (4,1.5);
\draw (0,0.25) to [out = 0, in = 180] (2,0.5) to (2.5,0.5) to [out = 0, in = 180] (4,0);
\draw[dashed] (0,-0.25) to [out = 0, in = 180] (1,0) to (1.5,0) to [out = 0, in = 180] (4,1);
\draw[dashed] (0,-0.75) to [out = 0, in = 180] (1,-0.5) to (1.5,-0.5) to [out = 0, in = 180] (4,0.5);
\dbrd{2}{0.5}; \dbrd{1}{-0.5};
\node [right] at (4,0) {$b_r$}; \node [right] at (4,0.5) {$c'$}; \node [right] at (4,1) {$c$}; \node [right] at (4,1.5) {$a_r$};
\sdot{4}{0};\sdot{4}{0.5};\sdot{4}{1};\sdot{4}{1.5};
\end{scope}
\end{tikzpicture}
\caption{The case \begin{math}a_r < c,c' <b_r.\end{math}}
\end{subfigure}
\\
\begin{subfigure}[t]{0.4\textwidth}
\centering
\begin{tikzpicture}[scale = 0.9]
\draw[dashed] (0,2.25) to [out = 0, in = 180] (1,2) to (1.5,2) to [out = 0, in = 180] (4,2.5);
\draw[dashed] (0,1.75) to [out = 0, in = 180] (1,1.5) to (1.5,1.5) to [out = 0, in = 180] (4,0.5);
\draw (0,1.25) to [out = 0, in = 180] (2,1) to (2.5,1) to [out = 0, in = 180] (4,1.5);
\draw (0,0.25) to [out = 0, in = 180] (2,0.5) to (2.5,0.5) to [out = 0, in = 180] (4,0);
\dbrd{2}{0.5}; \dbrd{1}{1.5};
\node [right] at (4,0) {$b_r$}; \node [right] at (4,0.5) {$c$}; \node [right] at (4,1.5) {$a_r$}; \node [right] at (4,2.5) {$c'$}; 
\sdot{4}{0};\sdot{4}{0.5};\sdot{4}{1.5};\sdot{4}{2.5};
\end{tikzpicture}
\caption{The case \begin{math}a_r < c<b_r<c'.\end{math}}
\end{subfigure}
\begin{subfigure}[t]{0.4\textwidth}
\centering
\begin{tikzpicture}[scale = 0.9]
\draw (0,1.25) to [out = 0, in = 180] (2,1) to (2.5,1) to [out = 0, in = 180] (4,1.5);
\draw (0,0.25) to [out = 0, in = 180] (2,0.5) to (2.5,0.5) to [out = 0, in = 180] (4,0);
\draw[dashed] (0,-0.25) to [out = 0, in = 180] (1,0) to (1.5,0) to [out = 0, in = 180] (4,1);
\draw[dashed] (0,-0.75) to [out = 0, in = 180] (1,-0.5) to (1.5,-0.5) to [out = 0, in = 180] (4,-1.25);
\dbrd{2}{0.5}; \dbrd{1}{-0.5};
\node [right] at (4,0) {$b_r$}; \node [right] at (4,-1.25) {$c'$}; \node [right] at (4,1) {$c$}; \node [right] at (4,1.5) {$a_r$};
\sdot{4}{0};\sdot{4}{-1.25};\sdot{4}{1};\sdot{4}{1.5};
\end{tikzpicture}
\caption{The case \begin{math}c'<a_r < c <b_r.\end{math}}
\end{subfigure}\captionsetup{singlelinecheck=off}
\caption[.]{Adding a bridge $(a_r,b_r)$ between two isolated wires.  The existence of a wire $c$ with \begin{math}a_r < c < b_r\end{math} and a bridge $(c,c')$ forces either \begin{math}(c,c') \rightarrow (a_r \uparrow c)\end{math} or \begin{math}(c,c') \rightarrow (b_r \downarrow c)\end{math} which gives a contradiction.}
\label{iso}
\end{figure}

Combining lemmas $\ref{init},$ $\ref{2wires},$ and $\ref{1wire}$, we see that adding bridge $(a_r,b_r)$ always yields a planar bridge diagram.  By induction, we have the following result.

\begin{prop}
If \begin{math}\mf{u} \preceq \mf{w}\end{math} is a PDS, with $u$ anti-Grassmannian, than the bridge diagram for \begin{math}\mf{u} \preceq \mf{w}\end{math} is planar.  
\end{prop}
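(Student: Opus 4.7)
The plan is to proceed by induction on the number of bridges $r$ in the diagram $\mf{u} \preceq \mf{w}$. The base case $r=0$ is trivial: with no bridges, the diagram is simply the reduced wiring diagram for $\mf{u}$ itself, which carries no planarity obligation (all wires are isolated, so the boundary-leaf prescription immediately produces a lollipop graph for $u([k])$, which is manifestly planar).

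For the inductive step, suppose that inserting the first $r-1$ bridges $(a_1,b_1),\ldots,(a_{r-1},b_{r-1})$ yields a planar bridge diagram, and we wish to show that adding $(a_r,b_r)$ preserves planarity. First I must verify that the intermediate diagram remains valid in the sense of Definition~\ref{valid}, so that Lemmas \ref{last} and \ref{escape} are available at the inductive stage. By Proposition~\ref{induce}, the subdiagram obtained by replacing the remaining bridges $(a_r,b_r),\ldots,(a_d,b_d)$ with crossings is a reduced wiring diagram for some $v \in S_n$ with $v \leq w$, and $\mf{u}$ is the PDS of this diagram. Since $u$ is anti-Grassmannian, Lemma~\ref{korder} gives $u \leq_k v$, so the intermediate diagram is valid and the hypotheses required for Lemmas~\ref{last} and \ref{escape} are satisfied.

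Next, I split into three cases according to the isolation status of the wires $a_r$ and $b_r$ at the moment the $r$-th bridge is added. If both $a_r$ and $b_r$ are non-isolated, Lemma~\ref{2wires} shows that adding $(a_r,b_r)$ corresponds to inserting a new edge between two adjacent legs of the plabic graph constructed so far, which preserves planarity. If exactly one of $a_r, b_r$ is isolated, Lemma~\ref{1wire} handles the case; and if both are isolated, Lemma~\ref{init} handles it. In each case the key input is Lemma~\ref{escape}, which forbids the intermediate wires between $a_r$ and $b_r$ from having bridges that would obstruct planarity, because such bridges would force a wire to cross $a_r$ or $b_r$ in a direction incompatible with the anti-Grassmannian constraint on the left endpoint.

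The main conceptual step is really the setup rather than the induction itself: recognizing that anti-Grassmannianity of $u$ plus $u \leq_k w$ gives the strong control of Lemma~\ref{escape} at every intermediate stage, and that Proposition~\ref{induce} propagates the PDS condition to every truncation. Given this, the inductive step is a clean case analysis already packaged into Lemmas~\ref{init}, \ref{1wire}, and \ref{2wires}. Thus by induction on $r$, after inserting all $d$ bridges the diagram is planar, which is the claim.
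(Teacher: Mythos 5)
Your proposal is correct and follows essentially the same route as the paper: induction on the number of bridges, with the validity of each truncated diagram secured by Proposition~\ref{induce} together with Lemma~\ref{korder} (exactly the paper's reduction), and the inductive step handled by the case analysis of Lemmas~\ref{2wires}, \ref{1wire}, and \ref{init} via Lemmas~\ref{last} and \ref{escape}. The only quibble is your phrasing of Proposition~\ref{induce} (one keeps the leftmost bridges and replaces \emph{them} with crossings to obtain $\mf{v}$, rather than replacing the remaining bridges $(a_r,b_r),\ldots,(a_d,b_d)$), but this does not affect the argument.
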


\subsection{Proving the graph is reduced}

Given a PDS \begin{math}\mf{u} \preceq \mf{w}\end{math} with $u$ anti-Grassmannian, we have shown that we can build a plabic graph by taking the lollipop graph $u([k])$, adding bridges 
\begin{displaymath}(a_1,b_1),\ldots,(a_d,b_d)\end{displaymath}
as specified by \eqref{gens}, and then inserting degree-two vertices as needed to make the graph bipartite. To show that this process yields a bridge graph, it remains to check the third condition of Remark \ref{threethings}.  Suppose that adding bridges 
\begin{displaymath}(a_1,b_1),\ldots,(a_{r-1},b_{r-1})\end{displaymath}
gives a reduced graph, with bounded affine permutation $f_{r-1}.$
We claim that \begin{math}f_{r-1}(a_r) > f_{r-1}(b_r).\end{math}

Let $\mf{v}$ be the reduced wiring diagram for some $v \in S_n$ obtained from $\mf{u}$ by replacing the bridges up to \begin{math}(a_{r-1},b_{r-1})\end{math} with crossings. Inserting the bridge $(a_r,b_r)$ at the appropriate place in the wiring diagram for $\mf{v}$ gives a reduced wiring diagram for some \begin{math}v' \gtrdot v,\end{math} so we must have \begin{math}v^{-1}(a_{r}) < v^{-1}(b_{r}).\end{math}  By construction, we have
\begin{equation} \label{image} f_{r-1}(c)=
\begin{cases}
uv^{-1}(c)+n& v^{-1}(c) \leq k\\
uv^{-1}(c) & \text{ otherwise}
\end{cases}
\end{equation}

If \begin{math} v^{-1}(a_r)  \leq k\end{math} and \begin{math}v^{-1}(b_r) > k\end{math}, the inequality \begin{math}f_{r-1}(a_r) > f_{r-1}(b_r)\end{math} follows easily from (\ref{image}).  Otherwise, either we have 
\begin{equation}v^{-1}(a_{r}),v^{-1}(b_{r}) \in \{1,\ldots,k\}\end{equation}
or we have
 \begin{equation}v^{-1}(a_{r}),v^{-1}(b_{r}) \in \{k+1,\ldots,n\}.\end{equation} 
 The claim then follows since $u$ is anti-Grassmannian, and is hence order-reversing on the sets \begin{math}\{1,\ldots,k\}\end{math} and \begin{math}\{k+1,\ldots,n\}.\end{math}

Hence, starting with the lollipop graph for $u([k])$ and adding bridges as in \eqref{gens} gives a reduced bridge graph for \begin{math}\Pi_{\langle u,w \rangle_k}.\end{math}  This proves the first direction of Theorem \ref{main}.

\begin{prop}
Every projected Deodhar parametrization for a positroid arises from a bridge graph.  
\end{prop}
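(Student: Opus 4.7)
The plan is to assemble the lemmas and propositions of this section into a single proof. Fix a positroid variety $\Pi = \Pi_{\langle u, w \rangle_k}$, a reduced word $\mf{w}$ for $w$, and the unique PDS $\mf{u} \preceq \mf{w}$ for $u$; this data determines an arbitrary projected Deodhar parametrization of $\Pi$.

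First, I would reduce to the anti-Grassmannian case. By Lemma \ref{antiG}, choose $z \in S_k \times S_{n-k}$ and a reduced word $\mf{z}$ of $z$ so that $uz$ is anti-Grassmannian and both $uz \leq_k wz$ and the factorizations are length additive. Concatenating $\mf{u}$ with $\mf{z}$ gives a PDS $\mf{uz} \preceq \mf{wz}$, and as noted right after Lemma \ref{antiG}, this new PDS yields the same projected Deodhar parametrization of $\Pi$. So without loss of generality I assume $u$ is anti-Grassmannian.

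Next, I would produce the candidate bridge graph. Using equation \eqref{gens}, extract the sequence of transpositions $(a_1,b_1), \ldots, (a_d,b_d)$ with $a_r < b_r$ from $uw^{-1}$, and form the candidate graph $G$ by starting with the lollipop graph for $u([k])$ and inserting these as bridges in order. To verify this is actually a bridge graph, I need to check the three conditions of Remark \ref{threethings} at each stage. The previous proposition shows the bridge diagram is planar, so the resulting graph is planar as well; Lemma \ref{last} combined with the planarity statement confirms condition (2) on boundary leaf colors (as noted in the remark after Lemma \ref{last}); and the reducedness/inequality argument at the end of Subsection 3.2 (using that $v^{-1}(a_r) < v^{-1}(b_r)$ together with the anti-Grassmannian property of $u$) verifies condition (1), namely $f_{r-1}(a_r) > f_{r-1}(b_r)$. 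Condition (3) (interior lollipops) follows from planarity and Lemma \ref{escape}, since no non-isolated wire can lie strictly between $a_r$ and $b_r$ at the stage where $(a_r, b_r)$ is added. By induction on $r$, $G$ is a reduced bridge graph, and \eqref{rightperm} shows $f_G = f_{u,w}$, so $\Pi_G = \Pi$.

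Finally, I would check that the two parametrizations agree. This is essentially already done in the body of the excerpt: each bridge $(a_r,b_r)$ of $G$ contributes a factor $x_{(a_r,b_r)}(\pm t_r)$ to the bridge-graph parametrization, with sign determined by the parity of $|u([k]) \cap [a_r+1, b_r-1]|$, and by the lemma computing $\beta_{\bar{r}}$, the projected Deodhar parametrization factors in the identical form \eqref{betas} with the same signs. Both parametrizations start from the matrix representing the Plücker coordinate $u([k])$ and right-multiply by the same ordered product of elementary matrices, so they coincide as maps $(\complexes^\times)^d \to \Pi$. The only real obstacle was verifying planarity and reducedness of the candidate graph, and both have been handled in the preceding subsections; what remains is simply to collect these pieces into the final statement.
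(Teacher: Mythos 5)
Your proposal is correct and follows essentially the same route as the paper, which proves this proposition precisely by assembling the section's results: the anti-Grassmannian reduction via Lemma \ref{antiG}, the planarity lemmas (with Lemmas \ref{last} and \ref{escape} handling the lollipop and leaf-color conditions of Remark \ref{threethings}), the reducedness inequality $f_{r-1}(a_r) > f_{r-1}(b_r)$ from the anti-Grassmannian property, and the sign computation for the factors $\beta_{\bar r}$ showing the two matrix products coincide. The only discrepancies are cosmetic (a subsection-number slip, and your inequality in condition (1) matches the paper's actual argument rather than the typo in Remark \ref{threethings}), so there is no gap.
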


\section{From bridge graphs to PDS's}
\label{converse}

We have shown that each projected Deodhar parametrization corresponds to a bridge graph.  Next, we show the converse: every parametrization arising from a bridge graph coincides with some projected Deodhar parametrization.

We introduce a bit more terminology for discussing bridge diagrams.  
We call the portion of a bridge diagram to the right of the leftmost bridge, including the bridge itself, the \emph{restricted part} of the diagram; we call the remainder of the diagram the \emph{free part}.  An $(a,b)$-\emph{junction} refers to either an $(a,b)$-bridge or a crossing between wires $a$ and $b$.

Suppose we have a bridge graph $G$ for some positroid variety in $\Gr(k,n).$  Adding a white lollipop to $G$ anywhere along the boundary yields a bridge graph for a positroid variety in \begin{math}\text{Gr}(k+1,n+1)\end{math} while adding a black lollipop yields a bridge graph for a positroid variety in $\text{Gr}(k,n+1)$.  

\begin{lem}\label{addfp}
Let $G$ be a bridge graph, and let $G'$ be a graph obtained from $G$ by adding a lollipop.  Suppose we have a bridge diagram corresponding to $G$.  Then we can construct a bridge diagram for $G'$.
\end{lem}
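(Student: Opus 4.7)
The plan is to adjoin a single isolated wire to the given bridge diagram for $G$. By Lemma~\ref{antiG} we may assume that the bridge diagram arises from a PDS $\mf{u} \preceq \mf{w}$ with $u \in S_n$ anti-Grassmannian of type $(k,n)$. Suppose the new lollipop is white, added at boundary position $i \in [n+1]$; the black case is symmetric. Let $\phi \colon [n] \to [n+1] \setminus \{i\}$ be the order-preserving injection, and set $j := 1 + |\{a \in [k] \colon u(a) \geq i\}|$. Let $u' \in S_{n+1}$ be the anti-Grassmannian permutation of type $(k+1, n+1)$ with $u'([k+1]) = \phi(u([k])) \cup \{i\}$, so that $u'(j) = i$.

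I would then construct the new bridge diagram by inserting a new wire with right endpoint $i$ and left endpoint $j$ into the diagram for $\mf{u} \preceq \mf{w}$, relabeling all existing right and left endpoints via $\phi$. The new wire carries no bridges; the existing bridges are retained (with relabeled endpoints) at their original horizontal positions. I would route the new wire so that all of its crossings with existing wires occur in the free part of the diagram, to the left of the leftmost bridge. Then throughout the restricted part, the new wire lies consistently above or below each existing wire, so every old bridge remains between two wires that are adjacent at its horizontal position.

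Three things need to be checked: (a) the underlying wiring diagram represents $u'$ and is reduced; (b) the new bridge diagram still satisfies the PDS condition of Remark~\ref{nocross}; and (c) the bridge graph obtained from the new bridge diagram is $G'$. For (a), the new wire crosses each existing wire at most once (one crossing per inversion of $u'$ involving position $j$), and the crossings can be arranged in the free part in the correct horizontal order. Step (b) is immediate: each existing bridge is between the same pair of wires as in the original, and those two wires still do not cross again to the right. For (c), apply Lemma~\ref{last}: the new isolated wire has left endpoint $j \leq k+1$, so it produces a white boundary leaf at position $i$, exactly matching the added lollipop; the remainder of $G'$ reproduces $G$ after relabeling via $\phi$.

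The main obstacle I expect is the geometric routing in step (a): the crossings of the new wire with existing wires must occur in the combinatorial order dictated by $u'$ and must all lie in the free part of the diagram. This is possible because we can always make the free part geometrically as wide as needed, by sliding all existing bridges to the right without altering the combinatorial structure of the diagram.
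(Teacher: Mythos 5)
There is a genuine gap, and it is exactly at the step you flag as the ``main obstacle,'' but the problem is not geometric width --- it is combinatorial and cannot be fixed by widening the free part. You propose to route the new wire so that all of its crossings occur in the free part, so that it is isolated throughout the restricted part. Since wires are labeled by their right endpoints, the new wire's vertical position at the right edge is forced: it lies below every wire with right endpoint $<i$ and above every wire with right endpoint $>i$, and if it crosses nothing in the restricted part this relative order persists at every horizontal position there. Now in general $G$ has a bridge $(a,b)$ whose span strictly contains the new lollipop position $i$ (this is allowed precisely because Proposition \ref{canadd} permits lollipops between the feet of a bridge). At the horizontal position of that bridge, wires $a$ and $b$ must be vertically adjacent for the dashed cross to represent a simple transposition, but your new wire sits strictly between them there. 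So either the bridge is no longer between adjacent wires --- and then the picture is not a bridge diagram at all, since bridges must correspond to letters $s_{i_j}$ of a reduced word --- or the new wire must cross wire $a$ or wire $b$ somewhere to the right of that bridge, i.e.\ inside the restricted part, contradicting your routing. A minimal example: take $G$ the one-bridge graph for the big cell of $\Gr(1,2)$ (single bridge $(1,2)$) and add a black lollipop at position $2$; the relabeled bridge is $(1,3)$ and the new wire, with right endpoint $2$, must cross wire $3$ to the right of that bridge. Your step (c) via Lemma \ref{last} and the reducedness check in (a) are fine as far as they go, but the assertion in (b) that ``every old bridge remains between two wires that are adjacent at its horizontal position'' is false whenever some bridge spans the new boundary position.

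This is precisely the difficulty the paper's proof is built to handle: it inserts the new wire into the \emph{restricted} part, cutting that part into sections $B_0,B_1,\ldots$ determined by the junctions $(c_i,e_i)$, and threads the new wire through them so that all of its crossings have the form $(q\uparrow c)$ (for a black lollipop), which keeps every bridge between adjacent wires, keeps the diagram reduced, and lets Lemma \ref{escape} and Remark \ref{nocross} certify the PDS property; only afterwards is the free part rebuilt, with a separate (nontrivial) verification that $x'\leq_{(r)}u'$ so the concatenation is reduced. To repair your argument you would have to replace the ``all crossings in the free part'' routing with some such weaving of the new wire through the restricted part, at which point you are essentially reproducing the paper's construction; you would also need to spell out the length-additivity argument for the free part, which your step (a) currently treats as automatic.
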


\begin{proof}

We argue the case of adding a black lollipop; the case of a white lollipop is analogous.  By assumption, we have a valid bridge diagram \begin{math}\mf{u} \preceq \mf{w}\end{math} corresponding to $G$, where $u,w \in S_n$.  Adding a black lollipop to $G$ and renumbering boundary vertices  gives a reduced bridge graph $G'$ for some \begin{math}\Pi_{\langle u',w' \rangle_k}\end{math} with $u'$ anti-Grassmannian.  Note that $u'$ and $w'$ are uniquely determined by the position of the new lollipop.

The restricted part of the diagram \begin{math}\mf{u} \preceq \mf{w}\end{math} is a reduced bridge diagram $B$ corresponding to some \begin{math} \mf{x} \preceq \mf{y}.\end{math}  We claim that we can add a new wire to  $B$ to produce a bridge diagram $B'$ for some $\mf{x'} \preceq \mf{y'}$, which we can then extend to a bridge diagram \begin{math}\mf{u'} \preceq \mf{w'} \end{math} corresponding to $G'$.

First, suppose the black lollipop is inserted just counterclockwise of position $1$, and the remaining lollipops are re-numbered $2$ through $n$.  Then we simply add a new $1$-wire which runs straight across the top of the diagram for $\mf{x}$, and renumber the endpoints of the existing wires appropriately.  Otherwise, the black lollipop is inserted just clockwise of position $q$, for some $q \geq 2$. We add a new right endpoint $q$ directly below $q-1$ in the bridge diagram for $\mf{x} \preceq \mf{y}$, and renumber the remaining right endpoints accordingly.  We then construct the diagram \begin{math}\mf{x}' \preceq \mf{y}'\end{math} in sections, working from right to left.

First, we divide the diagram $B$ into sections, as show in Figure \ref{addlolly}.  Let $c_0=q$.  Find the rightmost point where either $c_0$ crosses another wire $e_0$, or there is a bridge $(c_0,e_0)$ with $e_0 > c_0$.  
Let $B_0$ denote the portion of $B$ which begins just to the left of the $(c_0,e_0)$-junction, and extends to the rightmost boundary of $B$. 
Let $c_1$ denote either $c_0$ or $e_0$, whichever wire lies below the other immediately to the left of the $(c_0,e_0)$-junction.  

Now, suppose we have already defined sections \begin{math} B_0,\ldots,B_{i-1},\end{math} and fixed wire $c_i$. If there is no point to the left of $B_{i-1}$ in $B$ where either $c_i$ crosses another wire $e_i$, or there is a bridge $(c_i,e_i)$ with $e_i> c_i$, then let $B_i$ be the portion of $B$ to the left of $B_{i-1}$.  Otherwise, let $B_i$ be the portion of $B$ whose left edge is just to the left of the $(c_i,e_i)$-junction, and whose right edge is the boundary of $B_{i-1}$.  Let $c_{i+1}$ be either $c_i$ or $e_i$, whichever is lower to the left of the $(c_i,e_i)$-junction.  Continuing in this fashion, we divide all of $B$ into sections, as in Figure \ref{addlolly}.

Next, we modify each $B_i$ by adding a segment $q_i$ of wire $q$.  The path of $q_i$ depends on the nature of the $(c_i,e_i)$ junction, as shown in Figure \ref{segment}.  If \begin{math}(e_i \uparrow c_i),\end{math} we let $q_i$ lie  immediately below $e_i$ to the left of the crossing, and immediately below $c_i$ to the right of the crossing.  If \begin{math}(e_i \downarrow c_i),\end{math} then we let $q_i$ lie below $c_i$ to the left of the crossing; let \begin{math}(q_i \uparrow e_i)\end{math} immediately to the right of the crossing; and let $q_i$ run immediately below $c_i$ to the boundary of $B_i$.  Finally, if the $(e_i,c_i)$ junction is a bridge with $e_i > c_i$, we let $q_i$ start below wire $e_i$; let \begin{math}(q_i \uparrow e_i)\end{math} immediately to the right of the bridge; and let $q_i$ run directly below $c_i$ to the boundary of $B_i$.
In each case, we shift the wires below $q_i$ downward, to obtain a wiring diagram which satisfies our conventions.  See Figure \ref{segment} and Figure \ref{addlolly}.

\begin{figure}[ht]
 \centering
 \begin{subfigure}[t]{0.4\textwidth}
 \centering
 \begin{tikzpicture}[scale = 0.9]
 \draw [very thick] (-1,-0.5) to [out = 0, in = 225] (0,0) to [out = 45, in = 180] (3,1);
  \draw (-1,0.5) to [out = 0, in = 135] (0,0) to [out = 315, in = 180] (3,-1);
  \draw [dashed] (-1,-1) to [out = 0, in = 225] (0,-0.5) to [out = 45, in = 180] (3,0.5);
  \node [right] at (3,1) {$c_i$}; 
  \node [right] at (3,-1) {$e_i$}; 
  \node [right] at (3,0.5) {$q_i$};
   \end{tikzpicture}
 \caption{The case \begin{math}(e_i \downarrow c_i).\end{math}}
 \end{subfigure}
 \begin{subfigure}[t]{0.4\textwidth}
 \centering
 \begin{tikzpicture}[scale = 0.9]
  \draw (-1,-0.5) to [out = 0, in = 225] (0,0) to [out = 45, in = 180] (3,0.75);
  \draw [very thick] (-1,0.5) to [out = 0, in = 135] (0,0) to [out = 315, in = 180] (3,-0.75);
  \draw [dashed] (-1,-1) to [out = 0, in = 180] (0,-0.75) to [out = 0, in = 160] (0.5,-1) to [out = 340, in = 180] (3,-1.25);
  \node [right] at (3,0.75) {$e_i$}; 
  \node [right] at (3,-0.75) {$c_i$}; 
 \node [right] at (3,-1.25) {$q_i$};
 \end{tikzpicture}
 \caption{The case $(e_i \uparrow c_i)$.}
 \end{subfigure}
  \begin{subfigure}[t]{0.4\textwidth}
 \centering
\begin{tikzpicture}[scale = 0.9]
\draw [very thick] (-1,0.5) to [out = 0, in =180] (-0.125,0.25) to (0.25,0.25) to [out = 0, in = 180] (3,1);
\draw (-1,-0.5) to [out = 0, in = 180] (-0.125,-0.25) to (0.125,-0.25) to [out = 0, in = 180] (3,-1);
 \draw [dashed]  (-1,-1) to [out =0 , in = 180] (0,-0.75) to [out = 0, in = 180] (3,0.5);
 \dbrd{-0.125}{-0.25};
 \node [right] at (3,1) {$c_i$};
  \node [right] at (3,0.5) {$q_i$}; 
  \node [right] at (3,-1) {$e_i$};
 \end{tikzpicture}
 \caption{A $(e_i,c_i)$ bridge, $e_i > c_i$.}
 \end{subfigure}
 \caption{Adding a segment $q_i$ of wire $q$ to the bridge diagram $B_i$.  Here $q$ corresponds to a black lollipop.}
   \label{segment}
 \end{figure}
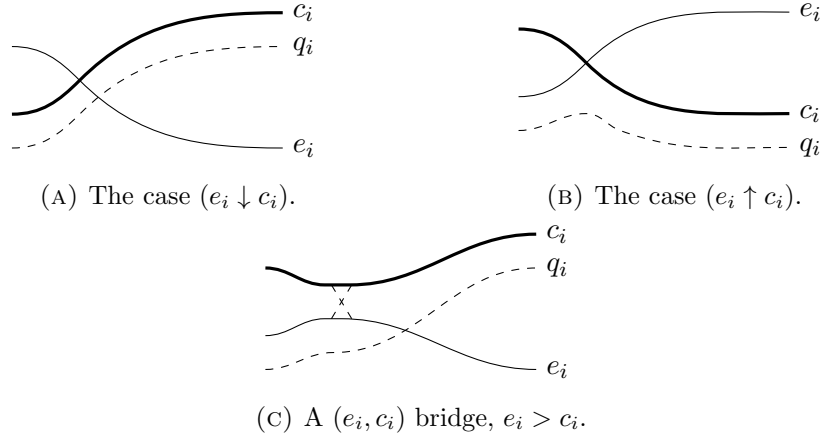
 
Adding the segment $q_i$ to each section $B_i$ of $B$ yields a bridge diagram $B'$ with underlying wiring diagram $\mf{x'}$; by construction, the $q_i$ form a unbroken wire $q$.  We claim that $B'$ is reduced.   Let $\mf{y'}$ be the wiring diagram we obtain from $\mf{x'}$ by adding the bridges inherited from the diagram $B$, and replacing each bridge with a crossing.  Since $B$ is reduced, it suffices to show that wire $q$ does not cross any wire more than once, in either $\mf{x'}$ or $\mf{y'}$.  This follows, since every crossing involving wire $q$ has the form \begin{math}(q \uparrow c)\end{math} for some $c$.

Hence, we have a reduced bridge diagram $B'$ corresponding to a subexpression \begin{math}\mf{x'} \preceq \mf{y'},\end{math} which we obtained from $B$ by adding an isolated wire $q$.   By construction, the sequence of bridges 
\begin{displaymath}(a_1,b_1),\ldots,(a_d,b_d)\end{displaymath}
in $B'$ is precisely the sequence of bridges in the graph $G'$.  It suffices to show that we can add additional crossings on the left side of $B'$ to create a valid bridge diagram \begin{math}\mf{u'} \preceq \mf{w'},\end{math} whose restricted part is $B'$.

The free part of the diagram \begin{math}\mf{u} \preceq \mf{w}\end{math} is a reduced wiring diagram $\mf{v}$ for some $v \in S_n$.  By construction, we have 
\begin{equation} x^{-1}u = y^{-1}w = v. \end{equation} 
Now, $u'$ and $w'$ are uniquely determined by $u$ and $w$; the fact that $u'$ is anti-Grassmannian; and the fact that
\begin{equation}u'^{-1}(q)=w'^{-1}(q)>k.\end{equation} It follows that 
\begin{equation}x'^{-1}u'=y'^{-1}w'=v'\end{equation} for some $v' \in S_{n+1}$.  
Consider the concatenation of a reduced diagram $\mf{v'}$ for $v'$ and the diagram $B'$.  We claim that this is the desired bridge diagram \begin{math}\mf{u'} \preceq \mf{w'}\end{math}.  It suffices to check that the resulting diagram is reduced, and represents a PDS; the other needed properties follow from the previous discussion.

First, we show that the wiring diagram $\mf{\bar{u}'}$ obtained by concatenating $\mf{v'}$ and $\mf{x'}$ is reduced.  For this, it is enough to show that the factorization $u' = x'v'$ is length-additive, or equivalently, that \begin{math}x' \leq_{(r)} u'.\end{math}  

By an \textit{inversion} of a permutation $\sigma$, we mean a pair of values $a<b$ with $\sigma^{-1}(a) > \sigma^{-1}(b).$  By the usual criterion for comparison in the right weak order, we must show that every inversion of $x'$ is an inversion of $u'$.
This follows by construction for any inversion which does not involve the value $q$.  
The remaining inversions correspond to wires $b$ which cross wire $q$.  
Since \begin{math}(q \uparrow b)\end{math} for each such $b$, we have only pairs $b > q$ with \begin{math}x'^{-1}(b) < x'^{-1}(q).\end{math}  
We claim \begin{math}u'^{-1}(b) < u'^{-1}(q).\end{math}  If $b \in u'([k])$, this is obvious, since \begin{math}u'^{-1}(q) > k\end{math}; otherwise, $u'^{-1}(b)$, $u'^{-1}(q) \in [k+1,n]$, and the result follows from the fact that $u'$ is anti-Grassmannian.

Next, let $\mf{\bar{w}'}$ be the diagram obtained by adding the bridges inherited from the diagram \begin{math}\mf{u} \preceq \mf{w}\end{math} to the diagram $\mf{\bar{u}'}$.  We must show that $\mf{w}'$ is reduced.  We argue by induction on the number of bridges.  Suppose inserting crossings corresponding to the bridges 
\begin{displaymath} (a_1,b_1),\ldots,(a_{r-1},b_{r-1})\end{displaymath}
gives a reduced diagram $\mf{w^*}$, where the endpoints of the bridges have been renumbered to reflect the addition of wire $q$. 
Consider what happens when we add the bridge $(a_r,b_r)$.  It follows from Remark \ref{nocross}, and the fact that $\mf{u}$ is a PDS for $u$ in $\mf{w}$, that wires $a_r$ and $b_r$ do not cross again after the bridge $(a_r,b_r)$.   Hence, it suffices to show they do not cross before the bridge in the diagram for $\mf{w^*}$.  Since \begin{math}a_r,b_r \neq q,\end{math} it follows from our construction that the two wires cross before the bridge in the diagram for $\mf{w^*}$ if and only if the corresponding wires cross before the corresponding bridge in the diagram for $\mf{w}$.  Since $\mf{w}$ is reduced, these wires do not cross, and $\mf{w'}$ is reduced by induction.  

Hence, setting \begin{math}\mf{u'} = {\mf{\bar{u}'}}\end{math} and \begin{math}\mf{w'}={\mf{\bar{w}'}}\end{math} gives a reduced bridge diagram \begin{math}\mf{u'} \preceq \mf{w'}\end{math}. From the previous paragraph, and Remark \ref{nocross}, we see that $\mf{u'}$ is a PDS of $\mf{w'}$, and hence the diagram is valid.  
This completes the proof.  
\end{proof}

 \begin{figure}[ht]
 \centering
 \begin{subfigure}[t]{0.35 \textwidth}
 \centering
 \begin{tikzpicture}[scale = 0.6]
\draw(3,6) -- (3,0); 
\draw (0,1) -- (3,1); 
\draw (2,2) -- (3,2);
\draw (0,3) -- (3,3);
\draw (2,4) -- (3,4);
\draw (1,5) -- (3,5);
\edge{0}{1}{2};
\edge{1}{3}{2};
\edge{2}{3}{1};
\node [right] at (3,1) {$5$};\node [right] at (3,2) {$4$};
\node [right] at (3,3) {$3$};\node [right] at (3,4) {$2$};
\node [right] at (3,5) {$1$};
\bdot{3}{1};\wdot{3}{2};\wdot{3}{3};\bdot{3}{4};\bdot{3}{5};
\bdot{2}{2}; \wdot{1.5}{3};
 \end{tikzpicture}
 \caption{Bridge graph $G$ for\\
  $f = [5,6,7,4,8] \in \Bd(3,5)$.}
 \end{subfigure}
 \hfill
 \begin{subfigure}[t]{0.6 \textwidth}
 \centering
 \begin{tikzpicture}[scale = 0.6]
 \draw[white] (0,-1) -- (1,-1);
 \btm{0}{0};\btm{0}{1};\btm{0}{2};\crsg{0}{3};
 \btm{1}{0};\btm{1}{1};\crsg{1}{2};\tp{1}{3};
 \btm{2}{0};\brd{2}{1};\tp{2}{2};\tp{2}{3};
 \crsg{3}{0};\tp{3}{1};\tp{3}{2};\tp{3}{3};
 \btm{4}{0};\btm{4}{1};\brd{4}{2};\tp{4}{3};
 \btm{5}{0};\btm{5}{1};\btm{5}{2};\crsg{5}{3};
 \btm{6}{0};\btm{6}{1};\brd{6}{2};\tp{6}{3};
 \draw[dotted, very thick] (2,-0.5) -- (2,4.5);
  \draw[dotted, thick] (4,-0.5) -- (4,4.5);
   \draw[dotted, thick] (5,-0.5) -- (5,4.5);
 \node[left] at (0,0) {$5$}; \node[left] at (0,1) {$4$}; \node[left] at (0,2) {$3$};
 \node[left] at (0,3) {$2$}; \node[left] at (0,4) {$1$};
 \node[right] at (7,0) {$5$}; \node[right] at (7,1) {$4$}; \node[right] at (7,2) {$3$};
 \node[right] at (7,3) {$2$}; \node[right] at (7,4) {$1$};
 \end{tikzpicture}
 \captionsetup{singlelinecheck=off}
\caption[.]{$f$ corresponds to \begin{math}\langle u, w \rangle_3\end{math} where \begin{math}u = 32154,\end{math} \begin{math}w = 53214.\end{math}
 We have a bridge diagram for $G$, \begin{displaymath}\mf{w} = s_2 \bm{s_1}s_2\bm{s_4}s_3\bm{s_2}\bm{s_1}.\end{displaymath}  Letters in the PDS for $u$ in $\mf{w}$ are bolded.}
 \end{subfigure}
 \begin{subfigure}[t]{0.35 \textwidth}
 \centering
 \begin{tikzpicture}[scale = 0.6]
\draw(3,6) -- (3,-1); 
\draw (2,4) -- (3,4);
\draw (0,0) -- (3,0); 
\draw (0,2) -- (3,2);
\draw (2,3) -- (3,3);
\draw (1,5) -- (3,5);
\draw (2,1) -- (3,1);
\edge{1}{2}{3};
\edge{0}{0}{2};
\edge{2}{2}{1};
\bdot{2}{4}; \wdot{1.5}{2}; \bdot{2}{1};
\node [right] at (3,0) {$6$};\node [right] at (3,1) {$5$};
\node [right] at (3,2) {$4$};\node [right] at (3,3) {$3$};
\node [right] at (3,4) {$2$}; \node [right] at (3,5) {$1$}; 
\wdot{3}{0};\wdot{3}{1};\wdot{3}{2};\bdot{3}{3};\wdot{3}{4};\bdot{3}{5};
 \end{tikzpicture}
 \caption{Adding a black lollipop\\
 gives a bridge graph $G'$ for\\
 \begin{math} f' = [6,2,7,9,5,10] \in \Bd(3,6)\end{math}}
 \end{subfigure}
 \hfill
 \begin{subfigure}[t]{0.6 \textwidth}
 \centering
 \begin{tikzpicture}[scale =  0.6]
 \draw [white] (0,-1) -- (1,-1);
 \draw [dashed, rounded corners] (0,0) -- (2.2,0) -- (2.8,1) 
 -- (4.2,1) -- (4.8,2) -- (7.2,2) 
 -- (7.8,3) -- (9.2,3) -- (9.8,4)
 -- (11,4);
 \btm{0}{1};\btm{0}{2};\btm{0}{3};\crsg{0}{4};
  \btm{1}{1};\btm{1}{2};\crsg{1}{3};\tp{1}{4};
 \tcrsg{2}{0}; \tp{2}{1};\tp{2}{2};\tp{2}{3};\tp{2}{4};
 \btm{3}{0}; \brd{3}{2}; \tp{3}{3}; \tp{3}{4};
 \btm{4}{0};\tcrsg{4}{1};\tp{4}{2};\tp{4}{3};\tp{4}{4};
 \crsg{5}{0};\tp{5}{2};\tp{5}{3};\tp{5}{4};
 \btm{6}{0};\btm{6}{1};\brd{6}{3};\tp{6}{4};
 \btm{7}{0};\btm{7}{1};\tcrsg{7}{2};\tp{7}{3}; \tp{7}{4};
 \btm{8}{0};\btm{8}{1};\btm{8}{2};\crsg{8}{4};
 \btm{9}{0};\btm{9}{1};\btm{9}{2};\tcrsg{9}{3};\tp{9}{4};
 \btm{10}{0};\btm{10}{1};\brd{10}{2};\tp{10}{4};
 \node [left] at (0,0) {$6$}; \node [left] at (0,1) {$5$}; \node [left] at (0,2) {$4$}; \node [left] at (0,3) {$3$};
  \node [left] at (0,4) {$2$}; \node [left] at (0,5) {$1$};
   \node [right] at (11,0) {$6$}; \node [right] at (11,1) {$5$}; \node [right] at (11,2) {$4$}; \node [right] at (11,3) {$3$};
  \node [right] at (11,4) {$2$}; \node [right] at (11,5) {$1$};
\draw[dotted, very thick] (3,-0.5) -- (3,5.5);
\draw[dotted, thick] (6,-0.5) -- (6,5.5);
\draw[dotted, thick] (8,-0.5) -- (8,5.5);
 \end{tikzpicture}
\caption{$f'$ corresponds to $\langle u', w' \rangle_3$, where $u' = 431652$,\\
$w' = 643152$.  We build a bridge diagram for $G'$ by adding a wire (dashed) with right endpoint at position $2$.
The result corresponds to $\mf{w}' = s_3\bm{s_2}\bm{s_1}\bm{s_3}s_2\bm{s_5}\bm{s_4}s_3\bm{s_5}\bm{s_2}\bm{s_1}$.}
 \end{subfigure}
 \caption{Adding a lollipop to the bridge graph $G$ gives a bridge graph $G'$.  We construct a bridge diagram for $G'$ by adding a new wire (dashed) to a bridge diagram for $G$.  The portion of each bridge diagram to the right of the thick vertical line is the restricted part. The thin vertical lines divide the restricted part of each diagram into segments, as in the proof of Lemma \ref{addfp}.}
  \label{addlolly}
 \end{figure}
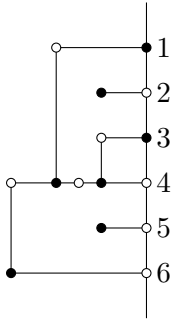
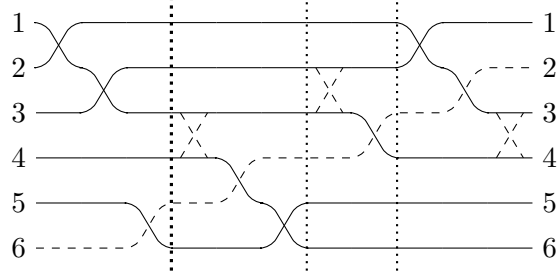
 
\begin{lem}
\label{nofp}
Let $G$ be a bridge graph which has no lollipops, corresponding to a positroid variety $\Pi_G$ in $\Gr(k,n)$. Then we can construct a valid bridge diagram for some $\mf{u} \preceq \mf{w}$ which corresponds to the bridge graph $G$.  
\end{lem}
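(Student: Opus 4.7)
The plan is to prove Lemma \ref{nofp} by induction on the number of bridges $d$, using Lemma \ref{addfp} to reinsert any lollipops created when removing a bridge. The base case $d=0$ is immediate: a bridge graph with no bridges and no lollipops must be empty ($n=0$), and the empty bridge diagram is trivially valid.

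The key structural observation for the inductive step is that if $G$ has no lollipops, then the last bridge $(a_d, b_d)$ must satisfy $b_d = a_d + 1$. Indeed, the vertices in $[a_d + 1, b_d - 1]$ were required to be lollipops in $G_{d-1}$ when $(a_d, b_d)$ was added; since no subsequent bridge touches these vertices, they remain lollipops in $G$, and the no-lollipop hypothesis forces the interval to be empty. Moreover, by Proposition \ref{canadd}, each bridge decreases the length of the affine permutation by one, so if $\langle u, w_{d-1}\rangle_k$ is the anti-Grassmannian representative of $\Pi_{G_{d-1}}$, then $w = s_{a_d}w_{d-1}$ with $\ell(w) = \ell(w_{d-1})+1$; in particular, prepending $s_{a_d}$ to any reduced word for $w_{d-1}$ gives a reduced word for $w$. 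Note too that $u$ is the same for $G$ and $G_{d-1}$, since both graphs arise from the same initial lollipop set $u([k])$.

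Next, I would remove the last bridge to obtain $G_{d-1}$, which may acquire up to two new lollipops at $a_d$ and/or $b_d$ (exactly those endpoints with no other bridges in $G$). Let $G''$ be $G_{d-1}$ with these new lollipops deleted and its boundary vertices renumbered. Then $G''$ is a no-lollipop bridge graph with $d-1$ bridges on fewer boundary vertices, so by the inductive hypothesis it admits a valid bridge diagram. Applying Lemma \ref{addfp} once or twice reinserts the removed lollipops and yields a valid bridge diagram $\mf{u}_{d-1} \preceq \mf{w}_{d-1}$ for $G_{d-1}$. I then set $\mf{w} := s_{a_d}\mf{w}_{d-1}$ and take $\mf{u} := \mf{u}_{d-1}$ as a subword of $\mf{w}$; geometrically, this inserts a new rightmost bridge in the diagram between the wires labeled $a_d$ and $b_d = a_d + 1$, which sit at adjacent vertical positions at the right boundary.

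Validity then reduces to three routine checks: that $\mf{w}$ is reduced (done above), that $\mf{u}$ is still a PDS for $u$ in $\mf{w}$ (the greedy-from-the-right construction processes $\mf{w}_{d-1}$ identically as before and terminates with the running permutation equal to the identity before reaching the newly prepended $s_{a_d}$), and that the sequence of bridges in $\mf{u}\preceq\mf{w}$ is precisely $(a_1,b_1),\ldots,(a_d,b_d)$, which recovers $G$. The main obstacle I anticipate is the bookkeeping around lollipop removal and reinsertion: one must carefully track the renumbering of boundary vertices between $G$, $G_{d-1}$, and $G''$, and verify that the anti-Grassmannian representative for $G''$ is compatible with that for $G_{d-1}$, so that the inductive hypothesis and Lemma \ref{addfp} combine cleanly before the final prepending step.
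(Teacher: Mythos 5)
Your proposal is correct and follows essentially the same route as the paper: induct on the number of bridges, observe the last bridge must be an $(a,a+1)$-bridge, strip the (at most two) lollipops created by its removal, apply the inductive hypothesis and Lemma \ref{addfp} to rebuild a valid diagram for the smaller graph, and then reinsert the last bridge at the right boundary of the diagram, i.e.\ prepend $s_{a_d}$ to the word. The only real difference is local: where the paper's Claim \ref{red} verifies $w_{d-1}^{-1}(a) < w_{d-1}^{-1}(a+1)$ by a direct case analysis on $f_{G_{d-1}}$ using the anti-Grassmannian property of $u$, you deduce $\ell(s_{a_d}w_{d-1}) = \ell(w_{d-1})+1$ from the dimension count in Proposition \ref{canadd} together with uniqueness of the factorization $f = t_{\nu}\sigma$ and of the anti-Grassmannian representative --- an equally valid justification of the same step.
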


\begin{proof}

We first establish the following claim.

\begin{claim}\label{red}  Let \begin{math}\mf{u'}\preceq \mf{w'}\end{math} be a valid bridge diagram corresponding to a bridge graph $G'$, where
\begin{equation} \Pi_{G'}=\Pi_{\langle u',w'\rangle_k}.\end{equation}  Suppose that adding an $(a,a+1)$ bridge to $G'$ yields a reduced graph $G$.  Then \begin{math}w'^{-1}(a) < w'^{-1}(a+1),\end{math} so adding an $(a,a+1)$ bridge adjacent to the right boundary of the bridge diagram \begin{math}\mf{u}' \preceq \mf{w}'\end{math} yields a valid bridge diagram for $G$.
\end{claim}

\begin{proof}
To prove the claim, let $f'$ be the bounded affine permutation corresponding to $G'$.  Then 
\begin{equation}f '= t_{u'([k])}u'w'^{-1}.\end{equation}
Since $G$ is a reduced bridge graph, we have \begin{math}f'(a) > f'(a+1).\end{math}  So one of the following holds: 
\begin{enumerate}
\item $w'^{-1}(a) \in [k]$ and $w'^{-1}(a+1) \in [k+1,n].$
\item $w'^{-1}(a), w^{-1}(a+1) \in [k]$ and $u'w'^{-1}(a) > u'w'^{-1}(a+1).$
\item $w'^{-1}(a),w'^{-1}(a+1)\in [k+1,n]$ and $u'w'^{-1}(a) > u'w'^{-1}(a+1).$
\end{enumerate}
In the first case, the fact that \begin{math}w'^{-1}(a) < w'^{-1}(a+1)\end{math} is obvious.  In the others, it follows from the fact that $u'$ is anti-Grassmannian.  So \begin{math}w'^{-1}(a) < w'^{-1}(a+1)\end{math}, and wires $a$ and $a+1$ never cross in the the diagram for $\mf{w'}$.  Hence adding the desired bridge gives a reduced diagram, and the claim follows.  
\end{proof}

Let $G$ be as above, and let \begin{math}d = \dim(\Pi_G).\end{math}  We proceed by induction on $d$.
There is only one lollipop-free bridge graph with $d=1$, corresponding to the big cell in $\mathbb{P}^2$.  The PDS \begin{math}\mf{1} \preceq \mf{s_1}\end{math} is a bridge diagram for $G$, and the base case is complete.

Say the result holds for every lollipop-free bridge graph with $d' < d$.  (Note that there are finitely many such bridge graphs.)  Let $G'$ be the graph obtained from $G$ by removing the last bridge.  Note that since $G$ contains no lollipops, the last bridge must be of the form $(a,a+1)$ for some $a$.  By Claim \ref{red}, it suffices to show that we can construct a valid bridge diagram for $G'$.
If neither $a$ nor $a+1$ is a lollipop in $G'$, then $G'$ does not have any lollipops, and this follows by inductive assumption.  

Next, suppose $a$ or $a+1$ is a lollipop in $G'$, and let $\bar{G}$ be the bridge graph obtained from $G'$ by deleting all lollipops.  By inductive assumption, we can construct a bridge diagram \begin{math}\bar{\mf{u}} \preceq \bar{\mf{w}}\end{math} corresponding to $\bar{G}$.  By lemma \ref{addfp}, we may successively add wires to the diagram as needed, to  produce a valid bridge diagram \begin{math}\mf{u}' \preceq \mf{w}'\end{math} for $G'$.  

In each case, Claim \ref{red} shows that we can insert an $(a,a+1)$ bridge on the far right side of the diagram \begin{math}\mf{u'} \preceq \mf{w'}\end{math}, and obtain the desired diagram for \begin{math}\mf{u} \preceq \mf{w}.\end{math}  This completes the proof.

\end{proof}

We can now prove the reverse direction of Theorem \ref{main}, which establishes the result.

\begin{prop}
Every parametrization arising from a bridge graph agrees with some projected Deodhar parametrization.
\end{prop}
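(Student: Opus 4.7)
The plan is to assemble Lemmas \ref{nofp} and \ref{addfp} into a short induction on the number of lollipops. Given an arbitrary bridge graph $G$ with $\Pi_G \subseteq \Gr(k,n)$, I first separate the lollipops from the ``active'' bridges. Let $\bar{G}$ be the lollipop-free bridge graph obtained from $G$ by deleting every lollipop (and reducing $k$ and $n$ accordingly for each white and black lollipop deleted). Since a lollipop contributes a fixed point to the bounded affine permutation and no edge of positive-dimensional weight, the sequence of bridges building $G$ is, after renumbering boundary vertices, the same sequence that builds $\bar{G}$.

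By Lemma \ref{nofp}, there is a valid bridge diagram $\bar{\mf{u}} \preceq \bar{\mf{w}}$ whose restricted part produces $\bar{G}$. I then apply Lemma \ref{addfp} once for each lollipop of $G$, inserting the lollipops in their original cyclic positions on the boundary. Each application preserves validity and produces a bridge diagram for the intermediate graph; after all lollipops have been restored, I obtain a valid bridge diagram $\mf{u} \preceq \mf{w}$ for $G$, with $u$ anti-Grassmannian and $\mf{u}$ a PDS of $\mf{w}$. Crucially, both Lemma \ref{nofp} and the construction in Lemma \ref{addfp} preserve the sequence of bridges $(a_1,b_1),\ldots,(a_d,b_d)$ recorded in the restricted part of the diagram, up to the renumbering of boundary vertices induced by inserting lollipops.

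The final step is to compare parametrizations. By the opening discussion of Section \ref{mainproof}, the projected Deodhar parametrization attached to $\mf{u} \preceq \mf{w}$ is obtained by starting with the $k \times n$ matrix whose columns indexed by $u([k])$ form the identity, and multiplying on the right by the factors $x_{(a_r,b_r)}(\pm t_r)$ in order, where the sign is $(-1)^{|u([k]) \cap [a_r+1,b_r-1]|}$. This is exactly the recipe given for the parametrization from a bridge graph, with the same sequence of bridges and the same sign convention. Hence the two parametrizations send each point $(t_1,\ldots,t_d) \in (\complexes^{\times})^d$ to the same element of $\Gr(k,n)$.

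The substantive work has already been done in Lemma \ref{addfp}, so I expect no further obstacles here; the only thing to watch is that the boundary renumberings introduced by each application of Lemma \ref{addfp} are the same renumberings that relate $\bar{G}$ to $G$, which is built into the construction. With these in hand, the proposition follows immediately, completing the proof of Theorem \ref{main}.
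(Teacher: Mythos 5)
Your proposal is correct and follows the same route as the paper: strip all lollipops, invoke Lemma \ref{nofp} to get a valid bridge diagram for the lollipop-free graph, restore the lollipops one at a time via Lemma \ref{addfp}, and then conclude by the matrix comparison from the start of Section \ref{mainproof}. The only difference is that you spell out the final parametrization comparison and the boundary renumbering explicitly, which the paper leaves implicit.
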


\begin{proof}
Let $G$ be a bridge graph.  It is enough to show that we can build a valid bridge diagram corresponding to $G$.  Let $G'$ be the graph obtained by removing all lollipops from $G$.  Then we can build a valid bridge diagram for $G'$, by Lemma \ref{nofp}.  We then use Lemma \ref{addfp} to add lollipops as needed, and obtain a valid bridge diagram for $G$.\end{proof}

\section{Local moves for bridge diagrams}
\label{isotopy}

Let \begin{math}u \leq w \in S_n,\end{math} let $\mf{w}$ be a reduced word for $w$, and let \begin{math}\mf{u} \preceq \mf{w}\end{math} be the PDS for $u$ in $\mf{w}$. Performing a Coxeter move on $\mf{w}$ yields a new word $\mf{w}'$ for $w$.  Let $\mf{u}'$ be the unique PDS for $u$ in $\mf{w}'$.  In this section, we show how to construct the bridge diagram \begin{math}\mf{u}' \preceq \mf{w}'\end{math} by performing a local transformation on the diagram \begin{math}\mf{u} \preceq \mf{w}.\end{math}  We call these local transformations \emph{PDS moves}.  Rietsch exhibits complete sets of PDS moves for all finite Weyl groups in \citep{Rie08}, without using the language of bridge diagrams.  

Since any reduced word for $w$ can be transformed into any other by applying Coxeter moves, any bridge diagram for a PDS of $u \leq w$ can be transformed into any other using PDS moves.  To perform a PDS move on the diagram \begin{math}\mf{u} \preceq \mf{w},\end{math} we first perform the desired Coxeter move on the diagram for $\mf{w}$.  We then choose some of the affected  crossings to be bridges in our new diagram, as described below. 

 In the case of a commutation move \begin{math}s_is_j = s_js_i,\end{math} the generator $s_i$ (respectively, $s_j$) corresponds to a bridge in \begin{math}\mf{u}' \preceq \mf{w}'\end{math} if and only if the same is true in \begin{math}\mf{u} \preceq \mf{w}.\end{math} For braid moves, the situation depends on the configuration of bridges in $\mf{w}$. We summarize the situation in Table \ref{moves} below, illustrated in Figure \ref{legal}.  (The terms ``legal" and ``illegal" will be explained in the next section.)  In each case, the generators contained in the PDS are bolded, while bridges are in ordinary type.

\begin{table}[ht]
\centering
\caption{Braid moves for PDS's.  Factors in the PDS are bolded, bridges are non-bolded.}
\begin{tabular}{| l | l |}
\hline
\text{Legal Moves} & \text{ Illegal Moves}\\\hline
 \begin{math} s_{i+1}\bm{s_is_{i+1}} \leftrightarrow \bm{s_is_{i+1}}s_i\end{math} & \begin{math}s_{i+1}s_i\bm{s_{i+1}} \leftrightarrow s_i\bm{s_{i+1}}s_i\end{math}\\ 
 \begin{math} \bm{s_{i+1}s_{i}}s_{i+1} \leftrightarrow s_i\bm{s_{i+1}s_i}\end{math} & \begin{math} s_{i+1}\bm{s_i}s_{i+1} \leftrightarrow s_is_{i+1}\bm{s_i}\end{math}\\
 \begin{math} \bm{s_{i+1}s_is_{i+1}}  \leftrightarrow \bm{s_is_{i+1}s_i}\end{math} & \begin{math}s_{i+1}s_is_{i+1} \leftrightarrow s_is_{i+1}s_i\end{math}\\  \hline
\end{tabular}
\label{moves}
\end{table}

\begin{prop}
Let \begin{math}\mf{u} \preceq \mf{w},\end{math} where $\mf{u}$ is a PDS for $\mf{w}$.  Let $\mf{w}'$ be obtained from $\mf{w}$ by performing a braid move, and let \begin{math}\mf{u}' \preceq \mf{w}'\end{math} be the PDS for $u$ in $\mf{w}'$. Then performing the corresponding local move from Table \ref{moves} on the bridge diagram for \begin{math}\mf{u} \preceq \mf{w}\end{math} yields the bridge diagram for \begin{math}\mf{u}' \preceq\mf{w}'.\end{math} 
\end{prop}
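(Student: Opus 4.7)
The plan is to exploit the lex-first-from-the-right characterization of PDS's stated in the lemma earlier in the paper. That characterization translates into a right-to-left greedy procedure on $\mf{w}=s_{i_1}\cdots s_{i_m}$: maintain a residue $\tilde u$ initialized to $u$, scan positions from $m$ down to $1$, include $s_{i_j}$ in the PDS exactly when $s_{i_j}$ is a right descent of the current $\tilde u$, and on inclusion update $\tilde u \mapsto \tilde u\cdot s_{i_j}$. Since $\mf{w}$ and $\mf{w}'$ agree on every letter outside the Coxeter window, the two greedy runs make identical decisions at those positions. It therefore suffices to start both greedy runs from a common residue $u^*$ at the right edge of the window and check, for each row of Table \ref{moves}, that the in-window greedy on the left-hand word selects the bolded positions on the left, that the in-window greedy on the right-hand word selects the bolded positions on the right, and that the residue just past the window is the same in both cases, so the greedy continues to agree on the letters to the left of the window.

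The commutation case $s_ps_q \leftrightarrow s_qs_p$ with $|p-q|\geq 2$ is essentially immediate: the two simple reflections commute and act on disjoint indices, so each reflection's right-descent test is independent of whether the other has been absorbed into $\tilde u$, and a direct two-step trace of the greedy shows the PDS pattern and the post-window residue are preserved. For a braid move $s_is_{i+1}s_i \leftrightarrow s_{i+1}s_is_{i+1}$, I would reduce to the rank-two parabolic $P=\langle s_i,s_{i+1}\rangle\cong S_3$ by using the standard parabolic factorization $u^* = x\cdot p$, where $p\in P$ and $x$ has no right descent in $\{s_i,s_{i+1}\}$. A standard fact about parabolic factorizations shows that the set of right factors of $u^*$ lying in $P$ equals the left-weak-order interval $[e,p]_{(l)}\subseteq P$, so every right-descent test that the three-step in-window greedy performs depends on $u^*$ only through $p$. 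The analysis therefore collapses to a finite check indexed by $p\in P$.

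I would then enumerate the six possibilities $p\in\{e,\,s_i,\,s_{i+1},\,s_is_{i+1},\,s_{i+1}s_i,\,s_is_{i+1}s_i\}$, running the three-step greedy on both windows $s_is_{i+1}s_i$ and $s_{i+1}s_is_{i+1}$ in each case. These six values match bijectively the six rows of Table \ref{moves}: $p\in\{e,s_i,s_{i+1}\}$ give the three illegal rows (in which the PDS selects zero or one window letter), and $p\in\{s_is_{i+1},s_{i+1}s_i,s_is_{i+1}s_i\}$ give the three legal rows (two or three window letters). In every case the chosen in-window subword multiplies to $p$, so the post-window residue equals $x$ regardless of which word is used, and the bolded positions come out exactly as in the table. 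The main obstacle is bookkeeping: keeping the parabolic factorization of $u^*$ and the left-weak-order intervals of elements of $P$ straight, and using the identification of right descents of $u^*$ in $P$ with the simple reflections lying in $[e,p]_{(l)}$. Once this identification is in hand, each of the six cases reduces to a short direct trace of the greedy in $S_3$ that reads off exactly the bolded pattern in the corresponding row of the table.
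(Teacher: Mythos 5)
Your argument is correct, but it takes a genuinely different route from the paper's. The paper's proof starts from the candidate subexpression displayed in Table \ref{moves}, notes that the bolded letters multiply to the same permutation before and after the braid move (so the candidate is a subexpression for $u$ in $\mf{w}'$), and then verifies that it is positive distinguished diagrammatically via Remark \ref{nocross} --- each bridge must sit between two wires of the $\mf{u}$-diagram that never cross again --- handling the two illegal moves $s_{i+1}s_i\bm{s_{i+1}} \leftrightarrow s_i\bm{s_{i+1}}s_i$ and $s_{i+1}\bm{s_i}s_{i+1} \leftrightarrow s_is_{i+1}\bm{s_i}$ by a short wire-crossing argument and implicitly invoking uniqueness of the PDS; the exhaustiveness of the table's list of configurations is asserted by inspection. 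You instead compute the PDS in both words directly from the right-to-left greedy (descent-of-the-residue) characterization, observe that the two runs carry the same residue $u^*$ into the right edge of the braid window, and reduce the in-window behaviour to the rank-two parabolic $P=\langle s_i,s_{i+1}\rangle$ via the factorization $u^*=xp$ with $x$ a minimal coset representative, so that every descent test inside the window sees only $p\in S_3$ (the cleanest justification being $\ell(xq)=\ell(x)+\ell(q)$ for all $q\in P$, which also shows each residue met inside the window stays in $xP$); the six values of $p$ then reproduce exactly the six rows of the table, the selected window letters multiply to $p$, hence the residue emerging on the left of the window is $x$ for both words and the two runs agree on all letters further left. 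I traced the six cases you describe and they do come out as claimed, with $p\in\{e,s_i,s_{i+1}\}$ giving the illegal rows and the longer elements the legal ones. Your approach is purely Coxeter-theoretic, uses no bridge-diagram geometry, and has the side benefit of proving, rather than asserting, that the table's case list is complete; the paper's approach is shorter given the diagrammatic machinery it has already built and stays in the bridge-diagram language used throughout Section \ref{isotopy}.
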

\begin{proof}
 
We sketch a proof using bridge diagrams.  First, note that the table covers all possible cases.  We cannot, for example, have the configuration \begin{math}\bm{s_i}s_{i+1}\bm{s_i}\end{math} since a PDS must be reduced.

We now verify that each move yields the desired bridge diagram. In each case, the bolded generators give the same permutation before and after the move.  So the bolded generators, together with the generators from  $\mf{u}$ in the rest of the diagram, give a subexpression for $u$ in $\mf{w'}$.  We must show that this new subexpression is a PDS.  In the language of bridge diagrams, this is equivalent to saying that each bridge is inserted between two wires in the underlying diagram for $\mf{u}$ which never cross again.

This is clear by inspection, except perhaps for the two moves 
\begin{equation} s_{i+1}s_i\bm{s_{i+1}} = s_i\bm{s_{i+1}}s_i\end{equation}
\begin{equation}s_{i+1}\bm{s_i}s_{i+1}=s_is_{i+1}\bm{s_i},\end{equation}
illustrated in Figure \ref{l1d} and Figure \ref{l1e}.  We will show that the claim holds for the former case; the argument in the later case is similar.

Consider the pair of diagrams in Figure \ref{l1d}.  Suppose the diagram at left represents a piece of a bridge diagram $\mf{u} \preceq \mf{w}$.  Since the subexpression $\mf{u} \preceq \mf{w}$ is a PDS, wires $a_1$ and $a_2$ never cross after the location of the braid move, and similarly for $a_2$ and $a_3$.  But then $a_1$ and $a_3$ cannot cross after the location of the braid move.  Hence if we replace the portion of $\mf{u} \preceq \mf{w}$ corresponding to the diagram on the left side of Figure \ref{l1d} with the diagram on the right, then each bridge is inserted between pairs of wires which never cross again, and we have a PDS.  The proof in the other direction is clear.
\end{proof}

\begin{figure}[ht]
 \begin{subfigure}[b]{\textwidth}
 \renewcommand{\thesubfigure}{\arabic{subfigure}}
 \centering
 \begin{subfigure}[b]{0.4 \textwidth}
  \renewcommand{\thesubfigure}{\alph{subfigure}}
 \centering
 \begin{tikzpicture}[scale = 0.6]
 \crsg{0}{0};\tp{0}{1};
 \btm{1}{0};\crsg{1}{1};
 \brd{2}{0};\tp{2}{1};
  \node[right] at (3,0) {$a_3$}; 
 \node[right] at (3,1) {$a_2$}; 
 \node[right] at (3,2) {$a_1$};
\begin{scope}[xshift = 4.5 cm]
 \btm{0}{0};\brd{0}{1};
 \crsg{1}{0};\tp{1}{1};
 \btm{2}{0};\crsg{2}{1};
 \node[right] at (3,0) {$a_3$}; 
 \node[right] at (3,1) {$a_2$}; 
 \node[right] at (3,2) {$a_1$};
 \end{scope}
 \end{tikzpicture}
 \caption{$s_{i+1}\bm{s_is_{i+1}} \leftrightarrow \bm{s_is_{i+1}}s_i$}
 \end{subfigure}
  \begin{subfigure}[b]{0.4 \textwidth}
 \renewcommand{\thesubfigure}{\alph{subfigure}}
  \centering
 \begin{tikzpicture}[scale = 0.6]
 \brd{0}{0};\tp{0}{1};
 \btm{1}{0};\crsg{1}{1};
 \crsg{2}{0};\tp{2}{1};
  \node[right] at (3,0) {$a_3$}; 
 \node[right] at (3,1) {$a_2$}; 
 \node[right] at (3,2) {$a_1$};
\begin{scope}[xshift = 4.5 cm]
 \btm{0}{0};\crsg{0}{1};
 \crsg{1}{0};\tp{1}{1};
 \btm{2}{0};\brd{2}{1};
  \node[right] at (3,0) {$a_3$}; 
 \node[right] at (3,1) {$a_2$}; 
 \node[right] at (3,2) {$a_1$};
 \end{scope}
 \end{tikzpicture}
 \caption{$\bm{s_{i+1}s_i}s_{i+1} \leftrightarrow s_i\bm{s_{i+1}s_i}$}
 \end{subfigure}
 
 \vspace{0.5 cm}
  \begin{subfigure}[b]{0.4 \textwidth}
 \renewcommand{\thesubfigure}{\alph{subfigure}}
  \centering
 \begin{tikzpicture}[scale = 0.6]
 \crsg{0}{0};\tp{0}{1};
 \btm{1}{0};\crsg{1}{1};
 \crsg{2}{0};\tp{2}{1};
  \node[right] at (3,0) {$a_3$}; 
 \node[right] at (3,1) {$a_2$}; 
 \node[right] at (3,2) {$a_1$};
\begin{scope}[xshift = 4.5 cm]
 \btm{0}{0};\crsg{0}{1};
 \crsg{1}{0};\tp{1}{1};
 \btm{2}{0};\crsg{2}{1};
  \node[right] at (3,0) {$a_3$}; 
 \node[right] at (3,1) {$a_2$}; 
 \node[right] at (3,2) {$a_1$};
 \end{scope}
 \end{tikzpicture}
 \caption{$\bm{s_{i+1}s_is_{i+1}} \leftrightarrow \bm{s_is_{i+1}s_i}$}
 \end{subfigure}
  \addtocounter{subfigure}{-3}
 \caption{Legal braid moves for bridge diagrams.  These moves preserve isotopy class.}
 \renewcommand{\thesubfigure}{\arabic{subfigure}}
 \end{subfigure}
 
 \begin{subfigure}[b]{\textwidth}
 \renewcommand{\thesubfigure}{\arabic{subfigure}}
 \centering
 \begin{subfigure}[b]{0.4 \textwidth}
   \addtocounter{subfigure}{3}
  \renewcommand{\thesubfigure}{\alph{subfigure}}
 \centering
 \begin{tikzpicture}[scale = 0.6]
\crsg{0}{0};\tp{0}{1};
 \btm{1}{0};\brd{1}{1};
 \brd{2}{0};\tp{2}{1};
  \node[right] at (3,0) {$a_3$}; 
 \node[right] at (3,1) {$a_2$}; 
 \node[right] at (3,2) {$a_1$};
\begin{scope}[xshift = 4.5 cm]
 \btm{0}{0};\brd{0}{1};
 \crsg{1}{0};\tp{1}{1};
 \btm{2}{0};\brd{2}{1};
  \node[right] at (3,0) {$a_3$}; 
 \node[right] at (3,1) {$a_2$}; 
 \node[right] at (3,2) {$a_1$};
 \end{scope}
 \end{tikzpicture}
 \addtocounter{subfigure}{-1}
 \caption{\begin{math}s_{i+1}s_i\bm{s_{i+1}} \leftrightarrow s_i\bm{s_{i+1}}s_i\end{math}}
 \label{l1d}
 \end{subfigure}
 \begin{subfigure}[b]{0.4 \textwidth}
   \renewcommand{\thesubfigure}{\alph{subfigure}}
 \centering
 \begin{tikzpicture}[scale = 0.6]
 \brd{0}{0};\tp{0}{1};
 \btm{1}{0};\crsg{1}{1};
 \brd{2}{0};\tp{2}{1};
  \node[right] at (3,0) {$a_3$}; 
 \node[right] at (3,1) {$a_2$}; 
 \node[right] at (3,2) {$a_1$};
\begin{scope}[xshift = 4.5 cm]
 \btm{0}{0};\crsg{0}{1};
 \brd{1}{0};\tp{1}{1};
 \btm{2}{0};\brd{2}{1};
  \node[right] at (3,0) {$a_3$}; 
 \node[right] at (3,1) {$a_2$}; 
 \node[right] at (3,2) {$a_1$};
 \end{scope}
 \end{tikzpicture}
 \caption{\begin{math}s_{i+1}\bm{s_i}s_{i+1} \leftrightarrow s_is_{i+1}\bm{s_i}\end{math}}
  \label{l1e}
 \end{subfigure}
  \vspace{0.5 cm}
 
  \begin{subfigure}[b]{0.4 \textwidth}
    \renewcommand{\thesubfigure}{\alph{subfigure}}
  \centering
 \begin{tikzpicture}[scale = 0.6]
 \brd{0}{0};\tp{0}{1};
 \btm{1}{0};\brd{1}{1};
 \brd{2}{0};\tp{2}{1};
  \node[right] at (3,0) {$a_3$}; 
 \node[right] at (3,1) {$a_2$}; 
 \node[right] at (3,2) {$a_3$};
\begin{scope}[xshift = 4.5 cm]
 \btm{0}{0};\brd{0}{1};
 \brd{1}{0};\tp{1}{1};
 \btm{2}{0};\brd{2}{1};
  \node[right] at (3,0) {$a_3$}; 
 \node[right] at (3,1) {$a_2$}; 
 \node[right] at (3,2) {$a_3$};
 \end{scope}
 \end{tikzpicture}
 \caption{\begin{math}s_{i+1}s_is_{i+1} \leftrightarrow s_is_{i+1}s_i\end{math}}
 \end{subfigure}
  \addtocounter{subfigure}{-5}
 \caption{Illegal braid moves for bridge diagrams.  These moves change isotopy class.}
  \end{subfigure}
  \caption{Braid moves for PDS's}
 \label{legal}
 \end{figure}

\subsection{Isotopy classes of bridge diagrams}

Let $\mf{w}$ and $\mf{w'}$ be reduced words for $w$, and let \begin{math}\mf{u} \preceq \mf{w}\end{math} and \begin{math}\mf{u'} \preceq \mf{w'}\end{math} be the PDS's for $u$ in \begin{math}\mf{w}$ and $\mf{w'}\end{math} respectively.  We say the bridge diagrams \begin{math}\mf{u} \preceq \mf{w}\end{math} and \begin{math}\mf{u'} \preceq \mf{w'}\end{math} are \emph{isotopic} if they have the same sequence of bridges 
\begin{displaymath}(a_1,b_1),\ldots,(a_d,b_d).\end{displaymath}  
If $u$ is anti-Grassmannian, the valid bridge diagrams \begin{math}\mf{u} \preceq \mf{w}\end{math} and \begin{math}\mf{u'} \preceq \mf{w'}\end{math} correspond to isotopic bridge graphs.  

We call a PDS move \emph{legal} if it preserves isotopy class, and illegal otherwise.  Any commutation move is legal.  From Figure \ref{legal}, we see that a braid move is legal if and only if it involves at most one bridge; or equivalently, if and only if it involves as least two factors of the PDS.  We say two bridge diagrams for PDS's are \emph{move-equivalent} if we can transform one into the other by a sequence of legal moves.  We will show that any two valid isotopic bridge diagrams are move equivalent. Thus the legal moves define equivalence classes of valid bridge diagrams, and these equivalence classes are in bijection with bridge graphs. 

For \begin{math}x,y \in S_n,\end{math} and $\mf{y}$ a reduced word for $y$, let $\mf{x} \preceq \mf{y}$ be a bridge diagram representing the PDS for $x$ in $\mf{y}$.  Let $(a,b)$ be a bridge in $B$, and let \begin{math}a < c < b\end{math}.  Then we have either \begin{math}(a,b) \rightarrow (c \downarrow a)\end{math} or \begin{math}(a,b) \rightarrow (c \uparrow b).\end{math} We say $B$ is \emph{$k$-divided} if, for all such $a,b$ and $c$, we have 
\begin{equation}
\begin{cases} (c \downarrow a) & \text{if }x^{-1}(c) \leq k\\
(c \uparrow b) & \text{if }x^{-1}(c) > k
\end{cases}
\end{equation}

\begin{rmk} By Lemma \ref{escape}, any valid bridge diagram is $k$-divided.  \end{rmk}
 
\begin{prop}
\label{divided}
Let $u \leq w$.  Let \begin{math}B_1=\mf{u} \preceq \mf{w}\end{math} and \begin{math}B_2=\mf{u'} \preceq \mf{w'}\end{math} denote isotopic, $k$-divided bridge diagrams corresponding to PDS's for $u$ in reduced words of $w$.  Then $B_1$ and $B_2$ are move-equivalent. 
\end{prop}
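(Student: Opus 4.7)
The plan is to proceed by induction on the number of bridges $d$ shared by $B_1$ and $B_2$. For the base case $d = 0$, both diagrams reduce to reduced wiring diagrams for $u$, and by Matsumoto's theorem the two underlying reduced words are related by a sequence of commutations and braid moves. Since every letter lies in the PDS, each braid move has the form $\bm{s_{i+1}s_is_{i+1}} \leftrightarrow \bm{s_is_{i+1}s_i}$, which is legal per Table \ref{moves}, and commutations are always legal. Hence $B_1$ and $B_2$ are move-equivalent.

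For the inductive step, let $(a_1, b_1)$ be the common leftmost bridge of $B_1$ and $B_2$. My plan is to first use legal moves within each diagram to normalize it so that the leftmost bridge lies at the leftmost position of the diagram (i.e., the free part becomes empty), and then to strip off this bridge and apply the inductive hypothesis to what remains. To normalize I would repeatedly try to slide the leftmost bridge one step leftward in the diagram: if the PDS crossing $s_j$ directly to its left satisfies $|i - j| > 1$, where $s_i$ is the bridge generator, then commutation (always legal) suffices; otherwise I would locate a three-letter window adjacent to the bridge forming a legal braid pattern of type (a) or (b) in Figure \ref{legal}. The $k$-divided condition will be the key ingredient in producing such a braid configuration whenever a non-commuting PDS crossing blocks the bridge, possibly after first re-ordering PDS generators in the free part (moves confined to the free part involve only PDS letters and so are unconditionally legal).

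Once normalization is accomplished, stripping off the leftmost bridge from both diagrams yields sub-diagrams $B_1', B_2'$ with bridge sequence $(a_2, b_2), \dots, (a_d, b_d)$.  These are valid bridge diagrams for the common pair $(u, w')$ with $w' = w(a_1, b_1)$, and they inherit the $k$-divided property from their parents.  Applying the inductive hypothesis produces a sequence of legal moves transforming $B_1'$ into $B_2'$; since those moves take place strictly to the right of the stripped leftmost bridge, they lift to legal moves between $B_1$ and $B_2$ once the bridge is reinserted, closing the induction.

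The hard part will be the normalization step.  Showing that the $k$-divided condition always suffices to produce a legal braid pattern whenever the leftmost bridge encounters a non-commuting PDS crossing requires a careful case analysis on the relative heights of the wires involved, in the spirit of Lemmas \ref{last}, \ref{escape}, \ref{2wires}, \ref{1wire}, and \ref{init}.  In particular, one must rule out the illegal braid configurations (d), (e), (f) of Figure \ref{legal}, in which two bridges would participate in the same braid triple; the validity (PDS) and $k$-divided hypotheses control the wire arrangements around a bridge precisely enough to forbid such configurations from being the only available option. Once the normalization lemma is in hand, the remaining bookkeeping---verifying that $B_1'$ and $B_2'$ satisfy the inductive hypothesis and that legal moves on them lift correctly---is routine.
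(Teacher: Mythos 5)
Your inductive frame (induct on the number of bridges and peel one off after a normalization) matches the paper's, but the normalization you build everything on is not merely hard — it is impossible in general. The leftmost crossing of a bridge diagram is the \emph{last} letter of the word $\mf{w}$, and by the greedy, rightmost-first characterization of PDS's that letter belongs to $\mf{u}$ exactly when it is a right descent of $u$. Since the possible last letters of reduced words for $w$ are precisely the right descents of $w$, the free part of a diagram for the pair $(u,w)$ can be empty only if $w$ has a right descent which is not a right descent of $u$; and no PDS move, legal or illegal, changes the pair $(u,w)$. Hence whenever every right descent of $w$ is also a right descent of $u$, no sequence of moves can bring the leftmost bridge to the left edge of the diagram. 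Such pairs exist with several bridges: for $w=s_2s_1s_3s_2=3412$, whose unique right descent is $s_2$, and $u=s_2$, every reduced word of $w$ ends in $s_2$, the PDS always absorbs that final letter, and the remaining three letters are all bridges, so the free part is permanently nonempty. The obstruction also occurs squarely within the proposition's scope: an anti-Grassmannian $u$ has every right descent except possibly $k$, so any valid pair with $w(k)<w(k+1)$ (for instance $u=1432\leq_1 w=3421$, which has two bridges) has this property, and valid diagrams are $k$-divided. This is why the ``hard part'' you defer cannot be rescued by the $k$-divided hypothesis: pushing the bridge past a non-commuting PDS crossing requires a third adjacent letter completing a legal braid triple from Table \ref{moves}, and when no reduced word of $w$ ever ends in a non-descent of $u$ there is simply nothing available to complete.

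The paper sidesteps this by normalizing at the opposite end. It cuts each $B_i$ at its \emph{rightmost} bridge; the piece $B_i^0$ from that bridge to the right edge of the diagram contains exactly one bridge, so every move performed inside it is automatically legal and the base case (any two isotopic diagrams with at most one bridge are move-equivalent) applies. The $k$-divided hypothesis is used exactly once: it guarantees that every inversion of the permutation $u_*$ underlying a canonical one-bridge diagram $B^0=\mf{u}_*\preceq\mf{w}_*$ is an inversion of $\bar{u}$ and of $\bar{u}'$, i.e.\ $u_*\leq_{(r)}\bar{u},\bar{u}'$, so each $B_i^0$ can be traded for an isotopic diagram whose rightmost portion is a common copy of $B^0$. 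Stripping that common tail leaves isotopic, $k$-divided diagrams with $d-1$ bridges, and the induction closes. Any repair of your left-to-right plan would have to replace ``empty free part'' by ``common segment containing the leftmost bridge,'' which is no longer a bridge-sliding argument but essentially the paper's canonical-form argument performed at the other end of the diagram.
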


\begin{proof}

We induce on the number of bridges in the diagrams.  For the base case, note that any move involving at most one bridge is legal.  Hence, any two isotopic bridge diagrams with at most one bridge are move equivalent.  Assume the claim holds for diagrams with at most $d-1$ bridges, and suppose $B_1$ and $B_2$ each have $d$ bridges.  

Suppose the rightmost bridge of each $B_i$ is a bridge $(a,b)$, where $a < b$. For $i=1,2$, let $B_i^0$ be the portion of $B_i$ which extends from the rightmost bridge to the right edge of the diagram.  
Then we have
\begin{equation} B_1^0 = \bar{\mf{u}} \preceq \bar{\mf{w}}\end{equation}
\begin{equation} B_2^0 = {\mf{\bar{u}'}} \preceq {\mf{\bar{w}'}} \end{equation}
for some \begin{math}\bar{u},\bar{u}',\bar{w},\bar{w}' \in S_n,\end{math} and each $B_i^0$ is a bridge diagram for a PDS.
Next, we construct a reduced bridge diagram $B^0$ with a single bridge $(a,b)$ adjacent to its left edge, which satisfies all of the following for each \begin{math}c \in [n]\end{math}:
\begin{enumerate}
\item If $c < a $ or $c > b$, then wire $c$ runs straight from the left endpoint labeled $c$ to the right, and does not cross any other wires.
\item If \begin{math}a < c < b,\end{math} we have 
\begin{equation}
\begin{cases} (a,b) \rightarrow (c \downarrow a) & u^{-1}(c) \leq k\\
(a,b) \rightarrow (c \uparrow b) & u^{-1}(c) > k
\end{cases}
\end{equation}
\item If $a < c,c' < b$ and either we have
\begin{displaymath} u^{-1}(c),u^{-1}(c') \leq k\end{displaymath} 
or we have
\begin{displaymath} u^{-1}(c),u^{-1}(c') > k\end{displaymath}
then wires $c$ and $c'$ do not cross.
\end{enumerate}
The fact that such a diagram exists follows easily from the proof of Lemma \ref{addfp}, and we have 
\begin{equation} B^0 = \mf{u}_* \preceq \mf{w}_*\end{equation} 
for some \begin{math}u_*,w_* \in S_n.\end{math}   See Figure \ref{kdiv}.

Note that every inversion of $u_*$ is also an inversion of $\bar{u}$ and $\bar{u}'$, since $B_1$ and $B_2$ are $k$-divided.  By the usual criterion, it follows that \begin{math}u_* \leq_{(r)} \bar{u},\bar{u}'\end{math}.  Hence we can add additional crossings to the diagram $\mf{u_*}$ on the left to build a reduced diagram $\bar{\mf{u}}_*$ for $\bar{u}$, and to build a reduced diagram $\mf{\bar{u}'_*}$ for $\bar{u}'$.

Since the reduced diagrams $B_1^0$ and $B_2^0$ each have only one bridge $(a,b)$, it follows that wires $a$ and $b$ do not cross in the wiring diagrams $\bar{\mf{u}}$ and $\mf{\bar{u}'}$, and thus do not cross in the diagrams $\bar{\mf{u}}_*$ and $\mf{\bar{u}'_*}$. Hence we can add an $(a,b)$ bridge to each of $\bar{\mf{u}}_*$ and $\mf{\bar{u}'_*}$, with the bridge just to the left left of the copy of $\mf{u_*}$ in each diagram.  The result is a pair of reduced diagrams $C_1^0$ and $C_2^0$ for PDS's, which are isotopic to $B_1^0$ and $B_2^0$ respectively, and whose rightmost entries form a copy of $B^0$.

By the base case, $C_i^0$ is move-equivalent to $B_i^0$ for $i = 1,2$.  So we can transform $B_1$, $B_2$ into bridge diagrams $B_1'$, $B_2'$, whose rightmost entries form a copy $B^0$.  Let $B_i^*$ be the part of $B_i'$ to the left of the copy of $B^0$, for $i = 1,2$.  Then $B_1^*$ and $B_2^*$ are isotopic, $k$-divided bridge diagrams with $d-1$ bridges, and are hence move-equivalent by induction.  Thus $B_1'$ and $B_2'$ are move equivalent, and hence so are $B_1$ and $B_2$.  
\end{proof}

\begin{cor}
Any two isotopic valid bridge diagrams are move equivalent.  
\end{cor}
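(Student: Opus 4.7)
The plan is to observe that the Corollary is essentially immediate from Proposition \ref{divided} combined with the Remark immediately preceding it. The Remark states that every valid bridge diagram is $k$-divided, this being a direct consequence of Lemma \ref{escape}. So the strategy reduces to verifying that the hypotheses of Proposition \ref{divided} are satisfied whenever we start with two isotopic valid bridge diagrams.

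Concretely, I would proceed as follows. Let $B_1 = \mf{u} \preceq \mf{w}$ and $B_2 = \mf{u}' \preceq \mf{w}'$ be two isotopic valid bridge diagrams. By definition of ``valid,'' both $\mf{u}$ and $\mf{u}'$ represent PDS's, and the underlying permutation $u$ (common to both, since validity fixes $u$ as anti-Grassmannian and isotopy fixes the sequence of bridges, hence determines $u([k])$ and the underlying permutation consistently) is anti-Grassmannian. Since isotopy means $B_1$ and $B_2$ share the same sequence of bridges $(a_1,b_1), \ldots, (a_d, b_d)$, and since the relation $f_{u,w} = t_{u([k])}(a_1,b_1)\cdots(a_d,b_d)$ from \eqref{rightperm} determines $w$ from $u$ and the bridge sequence up to the class $\langle u, w\rangle_k$, the two diagrams represent PDS's for the same pair $u \leq w$ (using different reduced words $\mf{w}, \mf{w}'$ for $w$).

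Next, I would invoke the Remark: by Lemma \ref{escape}, both $B_1$ and $B_2$ are $k$-divided. Thus $B_1$ and $B_2$ are isotopic, $k$-divided bridge diagrams for PDS's of the same $u \leq w$, exactly the hypotheses of Proposition \ref{divided}. Applying that proposition yields that $B_1$ and $B_2$ are move-equivalent, completing the proof.

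There is really no obstacle here; the technical work has all been absorbed into Proposition \ref{divided} (which handles the inductive construction of a common ``skeleton'' $B^0$ for the rightmost bridge and reduces to a smaller case) and into Lemma \ref{escape} (which gives the $k$-divided property for valid diagrams). The only small thing to be careful about is making sure that ``isotopic'' plus ``valid'' forces the two diagrams to correspond to the same pair $(u,w)$, so that Proposition \ref{divided} is literally applicable; this is immediate from the definition of isotopy and the reconstruction \eqref{rightperm} of $f_{u,w}$ from the bridge sequence together with $u([k])$.
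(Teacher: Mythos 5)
Your proposal is correct and matches the paper's intended argument: the corollary is an immediate consequence of the remark that every valid bridge diagram is $k$-divided (via Lemma \ref{escape}) together with Proposition \ref{divided}. Your extra care about the two diagrams sharing the same pair $u \leq w$ is harmless but unnecessary, since the paper's definition of isotopic diagrams already fixes $u$ and $w$ and only allows the reduced words $\mf{w}$, $\mf{w}'$ to vary.
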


 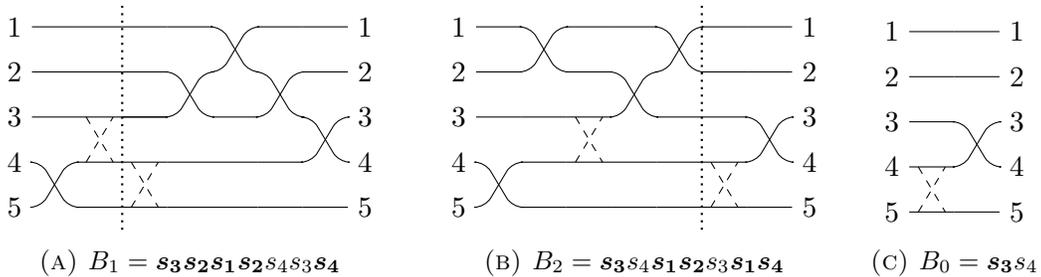
\begin{figure}[ht]
 \centering
 \begin{subfigure}[b]{0.35 \textwidth}
 \centering
 \begin{tikzpicture}[scale = 0.6]
 \crsg{1}{0};\tp{1}{1};\tp{1}{2};\tp{1}{3};
 \btm{2}{0};\brd{2}{1};\tp{2}{2};\tp{2}{3};
 \brd{3}{0};\tp{3}{1};\tp{3}{1};\tp{3}{2};\tp{3}{3};
\btm{4}{0};\btm{4}{1};\crsg{4}{2};\tp{4}{3};
\btm{5}{0};\btm{5}{1};\btm{5}{2};\crsg{5}{3};
\btm{6}{0};\btm{6}{1};\crsg{6}{2};\tp{6}{3};
\btm{7}{0};\crsg{7}{1};\tp{7}{2};\tp{7}{3};
\node[left] at (1,4) {1};\node[left] at (1,3) {2};\node[left] at (1,2) {3};\node[left] at (1,1) {4};\node[left] at (1,0) {5};
\node[right] at (8,4) {1};\node[right] at (8,3) {2};\node[right]  at (8,2) {3};\node[right]  at (8,1) {4};\node[right] at (8,0) {5};
\draw[dotted, thick] (3,-0.5) -- (3,4.5);
 \end{tikzpicture}
\caption{\begin{math}B_1 = \bm{s_3s_2s_1s_2}s_4s_3\bm{s_4}\end{math}}
 \end{subfigure}
 \begin{subfigure}[b]{0.35 \textwidth}
\centering
 \begin{tikzpicture}[scale = 0.6]
 \crsg{1}{0};\tp{1}{1};\tp{1}{2};\tp{1}{3};
 \btm{2}{0};\btm{2}{1};\btm{2}{2};\crsg{2}{3};
 \btm{3}{0};\brd{3}{1};\tp{3}{2};\tp{3}{3};
\btm{4}{0};\btm{4}{1};\crsg{4}{2};\tp{4}{3};
\btm{5}{0};\btm{5}{1};\btm{5}{2};\crsg{5}{3};
\brd{6}{0};\tp{6}{1};\tp{6}{2};\tp{6}{3};
\btm{7}{0};\crsg{7}{1};\tp{7}{2};\tp{7}{3};
\draw[dotted, thick] (6,-0.5) -- (6,4.5);
\node[left] at (1,4) {1};\node[left] at (1,3) {2};\node[left] at (1,2) {3};\node[left] at (1,1) {4};\node[left] at (1,0) {5};
\node[right] at (8,4) {1};\node[right] at (8,3) {2};\node[right]  at (8,2) {3};\node[right]  at (8,1) {4};\node[right] at (8,0) {5};
 \end{tikzpicture}
\caption{\begin{math}B_2 = \bm{s_3}s_4\bm{s_1s_2}s_3\bm{s_1s_4}\end{math}}
 \end{subfigure}
 \begin{subfigure}[b]{0.15 \textwidth}
\centering
 \begin{tikzpicture}[scale = 0.6]
 \brd{1}{0};\tp{1}{1};\tp{1}{2};\tp{1}{3};
 \btm{2}{0};\crsg{2}{1};\tp{2}{2};\tp{2}{3};
\node[left] at (1,4) {1};\node[left] at (1,3) {2};\node[left] at (1,2) {3};\node[left] at (1,1) {4};\node[left] at (1,0) {5};
\node[right] at (3,4) {1};\node[right] at (3,3) {2};\node[right]  at (3,2) {3};\node[right]  at (3,1) {4};\node[right] at (3,0) {5};
 \end{tikzpicture}
  \caption{$B_0 = \bm{s_3}s_4$}
 \end{subfigure}
 \caption{$B_1$ and $B_2$ are isotopic valid bridge diagrams with $u=42153$ and $w = 42531$.  For $i=1,2$, the portion of $B_i$ to the right of the dashed line is $B_i^0$.}
 \label{kdiv}
 \end{figure}

\clearpage

\bibliographystyle{abbrvnat}
\bibliography{combo}
\label{sec:biblio}

\end{document}